\documentclass[11pt,a4paper]{amsart}
\usepackage{mathmode}
\usepackage{bm}
\usepackage{yhmath}
\usepackage{tikz,tikz-cd}
\usepackage[margin=1.3in]{geometry}
\usepackage{graphicx,array,subfigure}
\usepackage{stackengine,scalerel}
\usepackage{amsthm,enumitem}
\usepackage{multirow}
\usepackage{svg}
\graphicspath{ {./figures/} }
\def\X#1{E_{#1}}   

\def\cups{\cup\cdots\cup}
\def\Z{\mathbb{Z}}

\def\R{\mathbb{R}}

\def\O{\mathcal{O}}
\def\Q{\mathbb{Q}}
\def\R{\mathbb{R}}
\def\slp{\text{slp}^\ast}

\def\slope{\text{slp}}

\def\Aut{\mathrm{Aut}}

\def\Zx{\widehat{\Z}^{\times}}
\def\GL{\mathrm{GL}}
\def\SL{\mathrm{SL}}
\tikzset{%
  symbol/.style={
    draw=none,
    every to/.append style={
      edge node={node [sloped, allow upside down, auto=false]{$#1$}}
    },
  },
}

\def\ker{\mathrm{ker}}
\def\hatsigma{\widehat{\sigma}}

\def\hatpi{\widehat{\pi_1}}         
\def\piorb{\pi_1^{\text{orb}}}      
\def\hatpiorb{\widehat{\piorb}}     
\def\pihat{\hatpi}
\def\hatH{\widehat{H_1}}

\def\lk{\mathrm{lk}}
\def\H{\mathbf{H}}

\title{Profinite rigidity for two-bridge links and 3-tangle Montesinos links}

\author{Tamunonye Cheetham-West}
\address{Yale University, 219 Prospect Street, New Haven, CT, U.S.A, 06511}
\email{tamcheethamwest24@gmail.com}

\author{Xiaoyu Xu}
\address{Beijing International Center for Mathematical Research, Peking University, Beijing, P.R.China, 100871}
\email{xuxiaoyu@stu.pku.edu.cn}

\date{\today}

\begin{document}

\maketitle

\begin{abstract}
For any two-bridge link or 3-tangle Montesinos link  $L\subset S^3$ (including knot), this paper proves that $\pi_1(S^3-L)$ is profinitely rigid among the fundamental groups of compact orientable 3-manifolds.
\end{abstract}

\newsavebox{\pullback}
\sbox\pullback{%
\begin{tikzpicture}%
\draw (0,0) -- (1ex,0ex);%
\draw (1ex,0ex) -- (1ex,1ex);%
\end{tikzpicture}}

\section{Introduction}
For a compact 3-manifold $M$ (possibly with boundary), finite quotients of the fundamental group $\pi_1(M)$ are useful for distinguishing $M$ from other 3-manifolds in practice.  This idea can be well illustrated by its application to   knot complements. For example, when $M$  the exterior of a non-trivial knot $K$ in $S^3$, it is known by \cite{Kup14} that this non-triviality can be certified by finding a non-abelian finite cover of $M$ whose degree depends only on the crossing number of $K$. According to the results of Whitten \cite{Whi74} and Gordon--Luecke \cite{GL89},  two prime knots $K_1$ and $K_2$ in $S^3$ with $\pi_1(S^3-K_1)\cong \pi_1(S^3-K_2)$ are equivalent up to isotopy and mirroring. A further question, stated by Boileau and Friedl \cite[Question 1.3]{BF20}, is whether a prime knot $K$ in $S^3$ is determined, up to isotopy and mirroring, by the set of finite quotients of $\pi_1(S^3-K)$. 

For a finitely generated group $\Gamma$, let $\mathcal{C}(\Gamma)$ denote the set of finite quotient groups of $\Gamma$. Two finitely generated groups $\Gamma$ and $\Delta$ are said to be {\it profinitely equivalent} if $\mathcal{C}(\Gamma)=\mathcal{C}(\Delta)$. This is equivalent to say that $\Gamma$ and $\Delta$ have isomorphic profinite completions $\widehat{\Gamma}\cong \widehat{\Delta}$ \cite{DFPR}. Given a collection $\mathscr{A}$ of finitely generated groups, a group $\Gamma\in \mathscr{A}$ is said to be {\it profinitely rigid in $\mathscr{A}$} if any group $\Delta\in \mathscr{A}$ that is profinitely equivalent to $\Gamma$ is isomorphic with $\Gamma$. 

Throughout this paper, let $\mathscr{M}$ denote the class of fundamental groups of compact orientable 3-manifolds. In this paper, we give a positive answer to a class of examples for Boileau--Friedl's question that also generalizes for links in $S^3$. In particular, we prove:

\begin{theorem}\label{mainthm1}
    For any two-bridge link (including knot) $L\subseteq S^3$, $\pi_1(S^3-L)$ is profinitely rigid in $\mathscr{M}$. 
\end{theorem}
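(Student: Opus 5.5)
We split according to the geometry of the two-bridge link exterior $M=S^3-L$. If $L$ is the unknot or the trivial two-component link is excluded (a two-bridge link is non-split), then $L$ is either a torus link $T(2,n)$, in which case $M$ is Seifert fibered, or $L$ is hyperbolic. The unknot case is handled by the fact that $\Z$ is profinitely rigid among 3-manifold groups: $\widehat\Gamma\cong\widehat\Z$ forces $M'$ to have virtually cyclic, hence (compact orientable) solid torus type group. For the Seifert fibered torus link case $T(2,n)$ we invoke the known profinite rigidity results for Seifert fibered spaces with nonempty boundary (Wilton--Zalesskii detect the geometry; then the base orbifold, the Euler-type data and the fibre invariants are recovered from $\widehat{\pi_1}$), together with the observation that any $N$ with $\widehat{\pi_1 N}\cong\widehat{\pi_1 M}$ must itself be irreducible with incompressible torus boundary of the same number of components, since prime decomposition, boundary components (via peripheral subgroups, Wilton--Zalesskii) and the Betti number $b_1=|L|$ are profinite invariants.

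The main case is $L$ hyperbolic. By the profinite detection of hyperbolicity and of peripheral structure, a profinitely equivalent $N$ is a finite-volume cusped hyperbolic 3-manifold with the same number of cusps, and the isomorphism $\widehat{\pi_1M}\cong\widehat{\pi_1N}$ can be taken to respect peripheral subgroups up to conjugacy. Two-bridge link complements are fibered or not according to the Alexander polynomial; in the fibered case I would use the results of Jaikin-Zapirain and Boileau--Friedl that fiberedness and the Thurston norm are profinite invariants, and that a fibered class goes to a fibered class, so $N$ fibers with monodromy whose induced action on $\widehat{\pi_1}$ of the fiber is conjugate, up to a $\Zx$-twist, to that of $M$. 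The key step, and the one I expect to be the real obstacle, is to pin down $N$ as a two-bridge link complement. My plan is to use the $\Z/2$ orbifold structure: $M$ is the double branched cover-related quotient of a Montesinos/Heckoid orbifold, more concretely $M$ is a $\Z/2$-quotient-related cover of the orbifold $S^3$ with the link as cone locus, and the rational tangle data $p/q$ is detected by the lens space $L(p,q)$ obtained as the double branched cover. I would show the profinite isomorphism extends to the orbifold groups via the hyperelliptic/meridian involution (meridians are detected profinitely as peripheral elements up to $\Zx$-powers, and the Dehn filling along meridians is compatible with profinite completion once we show the meridian slopes correspond, using the normal generation/finite-quotient detection of peripheral slopes in the style of Wilkes).

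Concretely: show that the matching of peripheral subgroups sends the meridian slope of $L$ to a slope of $N$ up to $\Zx$-multiplication, use that the $\pi$-orbifold group $\pi_1^{orb}(S^3,L;\pi)$ (quotient by squares of meridians) has profinite completion determined by $\widehat{\pi_1 M}$ and the slope, and then recover $p$ (the order of $H_1$ of the lens space, already visible from the Alexander module) and $q$ up to $q^{\pm1}\bmod p$ and sign from the profinite completion of the dihedral-type quotient, using Reidemeister torsion/linking form data that is $\Zx$-invariant up to squares. Schubert's classification then identifies the two-bridge link up to mirroring, and finally one must argue $N$ is the corresponding complement, which follows since hyperbolic two-bridge link complements are determined by their orbifold quotient via Mostow rigidity and the known commensurability/symmetry structure. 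The delicate points are the slope identification (a $\Zx$-ambiguity must be shown to be $\pm1$) and ruling out that $N$ is a non-link-complement with the same profinite data; for the latter I would rely on the profinite detection of the Dehn filling being $S^3$-like on the level of $\widehat{\pi_1}$ of the double branched cover.
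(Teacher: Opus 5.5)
\textbf{The gap: recovering $\beta$ in the hyperbolic case.} Your non-hyperbolic cases are acceptable at the level of citations. (A bounded Seifert fibered space has no Euler number to recover; what you need is a rigidity statement for bounded Seifert fibered spaces among all compact orientable 3-manifolds.) The fibering discussion is never used. The genuine gap is the step in the hyperbolic case where you recover $\beta$ (your $q$).

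\begin{itemize}
\item The $\pi$-orbifold group of $\mathbf{b}(\alpha,\beta)$ is $\pi_1(\mathrm{L}(\alpha,\beta))\rtimes\Z/2$, the dihedral group of order $2\alpha$, for \emph{every} admissible $\beta$. It is finite, so its profinite completion is itself and contains no information about $\beta$.
\item The linking form of $M_2(L)$ does not rescue this. Even if it were transported exactly across the profinite isomorphism, it only determines $\mathrm{L}(\alpha,\beta)$ up to homotopy, i.e.\ $\beta$ up to $\pm$squares mod $\alpha$. For instance $\mathrm{L}(17,3)\simeq\mathrm{L}(17,5)$, since $3\cdot 5\equiv 7^2\pmod{17}$, while $\mathbf{b}(17,3)$ and $\mathbf{b}(17,5)$ are distinct hyperbolic knots that are not mirror images of each other.
\item Reidemeister torsion does separate these lens spaces. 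But it is not an invariant of $\pi_1(\mathrm{L}(\alpha,\beta))\cong\Z/\alpha$, and you give no mechanism for reading it off from $\hatpi(\X{L})$.
\item Your closing appeal to Mostow rigidity and the orbifold quotient also fails. The $\pi$-orbifold of a two-bridge link is spherical, and two-bridge links are precisely the exception in Boileau--Zimmermann's theorem that the $\pi$-orbifold group determines the link.
\item Schubert's classification presupposes that the new link is two-bridge, which you never establish.
\end{itemize}

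\textbf{What closes these gaps in the paper.}
\begin{enumerate}
\item The $\Zx$-ambiguity you flag is removed by \autoref{thm: peripheral regular}, not by a Wilkes-style slope argument. $\Phi$ is peripheral regular with respect to an honest homeomorphism $h_\Phi$ of the boundary tori, which is orientation preserving once $N$ is oriented suitably. Filling $N$ along $h_\Phi(\mathbf m)$ gives trivial $\widehat{\pi_1}$, hence $S^3$ by residual finiteness and the Poincar\'e conjecture. So $N\cong\X{L'}$, and $\Phi$ sends preferred meridians and longitudes to preferred meridians and longitudes (\autoref{thm: hyperbolic link}).
\item Aligning the fillings of the balanced double covers along the lifted meridians (\autoref{prop: branched cover}) gives $\pi_1(M_2(L'))\cong\Z/\alpha$, so $M_2(L')$ is a lens space. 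The Hodgson--Rubinstein classification of involutions on lens spaces (\autoref{thm: involution lens space}) then gives $L'=\mathbf{b}(\alpha,\beta')$.
\item To pin down $\beta'$, pass to the $\alpha$-fold cover of $C_2(L)$ given by $\ker(\rho)$, i.e.\ the complement of the lifted great-circle link $\widetilde L\subset\widetilde{S^3}$. The restriction of $\Phi$ there is still perfect and is equivariant for the deck groups, so the $\tau$-labelings correspond. By \autoref{prop: perfect linking number}, the integer linking matrices of $\widetilde L$ and $\widetilde{L'}$ agree, and Burde's theorem (\autoref{thm: Burde}) gives $L=L'$.
\end{enumerate}
These linking numbers in the dihedral cover are exactly the information that the dihedral quotient and the linking form cannot see.
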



\begin{theorem}\label{mainthm2}
 	For any Montesinos link (including knot) $L\subseteq S^3$ with 3 rational tangles, $\pi_1(S^3-L)$ is profinitely rigid in $\mathscr{M}$. 
\end{theorem}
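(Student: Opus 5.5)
The plan is to combine the known profinite detection of the JSJ decomposition and geometry with a finer analysis of the peripheral structure in each geometric piece. Let $M=S^3-L$, where $L$ is a Montesinos link with three rational tangles, and let $N$ be a compact orientable 3-manifold with $\widehat{\pi_1 N}\cong\widehat{\pi_1 M}$.

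\emph{Step 1: reduce to irreducible manifolds with torus boundary.} Profinite completions detect the number of boundary components through $\widehat{H_1}$ and cohomological arguments. They also detect prime decompositions, by Wilton--Zalesskii, and whether boundary components are tori. These facts let us assume $N$ is irreducible with incompressible torus boundary and the same number of boundary components as $M$.

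\emph{Step 2: geometry of $M$.} Three-tangle Montesinos links are classified geometrically. A few exceptional ones are torus links, whose complements are Seifert fibered; the pretzel $(-2,3,3)$-type cases belong here. All the others are hyperbolic. Twisted $I$-bundle cases do not occur for three tangles.

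\emph{Step 3: the Seifert fibered case.} Here I would invoke Wilton--Zalesskii (together with Wilkes' refinement) that Seifert fibered manifolds with nonempty boundary are profinitely rigid among compact 3-manifolds up to known exceptions. Those exceptions do not arise for torus link complements in $S^3$, by the linking form and the structure of $\widehat{H_1}$.

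\emph{Step 4: the hyperbolic case.} Hyperbolicity and finite volume are profinite invariants, so $N$ is hyperbolic. The key tool is that the double branched cover of $S^3$ over $L$ is a Seifert fibered space with three exceptional fibers. I would exploit this through the $\mathbb{Z}/2$-symmetry: the profinite isomorphism can be chosen to respect peripheral subgroups, which is Wilton--Zalesskii peripheral regularity. Together with the detection of meridians, this transports the meridian data to $N$.

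To recover the meridians themselves, I would use the finite cyclic quotient of $\pi_1 M$ onto $\mathbb{Z}/2$ that sends every meridian to $1$. The kernel of this quotient, with the meridians filled (equivalently, the orbifold obtained by Dehn filling along the meridians), should be detected via the profinite Dehn filling results of Xu / Liu. This filling gives the orbifold $\mathbb{S}^2(p_1,p_2,p_3)$ quotient, that is, the Seifert fibered double branched cover. Those covers are profinitely rigid and their Seifert invariants are recovered up to sign. The invariants determine the Montesinos link up to mirror by Bonahon--Siebenmann. One then checks that $N$ is also the complement of a Montesinos link: the matching meridian filling of $N$ yields the same Seifert orbifold, and Montesinos' characterization via involutions with a Seifert quotient applies.

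\emph{Main obstacle.} The hardest step is showing that the profinite isomorphism carries the meridian slopes of $M$ to actual (non-profinite) slopes of $N$, up to $\widehat{\mathbb{Z}}^\times$ ambiguity. The profinite isomorphism only matches peripheral subgroups up to a $\widehat{\mathbb Z}^\times$-twist. I would attack this by using the homological ``slope'' invariants, together with the fact that $\widehat{H_1}$ of the finite-index covers, Alexander-type data, and the cusp-volume or longitude structure distinguish the meridian among primitive slopes.

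The two-bridge case (Theorem \ref{mainthm1}) would be handled the same way, with lens space double covers in place of Seifert fibered double covers.
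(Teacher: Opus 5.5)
\textbf{The proposal has genuine gaps.} It leaves out a whole family of cases, and in the hyperbolic case it rests on a false rigidity claim.

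\textbf{Missing non-geometric cases.} Your Step 2 classification is wrong. By Oertel's classification, some three-tangle Montesinos links are neither Seifert nor hyperbolic: up to mirror, $\mathbf{M}(\frac{2}{3},-\frac{1}{3},-\frac{1}{3})$, $\mathbf{M}(\frac{1}{2},-\frac{1}{4},-\frac{1}{4})$ and $\mathbf{M}(\frac{1}{2},-\frac{1}{3},-\frac{1}{6})$ have exteriors that are non-geometric graph manifolds. (Also, the Seifert ones are not all torus links, e.g.\ $\mathbf{M}(-\frac{1}{2},\frac{1}{2},\frac{1}{p})$.) Your argument says nothing about these three links, so the theorem is not covered. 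The paper handles them as follows.
\begin{itemize}
\item The exterior is two Seifert pieces, $(0,2;\frac12)$ and/or $\Sigma_{0,3}\times S^1$, glued along one torus.
\item The profinite completion detects the JSJ pieces and their fundamental groups.
\item A boundary-component count excludes $\Sigma_{1,1}\times S^1$.
\item The intersection number $n\in\{1,3\}$ of the two fibres on the JSJ torus is a profinite invariant.
\item Both pieces admit orientation-reversing self-homeomorphisms. Hence the gluing is determined by $n$, via a double-coset computation in $\mathrm{GL}_2(\mathbb{Z})$ that works because $n\in\{1,2,3,4,6\}$.
\end{itemize}

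\textbf{Hempel pairs in the hyperbolic case.} Your assertion that the double branched covers ``are profinitely rigid and their Seifert invariants are recovered up to sign'' is false. Closed $\mathbb{H}^2\times\mathbb{R}$ Seifert fibred spaces come in Hempel pairs, and these occur here. For example, $(0,0;\frac15,\frac15,-\frac25)$ and $(0,0;\frac25,\frac25,-\frac45)$ have isomorphic profinite completions but are not homeomorphic. They are the double branched covers of two hyperbolic Montesinos links that are neither isotopic nor mirror images.

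The missing idea is that the isomorphism $\phi$ on $\widehat{\pi_1}(M_2)$ is not arbitrary.
\begin{itemize}
\item It is induced, via Dehn filling, from the restriction $\check{\Phi}$ to the balanced double covers $C_2(L)$ and $C_2(L')$.
\item These covers are cusped hyperbolic, so $\check{\Phi}$ is strongly regular by Xu's theorem.
\item Strong regularity passes to $\phi$.
\item A Wilkes-type cohomological argument then forces the Hempel scale factor to be $\pm1$, giving $M_2(L)\cong M_2(L')$.
\end{itemize}

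\textbf{Meridians left unresolved.} The step you call the main obstacle is not resolved. Wilton--Zalesskii only match closures of peripheral subgroups up to conjugacy. Cusp volume and Alexander-type data are not known to pin down the meridian. What is needed is Xu's theorem that every such isomorphism between cusped hyperbolic groups is peripheral regular with respect to a genuine boundary homeomorphism $h$. Profinite Dehn filling of $N$ along $h(\mathbf{m})$ then gives a closed manifold with trivial profinite completion. By residual finiteness and the Poincar\'e conjecture it is $S^3$. This shows at once that $N$ is a link exterior and that meridians correspond. Without it you cannot define $M_2(L')$ or invoke the Dunbar/Motegi characterisation at all.
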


We remark that two-bridge links are exactly Montesinos links with no more than 2 rational tangles. However, the results are listed separately since they involve slightly different technical details. \autoref{mainthm1} is proven as \autoref{thm: two bridge} and \autoref{mainthm2} is proven as a combination of \autoref{thm: Montesinos 1} and \autoref{thm: Montesinos 2}.

A significance of profinite rigidity is its relation to the isomorphism problem. 
Indeed, if a group $\Gamma$ is profinitely rigid within a class $\mathscr{A}$ of finitely presented groups, then there exists an algorithm that, given a finite presentation of any $\Delta \in \mathscr{A}$, decides in finite time whether $\Gamma$ and $\Delta$ are isomorphic; see \cite[Page 5]{BCR16}. This proceeds by simultaneously searching either for  a finite group belonging to $\mathcal{C}(\Gamma)\vartriangle\mathcal{C}(\Delta)$ -- which terminates in finite time if $\Gamma$ and $\Delta$ are not profinitely equivalent, or for an explicit isomorphism between $\Gamma$ and $\Delta$ -- which terminates in finite time as long as they are isomorphic. 
In particular, the groups $\pi_1(S^3-L)$ as appearing in \autoref{mainthm1} and \autoref{mainthm2} have solvable isomorphism problem in $\mathscr{M}$. This gives a new solution to the recognition problem of two-bridge knots and 3-tangle Montesinos knots, which is different from \cite{Geo93} based on the theory of normal surfaces. 
\begin{corollary}
Fix $K\subseteq S^3$ to be a two-bridge knot or a Montesinos knot with 3 rational tangles. There exists an algorithm that inputs a polygonal knot digram $D$ and determines whether the diagram $D$ represents the knot $K$. 
\end{corollary}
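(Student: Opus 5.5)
The corollary will follow from \autoref{mainthm1} and \autoref{mainthm2} together with the general principle recalled above from \cite[Page 5]{BCR16}: profinite rigidity inside a class of finitely presented groups yields a solution to the isomorphism problem against the fixed group. The plan is to convert the diagram into a finite presentation of a group in $\mathscr{M}$, and then run two semi-decision procedures in parallel.

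First, from a polygonal diagram $D$, compute the Wirtinger presentation of $\pi_1(S^3-K_D)$, where $K_D$ is the knot represented by $D$. This group lies in $\mathscr{M}$, since it is the fundamental group of the compact orientable knot exterior. Fix once and for all a Wirtinger presentation of $\Gamma=\pi_1(S^3-K)$.

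Second, run two searches in parallel.
\begin{enumerate}
\item[(a)] Enumerate finite groups $Q$. For each $Q$, decide by finite enumeration of images of generators whether $Q$ is a quotient of exactly one of $\Gamma$ and $\Delta=\pi_1(S^3-K_D)$.
\item[(b)] Enumerate Tietze transformation sequences, equivalently candidate homomorphisms $\Gamma\to\Delta$ and $\Delta\to\Gamma$ that are checked to be mutually inverse on generators via relator verification. This verification is possible because such checks only require exhibiting words as products of conjugates of relators, and those products can also be enumerated.
\end{enumerate}
If $\Gamma\not\cong\Delta$, then by \autoref{mainthm1} or \autoref{mainthm2} the groups are not profinitely equivalent, so search (a) terminates. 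If $\Gamma\cong\Delta$, search (b) terminates.

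Third, and this is the main subtlety, we must pass from group isomorphism to knot equivalence. Knot exteriors are irreducible, and the fundamental group does not by itself record the orientation/mirror issue. Two-bridge and Montesinos knots are prime, so if $\pi_1(S^3-K_D)\cong\Gamma$, then $K_D$ is prime by the following argument. The group of a composite knot has a splitting over $\Z^2$, namely the essential swallow-follow torus. In the JSJ decomposition, which is determined by the group, this differs from that of $K$: two-bridge knot exteriors are atoroidal, and 3-tangle Montesinos exteriors are atoroidal or small Seifert fibered. Hence the Whitten and Gordon--Luecke result \cite{Whi74,GL89} gives that $K_D$ is equivalent to $K$ up to mirroring. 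To decide the mirror question, observe that both $K$ and its mirror $\overline{K}$ are then candidates. Equivalence of $K_D$ with $K$ (as opposed to $\overline K$) can be decided by additionally searching for Reidemeister move sequences from $D$ to a fixed diagram of $K$ and from $D$ to a fixed diagram of $\overline K$; one of these searches must terminate. If $K$ is amphichiral, both are fine. If $K$ is not, the search first reaching $K$ or $\overline{K}$ settles the answer, since a single diagram cannot represent both. If ``represents $K$'' is interpreted up to mirroring, as in \cite{Whi74,GL89}, this last step is unnecessary.
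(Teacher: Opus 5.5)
Your proposal is correct and follows the same route as the paper: the Wirtinger presentation, the parallel search from \cite{BCR16} (which is decisive by \autoref{mainthm1} and \autoref{mainthm2}), then \cite{Whi74,GL89} and a Reidemeister-move search against fixed diagrams of $K$ and its mirror image. The differences are minor: you justify explicitly why $K_D$ must be prime, and you handle the amphichirality of the fixed knot $K$ as a hardcoded bit, which is legitimate for an existence statement, whereas the paper makes that step effective by reading amphichirality off the classifications in \autoref{thm: two bridge classification} and \autoref{prop: Montesinos classification}.
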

\begin{proof}
Let $K'$ be the knot represented by the digram $D$. One can obtain a finite presentation of $\pi_1(S^3-K')$ via the Wirtinger presentation. Then one apply the preceding algorithm to determine whether $\pi_1(S^3-K')$ is isomorphic to $\pi_1(S^3-K)$. If they are not isomorphic, then clearly $D$  does not represent the knot $K$.  If they are isomorphic, since $K$ is a prime knot, \cite{Whi74} and  \cite{GL89} imply that $D$ represents either $K$ or its mirror image.  One can fix knot diagrams for $K$ and its mirror image, and through a finite search of Reidemeister moves, one can certify that $D$ represents $K$ or that $D$ represents the mirror image of $K$. If $D$ represents $K$, then we are done. Otherwise, if $D$ represents the mirror image of $K$, then $D$ represents $K$ if and only if $K$ is isotopic to its mirror image, and this can be verified through the complete classification for these families of knots (\autoref{thm: two bridge classification}, \autoref{prop: Montesinos classification}).  
\end{proof}
As a historical remark, \autoref{mainthm1} and \autoref{mainthm2} cover some earlier results of knots and links in $S^3$ whose complements have fundamental groups that are profinitely rigid in $\mathscr{M}$. These include the figure-eight knot by Bridson-Reid \cite{BR20} via a distinct approach, and a range of examples by the second author \cite{Xu24} such as the Whitehead link, some families of Pretzel links including $(-2,3,7)$ and $(-2,3,8)$, (full) twist knots, and $\mathbf{b}(10,3)$ etc. More results on profinite rigidity of knot groups can be found at \cite{Wilkes19,BF20,CW23}.

The proofs for \autoref{mainthm1} and \autoref{mainthm2}  are motivated by the observation that the links involved are characterized, up to isotopy and mirror image, by the homeomorphism type of their two-fold branched covers. We briefly outline here the strategy for  \autoref{mainthm1}. Consider as an example that $L\subseteq S^3$ is a hyperbolic two-bridge link, and $N$ is a compact orientable 3-manifold with $\pi_1(N)$ profinitely equivalent with $\pi_1(S^3-L)$. \cite[Theorem 4.1]{Xu24} implies that $N$ is the exterior of a hyperbolic link $L'\subseteq S^3$. The $\pi$-orbifold group of $L$ is a finite dihedral group which appears as a finite quotient of $\pi_1(S^3-L)$. Thus, this finite dihedral group also appears as a finite quotient of $\pi_1(N)$. A careful application of \cite{Xu24} implies that this finite dihedral quotient of $\pi_1(N)$ also arises from the $\pi$-orbifold group of $L'$. Then, a classification theorem (e.g. \cite[Proposition 3.2]{BoileauZimmermann})  can be applied to show that $L'$ is also a two-bridge link. However, there exist non-equivalent two-bridge links with isomorphic $\pi$-orbifold groups. To eliminate this possibility, we apply a theorem of Burde \cite{Bur75} that distinguishes these two-bridge knots via linking number invariants in a finite-sheeted great circle link cover of the link complement corresponding to the kernel of this finite-dihedral quotient.

Although they are not directly applicable to the proofs of \autoref{mainthm1} and \autoref{mainthm2}, we present here two further criteria for the profinite rigidity of knot or link groups in $\mathscr{M}$ that follow the same spirit. 
\begin{theorem}\label{crit1}
Suppose $L$ is a hyperbolic link in $S^3$ which is not a two-bridge link. If the $\pi$-orbifold group of $L$ is profinitely rigid among all $\pi$-orbifold groups of hyperbolic links in $S^3$, then $\pi_1(S^3-L)$ is profinitely rigid in $\mathscr{M}$. 
\end{theorem}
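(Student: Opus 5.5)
Let $N$ be a compact orientable 3-manifold with $\widehat{\pi_1(N)}\cong\widehat{\pi_1(S^3-L)}$. The plan has three steps: recognize $N$ as a hyperbolic link exterior, match the $\pi$-orbifold quotients, and then recover $L$ from its orbifold.

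\textbf{Step 1: $N$ is a hyperbolic link exterior.} Since $L$ is hyperbolic, \cite[Theorem 4.1]{Xu24} shows that $N$ is the exterior of a hyperbolic link $L'\subseteq S^3$. The profinite isomorphism can moreover be chosen to respect the peripheral structure: each peripheral subgroup is sent to a conjugate of the closure of a peripheral subgroup, up to an automorphism of $\widehat{\Z}^2$. The key input is that meridians are sent to $\widehat{\Z}^\times$-powers of conjugates of meridians. This is where \cite{Xu24} gives profinite detection of the meridian slopes, for instance via $H_1$ and the Alexander/Thurston-norm data.

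\textbf{Step 2: matching the $\pi$-orbifold groups.} The $\pi$-orbifold group $O(L)$ is the quotient of $\pi_1(S^3-L)$ by the normal closure of the squares of meridians. Profinitely, $\widehat{O(L)}$ is the quotient of $\widehat{\pi_1(S^3-L)}$ by the closed normal subgroup generated by squares of meridians. This subgroup is intrinsic once meridians are known up to conjugacy and $\widehat{\Z}^\times$-powers, because $\mu^{2\lambda}$ with $\lambda$ a unit generates the same closed subgroup as $\mu^2$. By Step 1 this closed normal subgroup is carried to the corresponding one for $L'$, so $\widehat{O(L)}\cong\widehat{O(L')}$. Since $L$ is hyperbolic and not two-bridge, $O(L)$ is an infinite orbifold group: the Montesinos-type cases are the ones where the orbifold is spherical. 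The same dichotomy should hold for $L'$, since finiteness of $O(L')$ is detected by finiteness of the completion $\widehat{O(L')}$, given that 3-orbifold groups are residually finite. Thus $L'$ is also a hyperbolic, non-two-bridge link, and $O(L')$ is a $\pi$-orbifold group of a hyperbolic link. The hypothesis then gives $O(L)\cong O(L')$.

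\textbf{Step 3: from the orbifold to the link.} An isomorphism of $\pi$-orbifold groups of hyperbolic links should be induced by a homeomorphism of the orbifolds $(S^3,L)_\pi$, provided these orbifolds are hyperbolic or at least geometric/Haken. This would follow from Mostow rigidity or orbifold rigidity results, as in \cite{BoileauZimmermann}. A homeomorphism of pairs $(S^3,L)\cong(S^3,L')$ yields $S^3-L\cong S^3-L'$, hence $\pi_1(N)\cong\pi_1(S^3-L)$.

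\textbf{Main obstacle.} The delicate point is the peripheral control in Step 2, which is needed to identify the meridian-square subgroups. A secondary issue arises in Step 3 when the $\pi$-orbifold is not hyperbolic, for instance when it contains essential Euclidean or Seifert pieces. There I would rely on Boileau--Zimmermann's result that $\pi$-orbifold groups determine non-two-bridge links, which is exactly why two-bridge links are excluded from the statement.
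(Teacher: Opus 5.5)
Your outline is the paper's proof. You get a perfect isomorphism from the hyperbolic-link recognition theorem, identify $\widehat{O(L)}\cong\widehat{O(L')}$, apply the rigidity hypothesis to $O(L')$, and finish with Boileau--Zimmermann's Theorem~3.1; for the middle step you argue directly via closed normal closures of meridian squares, where the paper cites the orbifold version of Dehn filling alignment. Your claim that a hyperbolic non-two-bridge link must have infinite $\pi$-orbifold group is false: $\mathbf{M}(\frac12,\frac13,\frac15)$ is hyperbolic with spherical double branched cover. It is not needed, though, since $L'$ is hyperbolic by Step~1 and Boileau--Zimmermann only requires $L$ (prime and unsplittable, as every hyperbolic link is) to be non-two-bridge, which also covers the spherical case where Mostow rigidity is unavailable.
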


\begin{theorem}\label{crit2}
Let $K$ be a hyperbolic knot in $S^3$. Suppose that for infinitely many integers $r\ge 2$, the $r$-fold cyclic  cover of $S^3$ branched along $K$ has its fundamental group profinitely rigid in $\mathscr{M}$. Then, $\pi_1(S^3-K)$ is also profinitely rigid in $\mathscr{M}$. 
\end{theorem}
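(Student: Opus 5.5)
We argue as in the two-bridge outline above, with the $r$-fold branched covers of $K$ playing the role of the $\pi$-orbifold group. Let $N$ be a compact orientable 3-manifold with $\widehat{\pi_1(N)}\cong\widehat{\pi_1(S^3-K)}$. By \cite[Theorem 4.1]{Xu24}, $N$ is the exterior of a hyperbolic knot $K'\subseteq S^3$. Moreover, for a knot exterior we may take the isomorphism $\Phi\colon\widehat{\pi_1}(S^3-K')\to\widehat{\pi_1}(S^3-K)$ to be peripheral regular, meaning it sends the closure of a peripheral subgroup to a conjugate of the closure of a peripheral subgroup. Since $H_1=\Z$ on both sides, $\Phi$ induces multiplication by a unit $\lambda\in\Zx$ on $\widehat{H_1}=\widehat{\Z}$. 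From the peripheral results in \cite{Xu24} (profinite detection of the meridian up to $\lambda$-power, and of the longitude up to sign), I would extract a stronger statement: $\Phi$ carries $\overline{\langle m'\rangle}$ to a conjugate of $\overline{\langle m\rangle}$, with $\Phi(m')$ conjugate to $m^{\lambda}$.

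Next, fix $r\ge 2$. Let $M_r$ and $M'_r$ denote the $r$-fold cyclic branched covers along $K$ and $K'$. Then $\pi_1(M_r)$ is the quotient of the kernel $G_r$ of $\pi_1(S^3-K)\to\Z/r$ by the normal closure of $m^r$, and similarly for $K'$. Because $\Phi$ respects the abelianization up to a unit, it maps $\widehat{G'_r}$ onto $\widehat{G_r}$. It also carries the normal closure of $(m')^r$ to the normal closure of $m^{\lambda r}$, and $m^{\lambda r}$ topologically generates the same closed subgroup as $m^r$, since $\lambda$ is a unit. Finite-index subgroups and quotients by closed normal subgroups behave well under profinite completion: the completion of $\Gamma/\langle\langle g\rangle\rangle$ equals $\widehat{\Gamma}/\overline{\langle\langle g\rangle\rangle}$. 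So we obtain $\widehat{\pi_1(M'_r)}\cong\widehat{\pi_1(M_r)}$ for every $r$. For the infinitely many $r$ in the hypothesis, profinite rigidity of $\pi_1(M_r)$ in $\mathscr{M}$ then gives $\pi_1(M'_r)\cong\pi_1(M_r)$.

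For the conclusion, we need to pass from isomorphic fundamental groups of infinitely many branched covers to $K'=K$ up to mirroring. For large $r$, orbifold hyperbolic Dehn filling makes $M_r$ and $M'_r$ closed hyperbolic, so by Mostow rigidity $M_r\cong M'_r$. For $r$ large, the core of the filling, namely the branch locus, is the unique shortest geodesic. An isometry therefore carries branch locus to branch locus and intertwines the deck group $\Z/r$, which is canonical as the stabilizer of the short geodesic. Passing to the quotient orbifolds $(S^3,K)\cong(S^3,K')$ then gives $K'=K$ up to mirroring. Alternatively, one can invoke the known theorem that a hyperbolic knot is determined by its $r$-fold cyclic branched covers for infinitely many $r$, or for a single $r$ not a power of $2$ (Kojima's rigidity argument). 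By \cite{Whi74}, \cite{GL89} this yields $\pi_1(N)\cong\pi_1(S^3-K)$.

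The main obstacle is the peripheral step: showing that $\Phi$ maps the meridian's closed normal subgroup exactly onto that of $m$. The longitude/meridian structure is only controlled up to $\Zx$-twisting, and in general one gets $m\mapsto m^{\lambda}\ell^{\mu}$. I would first try to show that $\mu=0$, using that meridians normally generate while other slopes do not. Concretely, the closed normal closure of $\Phi(m')$ must be all of $\widehat{\pi_1}$, and this forces the slope to be the meridional one.
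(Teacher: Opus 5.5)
Your proposal has a genuine gap at the peripheral step, which you flag yourself at the end, and the normal-generation idea you suggest does not close it.

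\textbf{The gap.} Wilton--Zalesskii gives only your weak sense of ``peripheral regular'': closures of peripheral subgroups go to conjugates of closures of peripheral subgroups. Combined with $\Phi^{\mathrm{Ab}}=\lambda\in\Zx$ on $\widehat{H_1}\cong\widehat{\Z}$, this tells you only that the restriction of $\Phi$ to a peripheral $\widehat{\Z}^2$ is some matrix in $\GL_2(\widehat{\Z})$. So $\Phi(m')$ is conjugate to $m^{\lambda}\ell^{\mu}$ with $\mu\in\widehat{\Z}$ arbitrary a priori.

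The argument ``the closed normal closure is everything, so the slope is meridional'' only works for an \emph{integral} slope. Suppose $\Phi(m')$ is conjugate to a $\widehat{\Z}$-power of some $m^{a}\ell^{b}$ with $a,b\in\Z$. Then $\hatpi(\X{K})/\overline{\langle\langle m^a\ell^b\rangle\rangle}$ is the profinite completion of $\pi_1$ of the $a/b$ filling. Its triviality forces that filling to be $S^3$ by residual finiteness and Poincar\'e, so $a/b$ is meridional by Gordon--Luecke.

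If instead $\mu/\lambda\notin\Z$, then $m^{\lambda}\ell^{\mu}$ is not a profinite power of any element of $\pi_1(\partial\X{K})$. The quotient $\hatpi(\X{K})/\overline{\langle\langle m^{\lambda}\ell^{\mu}\rangle\rangle}$ then has no interpretation as the completion of a Dehn filling, and nothing in your proposal rules out that it is trivial. Excluding such non-integral profinite slopes is not a formal consequence of normal generation; it is a deep input.

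\textbf{How the paper fills it.} The paper supplies this input through \autoref{thm: peripheral regular} (from \cite[Theorem 1.4]{Xu25}). Up to conjugation, the restriction of $\Phi$ to a peripheral subgroup \emph{is} $\widehat{h_\ast}$ for an honest homeomorphism $h$ of boundary tori. So the image of the meridian is an integral slope, and your normal-generation reasoning becomes valid. Concretely, Dehn filling alignment \cite[Theorem A]{Xu24} shows that the filling of $N$ along $h(m)$ has trivial $\hatpi$, hence is $S^3$. Taking $K'$ to be the core of that filling gives the perfect isomorphism of \autoref{thm: hyperbolic link}, which is all your Step 2 needs.

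\textbf{The rest.} With that in hand, the remainder of your proposal is sound and matches the paper. Your quotient by $\overline{\langle\langle m^r\rangle\rangle}$ followed by restriction to the index-$r$ kernel is \autoref{prop: orbifold surgery} plus \autoref{prop: r-fold cyclic}, done in the other order. Your ending via Kojima's Lemma 4, Mostow rigidity and Kojima's main theorem is the same as the paper's.

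One correction: drop the aside about a single $r$ not a power of $2$. That is not Kojima's theorem; his result needs $r$ beyond a bound depending on the pair of knots, which is why infinitely many $r$ are assumed. Single-$r$ determination fails in general and is not needed here.
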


\autoref{crit1} and \autoref{crit2} will be proven in Section~\ref{motivatingcriteria}.

\subsection*{Acknowledgments} TCW thanks Alan Reid and Ryan Spitler for helpful conversations about this project. XX would like to thank Yi Liu for discussions.

\section{Preliminaries.}\label{prelim}

\subsection{Profinite completion} In addition to the materials covered in this section, we refer the readers to \cite{RZ10} for a standard reference on profinite groups. 

A {\em profinite group} is an inverse limit of finite groups indexed over a  directed partially ordered set, equipped with the subspace topology inherited from the product
topology. 
The {\em profinite completion} of an abstract  group $\Gamma$ is defined as a profinite group
$$
\widehat{\Gamma}=\varprojlim_{N\in \mathcal N} \Gamma/N
$$ 
where $\mathcal{N}$ is the collection of all finite-index normal subgroups of $\Gamma$ indexed by reverse inclusion. 

There is a canonical homomorphism $i_\Gamma:\Gamma\to\widehat{\Gamma}$ with dense image sending every element $\gamma\in\Gamma$ to a tuple $(\pi_N(\gamma))_{N\in\mathcal{N}}\in \widehat{\Gamma}$ where  $\pi_N:\Gamma\to \Gamma/N$ is the quotient map. 
It is clear that $i_\Gamma$ is injective if and only if $\Gamma$ is residually finite. For ease of notation, when $\Gamma$ is residually finite, we always identify $\Gamma$ as its image in $\widehat{\Gamma}$, and elements in $\Gamma$ 
are also viewed as elements in $\widehat{\Gamma}$.

By a {\em homomorphism of profinite groups} $\Phi:G_1\to G_2$, we always mean a continuous homomorphism. However, a deep theorem of Nikolov--Segal \cite{NS} implies that any abstract  homomorphism from a topologically finitely generated profinite group to a profinite group is continuous. Thus, we shall not stress upon the continuity when $G_1$ and $G_2$ are topologically finitely generated, especially when they are the profinite completions of finitely generated groups. 

The profinite completion encodes the full data of finite quotients. 

\begin{theorem}[\cite{DFPR}]
For two finitely generated abstract groups $\Gamma_1$ and $\Gamma_2$, $\mathcal{C}(\Gamma_1)=\mathcal{C}(\Gamma_2)$ if and only if $\widehat{\Gamma_1}\cong \widehat{\Gamma_2}$. 
\end{theorem}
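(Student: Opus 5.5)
The statement is the classical fact that, for finitely generated groups, having the same finite quotients is equivalent to having isomorphic profinite completions. I will prove the two directions separately, working with the finite quotient $\mathcal{C}(\Gamma)$ in terms of continuous finite quotients of $\widehat{\Gamma}$.

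\emph{Finite quotients of $\Gamma$ and of $\widehat\Gamma$ coincide.} Every finite quotient $\Gamma\to Q$ factors through $\widehat\Gamma$ by the universal property, and the extension is surjective: the image is closed and contains the dense image of $\Gamma$. Conversely, a continuous surjection $\widehat\Gamma\to Q$ has open kernel. Composing with $i_\Gamma$ still gives a surjection onto $Q$, because $i_\Gamma(\Gamma)$ is dense and therefore meets every coset of the open kernel. Hence $\mathcal{C}(\Gamma)$ is exactly the set of continuous finite quotients of $\widehat\Gamma$. So if $\widehat{\Gamma_1}\cong\widehat{\Gamma_2}$ as topological groups, then $\mathcal{C}(\Gamma_1)=\mathcal{C}(\Gamma_2)$. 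If the isomorphism is only abstract, it is automatically continuous by Nikolov--Segal, since both groups are topologically finitely generated.

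\emph{The converse, $\mathcal{C}(\Gamma_1)=\mathcal{C}(\Gamma_2)$ implies $\widehat{\Gamma_1}\cong\widehat{\Gamma_2}$.} This is the main step. For each $n$, let $M_n(\Gamma)$ be the intersection of all normal subgroups of $\Gamma$ of index at most $n$. Since $\Gamma$ is finitely generated, there are only finitely many such subgroups: each is the kernel of a homomorphism to a group of order at most $n$, determined by the images of finitely many generators. So $M_n$ has finite index, the $M_n$ are cofinal in the system of finite-index normal subgroups, and
\[
\widehat\Gamma=\varprojlim_n \Gamma/M_n(\Gamma).
\]
Next I claim $\Gamma_1/M_n(\Gamma_1)\cong\Gamma_2/M_n(\Gamma_2)$. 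Set $Q=\Gamma_1/M_n(\Gamma_1)$. It lies in $\mathcal{C}(\Gamma_1)=\mathcal{C}(\Gamma_2)$, so there is a surjection $\Gamma_2\to Q$. The group $Q$ is a subdirect product of groups of order at most $n$, so its own $M_n(Q)$ is trivial. Pulling back, $M_n(\Gamma_2)$ lies in the kernel, so $\Gamma_2/M_n(\Gamma_2)$ surjects onto $Q$. By symmetry $Q$ also surjects onto $\Gamma_2/M_n(\Gamma_2)$. These are finite groups, so the two groups are isomorphic.

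The delicate point is assembling these level-wise isomorphisms compatibly. The isomorphisms need not be compatible with the inverse system, so I will use a compactness argument. Let $I_n$ be the set of isomorphisms $\Gamma_1/M_n(\Gamma_1)\to\Gamma_2/M_n(\Gamma_2)$; it is finite and, by the previous step, nonempty. Any isomorphism $\phi\in I_{n+1}$ carries $M_n$ to $M_n$, because $M_n$ is characteristically defined: it is the intersection of the normal subgroups of index at most $n$, and an isomorphism preserves this collection. So $\phi$ induces an element of $I_n$, giving restriction maps $I_{n+1}\to I_n$. An inverse limit of nonempty finite sets is nonempty, by König's lemma. A point of $\varprojlim I_n$ is a compatible family of isomorphisms, which yields an isomorphism $\widehat{\Gamma_1}\cong\widehat{\Gamma_2}$.
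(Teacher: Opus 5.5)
The paper gives no proof of its own here; it only cites Dixon--Formanek--Poland--Ribes, and your argument is correct and is essentially the standard proof of that result. The ingredients are the characteristic subgroups $M_n$ cut out by the normal subgroups of index at most $n$, the level-wise isomorphisms $\Gamma_1/M_n(\Gamma_1)\cong\Gamma_2/M_n(\Gamma_2)$ obtained from mutual surjections of finite groups, and a compactness argument on the nonempty finite sets of isomorphisms, whose restriction maps are well defined because $M_n(\Gamma_i/M_{n+1}(\Gamma_i))=M_n(\Gamma_i)/M_{n+1}(\Gamma_i)$.
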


In addition to the set of finite quotients, the profinite completion of an abstract group also encodes its lattice of finite-index subgroups. 
\begin{proposition}[{\cite[Proposition 3.2.2]{RZ10}}] For an abstract group $\Gamma$, there is an isomorphism between the lattice of finite-index subgroups of $\Gamma$ and the lattice of open subgroups in $\widehat{\Gamma}$ given
as follows.
\begin{equation*}
\begin{tikzcd}[row sep=0.01cm]
\{\text{Finite-index subgroups of }\Gamma\} \arrow[r, "1:1", leftrightarrow] & \{\text{Open subgroups of }\widehat{\Gamma}\}\\
H \arrow[r, maps to] & \overline{i_{\Gamma}(H)}\cong \widehat{H}\\
i_{\Gamma}^{-1}(U) & U \arrow[l,maps to]
\end{tikzcd}
\end{equation*}
In addition, this correspondence sends normal subgroups to normal subgroups.
\end{proposition}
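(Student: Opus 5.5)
The plan is to prove both directions of the correspondence from the definition $\widehat{\Gamma}=\varprojlim \Gamma/N$ and the basic properties of open subgroups in profinite groups.

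\textbf{Step 1: reduce to normal cores.} Let $H\le\Gamma$ have finite index. Its normal core $N=\bigcap_{g}gHg^{-1}$ is again of finite index and normal, so $N\in\mathcal N$. Let $\pi_N\colon\widehat{\Gamma}\to\Gamma/N$ be the continuous projection. Then $U_N=\ker\pi_N$ is an open normal subgroup of finite index, and $\pi_N\circ i_\Gamma$ is the quotient map $\Gamma\to\Gamma/N$. Put $U=\pi_N^{-1}(H/N)$. This $U$ is open, since it is a finite union of cosets of $U_N$, and it is closed. Because $i_\Gamma(\Gamma)$ is dense and $U$ is open, $i_\Gamma(H)=i_\Gamma(\Gamma)\cap U$ is dense in $U$. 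Since $U$ is closed, $\overline{i_\Gamma(H)}=U$. Moreover $i_\Gamma^{-1}(U)=(\pi_N\circ i_\Gamma)^{-1}(H/N)=H$. So the map $H\mapsto\overline{i_\Gamma(H)}$ lands in open subgroups and has a left inverse.

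\textbf{Step 2: the other composite.} An open subgroup $U$ of the compact group $\widehat{\Gamma}$ has finite index. Open subgroups form a neighbourhood basis of $1$, and the basic open normal subgroups are the kernels $U_N$. Hence $U$ contains some $U_N$, and this containment is the step that needs care. Once we have it, $U=\pi_N^{-1}(\pi_N(U))$. Set $H=i_\Gamma^{-1}(U)$, which is the preimage in $\Gamma$ of $\pi_N(U)$ under the surjection $\Gamma\to\Gamma/N$. So $H$ has finite index and contains $N$, and Step 1 gives $\overline{i_\Gamma(H)}=\pi_N^{-1}(H/N)=U$.

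\textbf{Step 3: lattice structure and normality.} Both maps are visibly inclusion-preserving, so they are mutually inverse lattice isomorphisms. Normality is preserved in both directions:
\begin{itemize}
\item if $H$ is normal, then $N$ may be taken to be $H$, and $U$ is the kernel of $\widehat{\Gamma}\to\Gamma/H$;
\item if $U$ is normal, then $i_\Gamma^{-1}(U)$ is a preimage of a normal subgroup.
\end{itemize}

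\textbf{Step 4: identifying $\overline{i_\Gamma(H)}$ with $\widehat{H}$.} The finite-index normal subgroups of $H$ and the intersections $M\cap H$ with $M\in\mathcal N$ are mutually cofinal. Indeed, a finite-index normal subgroup $K\trianglelefteq H$ has finite index in $\Gamma$, so its $\Gamma$-core lies in $\mathcal N$ and sits inside $K$. Therefore
\[
\widehat{H}=\varprojlim H/(H\cap M)\cong \varprojlim HM/M .
\]
The right-hand side is the closure of $H$ in $\varprojlim\Gamma/M$, by the standard description of closures in inverse limits.

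The main obstacle is the topological input in Step 2, namely that every open subgroup contains some $U_N$. I would deduce it from the fact that the $U_N$ form a basis of neighbourhoods of $1$, which follows from the product topology on $\prod\Gamma/N$ and the directedness of $\mathcal N$.
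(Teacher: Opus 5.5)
Your proof is correct and is the standard argument behind the cited \cite[Proposition 3.2.2]{RZ10}; the paper gives no proof of its own, only that reference. The one cosmetic point is that in Step 2 you invoke Step 1 with an $N\in\mathcal N$ that need not be the normal core of $H$, but Step 1 only uses $N\le H$, so it applies verbatim.
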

\begin{definition}
Let $\Gamma_1$ and $\Gamma_2$ be abstract groups. Given an isomorphism $\Phi: \widehat{\Gamma_1}\to \widehat{\Gamma_2}$, we say that two finite-index subgroups $H_1\le \Gamma_1$ and $H_2\le \Gamma_2$ are {\em $\Phi$--corresponding} if $\Phi( \overline{i_{\Gamma_1}(H_1)})= \overline{i_{\Gamma_2}(H_2)}$. 
\end{definition}

The profinite completion is also functorial. Any homomorphism between abstract groups $\phi:\Gamma_1\to \Gamma_2$ induces a homomorphism of profinite groups $\widehat{\phi}: \widehat{\Gamma_1}\to \widehat{\Gamma_2}$ in the following sense. Let $\mathcal{N}_1$ and $\mathcal{N}_2$ be the inverse system of finite-index normal subgroups of $\Gamma_1$ and $\Gamma_2$. Then, $\phi^{-1}(\mathcal{N}_2)$ is a subsystem of $\mathcal{N}_1$, and the homomorphism 
\begin{equation*}
\begin{tikzcd}
\widehat{\Gamma_1}\arrow[r] & \prod_{N\in \phi^{-1}(\mathcal{N}_2)} \Gamma_1/N \arrow[r,"\phi"] & \prod_{M\in \mathcal{N}_2}\Gamma_2/M
\end{tikzcd}
\end{equation*}
has its image contained in $\varprojlim_{M\in\mathcal{N}_2}\Gamma_2/M= \widehat{\Gamma_2}$. This resulting map is defined as $\widehat{\phi}$. It is clear that $\hat{\phi}\circ i_{\Gamma_1}=i_{\Gamma_2}\circ\phi$.

\subsection{Profinite completions of 3-manifold groups}
For $M$ a compact 3-manifold, $\pi_1(M)$ is residually finite. This follows by combining a theorem of Hempel \cite{HempelRF} with Agol's positive resolution to Thurston's Virtual Haken Conjecture \cite{AgolHaken}. As such, 3-manifolds have an abundance of finite covers. Since we are only focusing on the fundamental groups, we always assume that compact 3-manifolds stated in this paper do not have boundary spheres. 

In their influential papers, Wilton--Zalesskii showed that profinite completions of 3-manifold groups detect geometric decompositions. The following theorem serves as a brief conclusion of their results, see also \cite{WilkesJSJ} for generalizations to 3-manifolds with boundary. 
\begin{theorem}[\cite{WZ17,WZ17b,WZ2}]\label{thm: Wilton-Zalesskii}
For a compact orientable 3-manifold $M$ with empty or toral boundary, the isomorphism type of $\hatpi(M)$ determines whether $M$ is geometric in the sense of Thurston. It determines the geometry type when $M$ is geometric, and determines the prime decomposition and the JSJ-decomposition when $M$ is non-geometric.
\end{theorem}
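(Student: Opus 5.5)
The statement is a compilation of results of Wilton--Zalesskii (and Wilkes for the bounded case), so the plan is to assemble it from three separate inputs rather than prove it from scratch. Throughout, $M$ and $N$ are compact orientable 3-manifolds with empty or toral boundary and $\hatpi(M)\cong\hatpi(N)$. We want to show that every item in the statement agrees for $M$ and $N$.

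\textbf{Step 1: prime decomposition.} Use the Kurosh-type theorem for profinite free products. By the sphere theorem and Kneser--Milnor, $\pi_1(M)$ is the free product of the groups of its prime summands. Profinite completion turns this into the profinite free product of the completions, and the summand completions are freely indecomposable as profinite groups. Uniqueness of profinite free decompositions up to conjugacy (Wilton--Zalesskii, building on Zalesskii's profinite Bass--Serre theory) then matches the prime pieces of $M$ and $N$, with isomorphic profinite completions on each matched pair.

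\textbf{Step 2: reduce to irreducible pieces and detect geometricity.} It is now enough to treat irreducible pieces. Here the tool is the action of $\hatpi(M)$ on the profinite Bass--Serre tree of the JSJ decomposition. One shows that this splitting is \emph{efficient}, meaning:
\begin{itemize}
\item[(i)] vertex and edge groups are separable;
\item[(ii)] the induced profinite topologies are full;
\item[(iii)] the resulting graph of profinite groups is injective.
\end{itemize}
This uses the virtual specialness of Agol and Wise together with Hamilton--Wilton--Zalesskii. From an intrinsic profinite characterization of the JSJ tori, one concludes that $M$ is geometric exactly when $\hatpi(M)$ admits no nontrivial such splitting with $\widehat{\Z}^2$ edge groups. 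Equivalently, in the non-geometric case, $\hatpi(M)$ contains pairs of non-commensurable maximal $\widehat{\Z}^2$ subgroups that are not detected by a single Seifert piece.

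\textbf{Step 3: recover the JSJ graph.} The JSJ graph, with its vertex completions, is then recovered from the profinite tree as the canonical decomposition up to isomorphism of graphs of profinite groups.

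\textbf{Step 4: geometry type in the geometric case.} Distinguish the eight geometries by profinite invariants:
\begin{itemize}
\item finiteness, which gives spherical geometry;
\item virtual abelianness or virtual nilpotence, which give Euclidean and Nil;
\item virtual solvability, which gives Sol;
\item existence of a normal procyclic subgroup, which gives the Seifert geometries. These are then separated by the Euler number and base orbifold Euler characteristic, read off from finite covers and the first Betti numbers of finite-index subgroups;
\item absence of all of the above, which gives hyperbolic geometry. This is certified by goodness and by the absence of nontrivial normal procyclic subgroups in virtually special groups.
\end{itemize}

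\textbf{Main obstacle.} The main obstacle is Step 2. Showing that the JSJ splitting is efficient and that the $\widehat{\Z}^2$ edge groups are intrinsically recognized needs separability of peripheral subgroups and conjugacy-type control in $\hatpi$. In the bounded case one also needs Wilkes' extension, which handles the peripheral structure through Dehn filling arguments.
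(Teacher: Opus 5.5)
At the top level you match the paper, which gives no argument of its own and simply quotes Wilton--Zalesskii, with Wilkes for toral boundary. But the one substantive claim you add, in Step 2, is false, and the geometricity clause of the statement rests on it.

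\emph{Geometric manifolds do split over $\widehat{\Z}^2$.}
\begin{enumerate}
\item The group of a Sol torus bundle is an HNN extension of the fibre group $\Z^2$ over itself.
\item $\Sigma_g\times S^1$, $T^3$, and any Seifert fibred space containing an essential vertical torus split over $\Z^2$ along that torus.
\end{enumerate}
These groups are LERF, so the splittings are efficient. Their completions are therefore nontrivial profinite graphs of groups with $\widehat{\Z}^2$ edge groups. Hence ``no nontrivial splitting with $\widehat{\Z}^2$ edge groups'' does not characterise geometric manifolds.

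If you mean only the JSJ splitting, the claim still fails for Sol manifolds. There the fibre torus, or the torus separating two twisted $I$-bundles over the Klein bottle, is a JSJ torus. That reading also presupposes the intrinsic recognition of JSJ tori, as opposed to vertical tori inside Seifert pieces, which is the hard content of the cited papers. Your clause about non-commensurable maximal $\widehat{\Z}^2$ subgroups is too vague to supply it.

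\emph{Consequence for Step 4.} ``Absence of all of the above'' does not separate hyperbolic manifolds from graph or mixed manifolds. Their completions are also infinite, not virtually soluble, and have no nontrivial virtually central normal procyclic subgroup. Neither goodness nor the absence of normal procyclic subgroups tells the two apart.

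\emph{What is missing.} The key ingredient is the profinite Tits alternative of Wilton--Zalesskii. In particular, the completion of a closed hyperbolic $3$-manifold group contains no subgroup isomorphic to $\Z_p\times\Z_p$; this is proved via virtual specialness. By contrast, every closed irreducible non-hyperbolic $M$ with infinite $\pi_1(M)$ contains $\Z^2$. Its closure in $\hatpi(M)$ is $\widehat{\Z}^2$, because abelian subgroups of $3$-manifold groups are separable.

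After that, Sol is recognised by virtual solubility. Seifert fibred versus non-geometric is decided in one of two ways:
\begin{enumerate}
\item show directly that non-geometric completions have no nontrivial virtually central normal procyclic subgroup, using acylindricity of the action on the profinite JSJ tree; or
\item derive it as a corollary of the profinite invariance of the JSJ graph.
\end{enumerate}
In the second case your Step 3 carries the whole weight, and it cannot be derived from the criterion of Step 2.
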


Profinite rigidity for seven of the eight Thurston geometries is well understood, see \cite{Fun13} for $Sol$ geometry and \cite{Hem14,Wil17} for Seifert fibered spaces. The question of profinite rigidity for hyperbolic 3-manifold groups remains.

\subsection{Cohomology of profinite groups}
The standard reference for cohomology theory of profinite groups is \cite[Chapter 6]{RZ10}.  A complete review of profinite cohomology theory is beyond the scope of this article, so we will only present a brief introduction here. 

For a profinite group $G$, the profinite cohomology of $G$ -- denoted by $\H^\ast(G;-)$ -- is the continuous cohomology of $G$ usually defined for coefficients in discrete $G$-modules, and the profinite homology of $G$ -- denoted by $\H_\ast(G;-)$ -- is the continuous homology of $G$ usually defined for coefficients in profinite $G$-modules, where the modules are equipped with continuous $G$-actions. One may take the following properties as equivalent definitions for profinite (co)homology:
\begin{enumerate}[label=(\arabic*),leftmargin=*]
\item for any discrete $G$-module $A$, $\H^k(G;A)=\varinjlim_U H^k(G/U;\mathrm{Fix}_U A)$,
\item for any profinite $G$-module $B$, $\H_k(G;B)=\varprojlim_U H_k(G/U; \mathrm{Cofix}_U B)$,
\end{enumerate}
where $U$ ranges through all open normal subgroups in $G$ in both statements, see \cite[Corollary 6.5.6 and Corollary 6.5.8]{RZ10}.

Let $\Phi: G_1\to G_2$ be a homomorphism of profinite groups. Then, $\Phi$ induces homomorphisms $\Phi^\ast: \H^\ast(G_2;A)\to \H^\ast(G_1;A)$ for every discrete $G_2$-module $A$, and $\Phi_\ast: \H_\ast(G_1;B)\to \H_\ast(G_2;B)$ for every profinite $G_2$-module $B$.

Given a homomorphism $\phi: \Gamma\to G$ from an abstract group $\Gamma$ to a profinite group $G$, $\phi$ also induces a  homomorphism  $\phi^\ast: \H^\ast(G;A)\to H^\ast(\Gamma;A)$ for every discrete $G$-module $A$. We also note that any finite discrete $\Gamma$-module is naturally a $\widehat{\Gamma}$-module. 
\begin{definition}
An abstract group $\Gamma$ is {\em cohomologically good} if for every finite $\Gamma$-module $M$, the homomorphism $i_\Gamma^\ast: \H^\ast(\widehat{\Gamma};M)\to H^\ast(\Gamma;M)$ induced by the canonical homomorphism is an isomorphism. 
\end{definition}

The concept of cohomological goodness was first introduced by Serre in \cite{cohomologyGalois}. In this paper, we mainly apply this concept to virtually surface groups.
\begin{proposition}[{\cite[Lemma 3.2 and Proposition 3.6]{GJZ}}]\label{prop: goodness}
Virtually surface groups are cohomologically good. 
\end{proposition}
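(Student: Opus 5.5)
The plan is to reduce to honest surface groups using two standard facts about goodness from Serre's framework, and then to check surface groups directly. The first fact is \emph{Serre's criterion}. A finitely generated group $\Gamma$ is cohomologically good if and only if, for every finite $\Gamma$-module $M$, every $n\ge 1$ and every class $x\in H^n(\Gamma;M)$, there is a finite-index subgroup $\Gamma'\le\Gamma$ with $x|_{\Gamma'}=0$. This follows by comparing $H^n(\Gamma;M)$ with $\H^n(\widehat{\Gamma};M)=\varinjlim_U H^n(\Gamma/U;M^U)$. One runs an induction on $n$ using the coinduced modules $\mathrm{Coind}_{U}^{\Gamma}M$, which are finite because $U$ has finite index.

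The second fact is that \emph{goodness is invariant under commensurability}: if $\Gamma'\le\Gamma$ has finite index, then $\Gamma$ is good if and only if $\Gamma'$ is good. This uses Shapiro's lemma on both sides: $H^\ast(\Gamma';M)\cong H^\ast(\Gamma;\mathrm{Coind}_{\Gamma'}^{\Gamma}M)$, and the same holds for $\widehat{\Gamma'}=\overline{\Gamma'}\le\widehat{\Gamma}$. The coinduced module is still finite, and the isomorphisms are compatible with $i_\Gamma^\ast$, so goodness transfers in both directions. With this in hand it suffices to treat a finite-index subgroup that is a surface group. Passing to the orientation double cover if needed, we may assume it is the fundamental group of a compact orientable surface $S$.

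Next we handle the case $\partial S\neq\emptyset$ or $S$ noncompact-type, where $\pi_1(S)=F$ is free of finite rank. Here $H^n(F;M)=0$ for $n\ge 2$, so Serre's criterion only needs checking in degree $1$. In degree $1$, $H^1(\Gamma;M)$ is the group of crossed homomorphisms modulo principal ones. Since $M$ is finite, every crossed homomorphism factors through a finite quotient of $\Gamma$, namely the kernel of the associated map $\Gamma\to M\rtimes\mathrm{Aut}(M)$. Its restriction to that finite-index subgroup is a homomorphism to a module on which the subgroup acts trivially and which vanishes there. So degree $1$ is automatic for every finitely generated group.

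For closed orientable $S$ of genus $g\ge 1$ (genus $0$ is trivial), only degree $2$ remains, since $H^n=0$ for $n\ge 3$. We first pass to a finite-index subgroup acting trivially on $M$. Then $H^2(\pi_1 S;M)\cong M$ via Poincar\'e duality. For a degree-$d$ cover $S'\to S$, restriction $H^2(S;M)\to H^2(S';M)$ is multiplication by $d$ under these identifications, because the fundamental class pulls back to $d$ times the fundamental class. Choosing a cover of degree $d=|M|$, which exists since $H_1(S)\cong\Z^{2g}$ surjects onto $\Z/d$, kills every class. Serre's criterion then gives goodness.

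The main obstacle is proving Serre's criterion carefully, in particular the induction identifying $\varinjlim_U H^n(\Gamma/U;M^U)$ with $H^n(\Gamma;M)$ via the vanishing condition. Everything else is a short computation. Since this is classical (Serre, \emph{Galois Cohomology}, I.2.6, Exercise 2), I would cite it and give only the dimension-shifting outline, as \cite{GJZ} does.
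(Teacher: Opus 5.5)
Your proposal is correct and has the same two-step structure as the argument the paper cites from \cite{GJZ}. Invariance of goodness under finite index reduces everything to surface groups, which you then check directly via Serre's criterion: degree $1$ is automatic, and in degree $2$, after passing to a subgroup acting trivially on $M$, every class dies in a cover of degree $|M|$.

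One point to tighten: the direction you actually need ($\Gamma'$ good implies $\Gamma$ good) does not follow from Shapiro's lemma alone, since $M$ is generally not a direct summand of $\mathrm{Coind}_{\Gamma'}^{\Gamma}M$. It is, however, immediate from Serre's criterion: restrict the class to $\Gamma'$, then kill it in a finite-index subgroup of $\Gamma'$.
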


\section{Dehn filling}

\subsection{Peripheral regularity}
\begin{definition}
Suppose $M$ and $N$ are compact orientable 3-manifolds with toral boundary, and $\Phi: \hatpi(M)\to \hatpi(N)$ is an isomorphism. We say that $\Phi$ is {\em peripheral regular} if there exists a homeomorphism $h: \partial M\to \partial N$, for which we denote as $h_i: \partial_iM \to \partial_iN$ on each component, with the following properties. For each $i$, there exists $g_i\in \hatpi(N)$ such that the following diagram commutes.
\begin{equation*}
\begin{tikzcd}
\hatpi(\partial_iM) \arrow[rr, "\widehat{{h_i}_\ast}"] \arrow[d,"\widehat{\text{incl}_{\ast}}"]& &\hatpi(\partial_iN)\arrow[d,"\widehat{\text{incl}_{\ast}}"]\\
\hatpi(M) \arrow[r,"\Phi"] &\hatpi(N) \arrow[r,"C_{g_i}"] &\hatpi(N)
\end{tikzcd}
\end{equation*}
Here $C_{g_i}(x)=g_ixg_i^{-1}$ denotes the conjugation by $g_i$ on the left. 
To specify the homeomorphism, we also say that $\Phi$ is peripheral regular with respect to $h$.
\end{definition}

\begin{theorem}
\label{thm: peripheral regular}
Suppose $M$ and $N$ are oriented cusped hyperbolic 3-manifolds. Then, any isomorphism $\Phi: \hatpi(M)\to \hatpi(N)$ is peripheral regular with respect to some $h_\Phi: \partial M \xrightarrow{\cong} \partial N$.  The homeomorphism $h_{\Phi}$ is unique up to isotopy, and $h_{\Phi}$ is either orientation preserving on all components or orientation reversing on all components.
\end{theorem}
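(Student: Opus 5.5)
The plan is to take the peripheral structure detected by the profinite completion and promote it to an honest homeomorphism of tori using cohomological goodness. The ingredients are Wilton--Zalesskii's peripheral detection for hyperbolic manifolds, the structure of $\widehat{\Z}^2$, and a homological orientation argument.

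\emph{Step 1: matching cusps.} By the Wilton--Zalesskii results (\autoref{thm: Wilton-Zalesskii} and the accompanying peripheral detection in \cite{WZ17,WZ17b}), $\Phi$ maps each peripheral subgroup $\overline{\pi_1(\partial_iM)}\cong\widehat{\Z}^2$ onto a conjugate of some $\overline{\pi_1(\partial_{\sigma(i)}N)}$. Here $\sigma$ is a bijection of boundary components, and the peripheral subgroups are exactly the maximal $\widehat{\Z}^2$ subgroups up to conjugacy. Composing with $C_{g_i}$, we get isomorphisms $\psi_i:\widehat{\Z}^2\to\widehat{\Z}^2$, which are given by matrices in $\GL_2(\widehat{\Z})$.

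\emph{Step 2: the $\psi_i$ come from $\GL_2(\Z)$.} This is the main step. We need each $\psi_i$ to be $\widehat{A_i}$ for some $A_i\in\GL_2(\Z)$, possibly multiplied by a scalar $\lambda\in\Zx$. In general $\psi_i$ is only known to be $\lambda\cdot\widehat{A_i}$, so the real content is that $\lambda=\pm1$ and that the $\widehat{A_i}$ are integral.

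I would first try the argument used for Dehn filling rigidity (as in the second author's earlier work). Use cusped hyperbolic Dehn fillings and Thurston's hyperbolic Dehn filling theorem. The profinite completion detects which slopes $\alpha$ give fillings with finite, or abelian, or otherwise distinguished completion, and more robustly it detects the quotient $\widehat{\pi_1}(M)/\langle\!\langle\alpha\rangle\!\rangle$. Since there are only finitely many exceptional slopes while the Dehn-surgery quotients vary, $\psi_i$ must carry primitive integral slopes to $\Zx$-multiples of primitive integral slopes. An automorphism of $\widehat{\Z}^2$ that sends infinitely many rational lines to rational lines is of the form $\lambda\widehat{A}$ with $A\in\GL_2(\Z)$ and $\lambda\in\Zx$.

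\emph{Step 3: forcing $\lambda=\pm1$.} I would use cohomology. $M$ and $N$ are aspherical, with pairs $(\pi_1M,\{\pi_1\partial_iM\})$ satisfying relative Poincaré duality, and these groups are cohomologically good (Agol, together with \autoref{prop: goodness} for the boundary tori). Hence $\Phi$ induces an isomorphism on $\H^3(\widehat{\pi_1}M,\partial;\widehat{\Z})\cong\widehat{\Z}$. Concretely, this is multiplication by a unit $\kappa\in\Zx$ on the fundamental class.

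Restricting along the connecting map to the boundary tori, $\kappa$ must agree with the determinant of each $\psi_i$ on $\H_2(\widehat{\Z}^2;\widehat{\Z})$. That determinant is $\lambda^2\det A_i$, so $\kappa=\lambda^2\det A_i$ for every $i$.

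A further argument is needed to get $\lambda=\pm1$. I would use the homological longitude: the kernel of $H_1(\partial M)\to H_1(M;\Q)$ is integral, and $\Phi$ must preserve its closure. Combining this with the linking pairing on the torsion of the boundary-filled homology, or with the fact that $\lambda$ must be compatible with an integral structure on $H_1$ twisted by finite covers, pins $\lambda$ down to $\pm1$. This $\lambda$ issue is where I expect the most difficulty.

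\emph{Step 4: conclusion.} Once $\psi_i=\pm\widehat{A_i}$ with $A_i\in\GL_2(\Z)$, realize $\pm A_i$ by a homeomorphism $h_i:\partial_iM\to\partial_{\sigma(i)}N$. This gives peripheral regularity.

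Uniqueness up to isotopy holds because the abstract group homomorphism is determined by its completion: $\Z^2$ embeds in $\widehat{\Z}^2$. It also uses that the normalizer of the peripheral closure in $\widehat{\pi_1}(N)$ equals the subgroup itself (malnormality, which holds profinitely by Wilton--Zalesskii), so the choice of $g_i$ does not change $\psi_i$.

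Finally, each $\det(\pm A_i)=\det A_i$ equals the same $\kappa/\lambda^2=\kappa=\pm1$, independent of $i$. So the $h_i$ either all preserve orientation or all reverse it.
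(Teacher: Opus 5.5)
Your uniqueness argument (malnormality of the peripheral closures, \cite[Lemma 4.5]{WZ17}) and your orientation argument (comparing the degree $\kappa$ with $\det\psi_i$ through the boundary map, as in \cite[Proposition 7.1]{Xu25}) match the paper. The gap is the existence of $h_\Phi$, i.e.\ that each $\psi_i$ equals $\pm\widehat{A_i}$ with $A_i\in\GL_2(\Z)$. This is the whole content of the theorem. The paper does not prove it but imports it from \cite[Theorem 1.4]{Xu25}, and your Steps 2--3 do not supply it.

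In Step 2, nothing forces $\psi_i$ to carry integral slopes to rational lines. You cannot use the Dehn filling alignment (\autoref{thm: dehn filling}), because it has peripheral regularity as a hypothesis. The isomorphism $\hatpi(M)/\overline{\langle\!\langle\alpha\rangle\!\rangle}\cong\hatpi(N)/\overline{\langle\!\langle\psi_i(\alpha)\rangle\!\rangle}$ is automatic. But when $\psi_i(\alpha)\notin\Zx\cdot\Z^2$, the right-hand side is not a priori the completion of any filling of $N$, and finiteness of exceptional slopes says nothing about this. The linear algebra at the end of Step 2 is fine; its input is the hard part. The known route to $\psi_i=\lambda\widehat{A_i}$ goes through deep input such as Liu's $\Zx$-regularity theorem for finite-volume hyperbolic 3-manifolds.

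Step 3 is, as you suspect, not carried out, and the tools you propose cannot see $\lambda$. The relation $\kappa=\lambda^2\det A_i$ alone is consistent with every $\lambda\in\Zx$, so you need an independent relation between $\kappa$ and $\lambda$.
\begin{itemize}
\item The homological longitude gives none, since $\lambda\widehat{A}$ preserves the closure of every integral subgroup.
\item The linking pairing gives none either, at least naively. For a peripheral class $x$ in a rational-homology-sphere filling, $\lk_N(\lambda Ax,\lambda Ax)=\lambda^2\lk_N(Ax,Ax)$ must equal $\kappa\,\lk_M(x,x)=\lambda^2\det A\,\lk_M(x,x)$, and $\lambda^2$ cancels.
\end{itemize}
Without $\lambda=\pm1$ you only have a peripheral $\Zx$-regular isomorphism, which does not yield a homeomorphism $h_\Phi$ of the boundary tori. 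Getting $\lambda=\pm1$ on the cusps is exactly what \cite[Theorem 1.4]{Xu25} provides. Citing it, as the paper does, is what your argument is missing.
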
 
\begin{proof}
    The peripheral regularity of $\Phi$ follows from \cite[Theorem 1.4]{Xu25}. The uniqueness of $h_{\Phi}$ follows directly from the malnormality of the closures of the peripheral subgroups $\{\overline{\pi_1(\partial_1N)},\cdots, \overline{\pi_1(\partial_iN)}\}$ in $\hatpi(N)$ \cite[Lemma 4.5]{WZ17}. And the fact that $h_\Phi$ is orientation-preserving on all components or orientation reversing on all components follows from \cite[Proposition 7.1]{Xu25}.
\end{proof}

We say that the isomorphism $\Phi:\hatpi(M)\to \hatpi(N)$ is \textit{orientation preserving} (resp. \textit{orientation reversing}) if the homeomorphism $h_\Phi:\partial M\to \partial N$ is orientation preserving (resp. orientation reversing). In fact, this is equivalent to the alternative that the profinite mapping degree $\mathbf{deg}(\Phi)$ equals $1$ or $-1$, see \cite[Proposition 7.1]{Xu25}.  

\subsection{Aligning Dehn fillings}
Let $M$ be a compact  3-manifold with non-empty boundary consisting of tori $\partial_1M, \cdots, \partial_n M$. The {\em slopes} on $\partial_i M$ is defined as 
$$
\slope(\partial_iM)= \{\text{free homotopy classes of  unoriented essential simple closed curves on }\partial_iM \}.
$$
Let $\slp(\partial_iM)=\slope(\partial_iM)\cup\{\varnothing\}$, where the symbol $\varnothing$ denotes an `empty slope', and let $\slp(\partial M)= \slp(\partial_1M)\times \cdots \times \slp(\partial_nM)$. 

\def\c{\mathbf{c}}

\begin{definition}
Let $M$ be a compact   3-manifold with non-empty boundary consisting of $n$ tori. For $\c=(c_1,\cdots, c_n)\in \slp(\partial M)$, {\em the Dehn filling of $M$ along $\c$}, denoted by $M_{\c}$, is a compact 3-manifold constructed from $M$ by gluing solid tori to the boundary components $\partial_iM$ where $c_i\neq \varnothing$, so that the meridians of the solid tori are attached to the slopes $c_i\in \slope(\partial_iM)$. 
\end{definition}
\begin{theorem}[{\cite[Theorem A]{Xu24}}]\label{thm: dehn filling}
Let $M$ and $N$ be compact orientable 3-manifolds with toral boundary, and let $\Phi: \hatpi(M)\to \hatpi(N)$ be an isomorphism which is peripheral regular with respect to $h:\partial M\to \partial N$. Then, for any choice of slopes $\mathbf{c}$ on $\partial M$, there is an isomorphism $\Psi: \hatpi (M_{\mathbf{c}})\to \hatpi  (N_{h(\mathbf{c})})$ that fits into the following commutative diagram.
\begin{equation*}
\begin{tikzcd}
\hatpi(M)\arrow[r, "\Phi"]\arrow[d,"\widehat{\text{incl}_{\ast}}"] & \hatpi(N) \arrow[d,"\widehat{\text{incl}_{\ast}}"]\\
\hatpi (M_{\mathbf{c}}) \arrow[r,"\Psi"] &  \hatpi  (N_{h(\mathbf{c})})
\end{tikzcd}
\end{equation*} 
\end{theorem}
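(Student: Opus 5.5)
The plan is to build $\Psi$ by passing to quotients. The fundamental group of $M_{\mathbf c}$ is obtained from $\pi_1(M)$ by killing normal closures of loops, and profinite completion behaves well with respect to such quotients.

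First, the quotient description. By van Kampen, $\pi_1(M_{\mathbf c})\cong \pi_1(M)/\langle\langle \gamma_i : c_i\neq\varnothing\rangle\rangle$. Here $\gamma_i\in\pi_1(\partial_iM)\subseteq\pi_1(M)$ is a representative of the slope $c_i$, defined after choosing a basepath to $\partial_iM$. The same holds for $N_{h(\mathbf c)}$, with the elements $h_{i\ast}(\gamma_i)\in \pi_1(\partial_iN)$.

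Second, completions of quotients. For a finitely generated group $\Gamma$ and a normal subgroup $K=\langle\langle S\rangle\rangle$, right exactness of profinite completion gives
\[
\widehat{\Gamma/K}\cong \widehat{\Gamma}/\overline{\langle\langle i_\Gamma(S)\rangle\rangle}.
\]
To see this, note that finite quotients of $\Gamma/K$ are exactly the finite quotients of $\Gamma$ that kill $S$. These in turn are exactly the continuous finite quotients of $\widehat\Gamma$ that kill the closed normal subgroup topologically generated by the image of $S$. Hence
\[
\hatpi(M_{\mathbf c})\cong \hatpi(M)/\overline{\langle\langle \gamma_i\rangle\rangle},
\]
and the map $\widehat{\text{incl}_\ast}$ is the quotient map. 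The analogous statement holds for $N$.

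Third, matching the kernels, which is the main point. Peripheral regularity says that the closure of $\pi_1(\partial_iM)$ in $\hatpi(M)$ satisfies
\[
\Phi(x)=g_i^{-1}\,\widehat{h_{i\ast}}(x)\,g_i \quad\text{on } \overline{\pi_1(\partial_iM)}.
\]
In particular $\Phi(\gamma_i)$ is conjugate in $\hatpi(N)$ to $h_{i\ast}(\gamma_i)$. One subtlety needs care here: the slope is unoriented, and the choice of basepath changes $\gamma_i$ by a conjugation in $\pi_1(M)$ and possibly by inversion. Neither change affects the normal closure, and $h$ carries the basepath choices consistently up to conjugation. It follows that
\[
\Phi\bigl(\overline{\langle\langle \gamma_i\rangle\rangle}\bigr)=\overline{\langle\langle h_{i\ast}(\gamma_i)\rangle\rangle}.
\]
This uses the fact that $\Phi$ is a homeomorphism, so it maps closed normal closures to closed normal closures.

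Finally, $\Phi$ descends to an isomorphism $\Psi$ of the quotients. The square commutes by construction, because both vertical maps are the quotient maps identified in the second step.

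The main obstacle is the second step: identifying $\widehat{\text{incl}_\ast}$ with the quotient by the closed normal closure. In particular, one must check that the topological closure is the correct kernel. I would verify this via the universal property of the profinite completion, using that open normal subgroups of $\widehat{\Gamma}$ containing $S$ correspond to finite-index normal subgroups of $\Gamma$ containing $K$.
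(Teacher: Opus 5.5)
Your proposal is correct and is essentially the argument behind \cite[Theorem A]{Xu24}, which the paper quotes without proof. Van Kampen writes $\pi_1(M_{\mathbf c})$ as $\pi_1(M)$ modulo the normal closure of the filling slopes; right exactness of profinite completion identifies $\widehat{\text{incl}_\ast}$ with the quotient of $\hatpi(M)$ by the closed normal closure of those slopes; and peripheral regularity (each $\Phi(\gamma_i)$ is a $\hatpi(N)$-conjugate of $h_{i\ast}(\gamma_i)$) makes $\Phi$ carry one closed normal closure onto the other, so $\Phi$ descends to $\Psi$.
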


\section{Link complements}
Fix an orientation of the ambient space $S^3$  throughout this paper. 
By a link $L$ in $S^3$, we always mean a tame link. We denote by $n(L)$ the open regular neighborhood of $L$, and by $N(L)$ the closure of $n(L)$. Finally,  $\X{L}=S^3\setminus n(L)$ denotes  the  compact exterior of $L$. 

A link $L$ is {\em component-ordered} if we fix an ordering of its components as $L=K_1\cups K_n$. For a component-ordered link $L=K_1\cups K_n$, the \textit{linking matrix} $\lk(L)$ is an $n\times n$ integral matrix with entries $\lk(L)_{i,j}=\lk(K_i,K_j)$ when $i\neq j$, and $\lk(L)_{i,i}=0$.

Suppose $L=K_1\cups K_n$ is an oriented link  in $S^3$. 
For each $1\le i\le n$, let $(m_i,l_i)$ be the meridian-longitude basis of $\pi_1(\partial N(K_i))$ such that $l_i$ follows the orientation of $K_i$, and $(m_i,l_i)$ is positively oriented with respect to the boundary orientation of $N(K_i)$ inherited from $S^3$. We also identify $(m_i,l_i)$ with their images in $\pi_1(\X{L})$ through the homomorphism $\pi_1(\partial N(K_i))\to \pi_1(\X{L})$ up to a choice of basepoints, and regard them as the {\em preferred meridian-longitude basis} for $L$. Then,  $m_i$ and $l_i$ generate a subgroup isomorphic to $\Z$ or $\Z^2$ in $\pi_1(\X{L})$ depending on whether $K_i$ is a splitted unknotted component. 

\subsection{Perfect isomorphism}

\begin{definition}
Let $L=K_1\cups K_n$ and $L'=K_1'\cups K_n'$ be two component-ordered  oriented  links  in $S^3$, and let $(m_i,l_i)$ and $(m_i',l_i')$ be the preferred meridian-longitude bases for $L$ and $L'$.  An isomorphism  $\Phi: \hatpi(\X L)\to \hatpi(\X{L'})$  is called {\em a perfect isomorphism} if $\Phi$ is peripheral regular with respect to a homeomorphism $h:\partial (\X L)\to \partial(\X{L'})$ such that $h(m_i)=m_i'$ and $h(l_i)=l_i'$.
\end{definition}

\begin{proposition}\label{prop: perfect linking number}
Suppose $L=K_1\cups K_n$ and $L'=K_1'\cups K_n'$ are two component-ordered oriented   links  in $S^3$,  and  $\Phi: \hatpi(\X L)\to \hatpi(\X{L'})$  is a perfect isomorphism. Then $\lk(L)=\lk(L')$. 
\end{proposition}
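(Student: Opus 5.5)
The plan is to read off linking numbers from the images of longitudes in the abelianization, and to show that a perfect isomorphism respects this data up to a unit of $\widehat{\Z}$ which must be trivial. Write $H_1(\X L;\Z)\cong \Z^n$ with the free basis given by the meridian classes $[m_1],\dots,[m_n]$ (Alexander duality). In this basis the longitude $l_i$ has homology class
\[
[l_i]=\sum_{j\neq i}\lk(K_i,K_j)\,[m_j].
\]
So the $i$-th row of $\lk(L)$ is exactly the coordinate vector of $[l_i]$ in the meridian basis. The same holds for $L'$.

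\textbf{Step 1: pass to abelianizations.} Since $\widehat{\pi_1(\X L)}^{ab}\cong \widehat{H_1(\X L)}\cong \widehat{\Z}^n$, the isomorphism $\Phi$ induces an isomorphism $\Phi_\ast:\widehat{\Z}^n\to\widehat{\Z}^n$. Conjugation acts trivially on abelianizations, so the elements $g_i$ in peripheral regularity disappear. The commutative square for each boundary component then gives
\[
\Phi_\ast[m_i]=[m_i'] \quad\text{and}\quad \Phi_\ast[l_i]=[l_i'] \quad\text{in } \widehat{\Z}^n.
\]

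\textbf{Step 2: determine $\Phi_\ast$.} The classes $[m_i]$ form a $\widehat{\Z}$-basis of $\widehat{H_1(\X L)}$, and the $[m_i']$ form one for $L'$. Hence $\Phi_\ast$ is the $\widehat{\Z}$-linear map sending the $i$-th basis vector to the $i$-th basis vector, so it is the identity matrix in these coordinates. Continuity and $\widehat{\Z}$-linearity hold automatically, because a continuous homomorphism of profinite abelian groups is $\widehat{\Z}$-linear.

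\textbf{Step 3: compare coordinates.} Applying $\Phi_\ast$ to the formula for $[l_i]$ gives
\[
\sum_j \lk(K_i,K_j)[m_j']=[l_i']=\sum_j \lk(K_i',K_j')[m_j'].
\]
These coordinates are integers, and $\Z\to\widehat{\Z}$ is injective, so comparing coefficients yields $\lk(K_i,K_j)=\lk(K_i',K_j')$ for all $i\neq j$. Hence $\lk(L)=\lk(L')$.

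The only delicate point is Step 1, namely checking that the peripheral diagram really forces $\Phi_\ast$ to send $[m_i]$ to $[m_i']$ and $[l_i]$ to $[l_i']$ exactly, not just up to sign or a unit. This is precisely where the perfect hypothesis, $h(m_i)=m_i'$ and $h(l_i)=l_i'$, is used. Split unknotted components cause no trouble, since the homology computation does not require the peripheral subgroup to be $\Z^2$.
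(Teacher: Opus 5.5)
Your proposal is correct and follows essentially the same route as the paper's proof. You pass to the profinite abelianization, which is free over $\widehat{\Z}$ on the meridian classes; you note that conjugation disappears there, so $[m_i]\mapsto[m_i']$ and $[l_i]\mapsto[l_i']$; and you compare the integer coefficients of $[l_i']$ using injectivity of $\Z\to\widehat{\Z}$.
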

\begin{sloppypar}
\begin{proof}
For a fixed index $1\le j \le n$, we show that $\lk(K_i,K_j)=\lk(K_i',K_j')$ for any $i\neq j$. 
Note that $H_1(\X{L})$ is the free $\Z$-module over the basis $\{[m_1],\cdots, [m_n]\}$, and by our choice of orientation, $[l_j]=\sum_{i\neq j}\lk(K_i,K_j)[m_i]$ in $H_1(\X{L})$. Similarly, $H_1(\X{L'})$ is the free $\Z$-module over the basis $\{[m_1'],\cdots, [m_n']\}$, and $[l_j']=\sum_{i\neq j}\lk(K_i',K_j')[m_i]$. The profinite abelianization $\hatpi(\X L)^{\mathrm{Ab}}$ is isomorphic to $\hatH (\X{L}) $, which is the free $\widehat{\Z}$-module over the basis $\{[m_1],\cdots, [m_n]\}$, and similarly, $\hatpi(\X {L'})^{\mathrm{Ab}}\cong \hatH (\X{L'}) $ is the free $\widehat{\Z}$-module over the basis $\{[m_1'],\cdots, [m_n']\}$. 

By definition, $\Phi$ sends each $m_i$ or $l_i$ to a conjugate of $m_i'$ or $l_i'$. Thus, the profinite abelianization $\Phi^{\mathrm{Ab}}: \hatH (\X{L}) \to \hatH (\X{L'}) $ sends each $[m_i]$ or $[l_i]$ to $[m_i']$ or $[l_i']$. In particular, $\sum_{i\neq j}\lk(K_i,K_j)[m_i']=\Phi^{\mathrm{Ab}}(\sum_{i\neq j}\lk(K_i,K_j)[m_i])= \Phi^{\mathrm{Ab}}([l_j])=[l_j']=\sum_{i\neq j}\lk(K_i',K_j')[m_i']$ in the free $\widehat{\Z}$-module $\hatH (\X{L'}) $. Thus, $\lk(K_i,K_j)=\lk(K_i',K_j')$ as profinite integers, and hence $\lk(K_i,K_j)=\lk(K_i',K_j')$ as integers for all $i\neq j$.  
\end{proof}
\end{sloppypar}

A link $L$ in $S^3$ is called a {\em hyperbolic link} if $S^3\setminus N(L)$ is a finite-volume hyperbolic 3-manifold. 

\begin{lemma}\label{lem: criteria for perfect}
Suppose $L=K_1\cups K_n$ and $L'=K_1'\cups K_n'$ are two component-orderd oriented hyperbolic links  in $S^3$, and suppose $\Phi: \hatpi(\X{L})\to \hatpi(\X{L'})$ is an isomorphism, which is peripheral regular with respect to $h_{\Phi}:\partial (\X{L})\to \partial (\X{L'})$ according to \autoref{thm: peripheral regular}.  Then $\Phi$ is a  perfect isomorphism if $\Phi$ is orientation preserving and $h_\Phi(m_i)=m_i'$ for each $1\le i \le n$.
\end{lemma}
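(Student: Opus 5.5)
We must show that $h_\Phi(l_i)=l_i'$ for each $i$. Since $h_\Phi$ is a homeomorphism of tori with $h_\Phi(m_i)=m_i'$, the induced map on $\pi_1(\partial_i)\cong\Z^2$ sends $m_i\mapsto m_i'$, so in the bases $(m_i,l_i)$ and $(m_i',l_i')$ it has the form $\begin{pmatrix}1 & a_i\\ 0 & \epsilon_i\end{pmatrix}$ with $\epsilon_i=\pm1$, i.e. $h_\Phi(l_i)=(m_i')^{a_i}(l_i')^{\epsilon_i}$. Orientation preservation forces the determinant to be $+1$. Both bases $(m_i,l_i)$ and $(m_i',l_i')$ are positively oriented with respect to the boundary orientations induced from $S^3$, so $\epsilon_i=1$. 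It therefore remains to show $a_i=0$.

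For this I would pass to profinite abelianizations, as in the proof of \autoref{prop: perfect linking number}. Peripheral regularity means $\Phi$ restricted to $\overline{\pi_1(\partial_i \X L)}$ agrees, up to conjugation in $\hatpi(\X{L'})$, with $\widehat{(h_i)_\ast}$. Conjugation dies in the abelianization, so $\Phi^{\mathrm{Ab}}:\hatH(\X L)\to\hatH(\X{L'})$ sends $[m_j]\mapsto[m_j']$ for every $j$. Both modules are free $\widehat\Z$-modules on the meridians, so $\Phi^{\mathrm{Ab}}$ is the identity matrix in these bases. Then
$$\Phi^{\mathrm{Ab}}([l_i])=\sum_{j\neq i}\lk(K_j,K_i)[m_j'],$$
while also
$$\Phi^{\mathrm{Ab}}([l_i])=[h_\Phi(l_i)]=a_i[m_i']+[l_i']=a_i[m_i']+\sum_{j\neq i}\lk(K_j',K_i')[m_j'].$$
Comparing the $[m_i']$-coefficients in the free $\widehat\Z$-module gives $a_i=0$, since $[l_i]$ and $[l_i']$ have zero coefficient on their own meridian. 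As a byproduct, the other coefficients give equality of linking numbers.

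Hence $h_\Phi(l_i)=l_i'$ for all $i$, and $\Phi$ is perfect. The only delicate point is the sign $\epsilon_i$. One must check that the convention ``$(m_i,l_i)$ is positively oriented with respect to the boundary orientation'' together with ``$h_\Phi$ orientation preserving'' really forces $\det=1$ on the basis change, rather than producing a sign from comparing boundary orientations of $\X L$ versus $N(K_i)$. Since the same convention is used for $L$ and $L'$, any such sign appears on both sides and cancels, so $\epsilon_i=+1$.
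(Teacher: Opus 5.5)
Your proof is correct and takes essentially the same route as the paper. Both arguments compute $\Phi^{\mathrm{Ab}}([l_j])$ in two ways, using peripheral regularity and the matching of meridians. Both then deduce that the $[m_j']$-coefficient of $[h_\Phi(l_j)]$ vanishes, and use orientation preservation to fix the sign of the longitude component.

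The differences are only in ordering and bookkeeping. You fix $\epsilon_i=+1$ via the determinant first and compare coefficients directly in $\widehat{\Z}$. The paper instead descends to $H_1(\X{L'})$ by extension of scalars, deduces $h_\Phi(l_j)=\pm l_j'$ from zero linking number with $K_j'$ and primitivity, and only then invokes orientation.
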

\begin{proof}
By definition, it suffices to show that $h_{\Phi}(l_j)=l_j'$ for each $1\le j \le n$. 
Similar to the proof of \autoref{prop: perfect linking number}, on the abelianization level we have  $\Phi^{\mathrm{Ab}}([l_j])=[h_{\Phi}(l_j)]$ and $\Phi^{\mathrm{Ab}}([m_i])=[h_{\Phi}(m_i)]=[m_i']$. Thus, $[h_{\Phi}(l_j)]=\Phi^{\mathrm{Ab}}([l_j])=\Phi^{\mathrm{Ab}} (\sum_{i\neq j} \lk(K_i,K_j) [m_i]) = \sum_{i\neq j}\lk(K_i,K_j)[m_i']$ in $\hatH (\X{L'}) $. Since  $\hatH (\X{L'}) $ is  an  extension of scalar of the $\Z$-module $H_1(\X{L'})$ via $\Z\hookrightarrow \widehat{\Z}$, we indeed have  $[h_{\Phi}(l_j)]= \sum_{i\neq j}\lk(K_i,K_j)[m_i']$ in $H_1(\X{L'})$. 
In particular, $h_{\Phi}(l_j)$, viewed as a loop in $\X{L'}$, has zero linking number with the knot $K_j'$. Since, $h_{\Phi}(l_j)$ represents a primitive essential simple closed curve on $\partial N(K_j)$, the only possibilities are $h_\Phi(l_j)=\pm l_j$. Finally, $h_{\Phi}(l_j)=l_j'$ since $h_{\Phi}$ is orientation-preserving.  
\end{proof}

The next theorem elaborates on  \cite[Theorem 4.1]{Xu24}.

\begin{theorem}\label{thm: hyperbolic link}
Let $L$ be an $n$-component oriented hyperbolic link in $S^3$. Suppose $N$ is a compact orientable 3-manifold, and  $\Phi: \hatpi(\X{L})\to \hatpi(N)$ is an isomorphism. Then there exists an $n$-component oriented hyperbolic link in $S^3$ such that $N\cong \X{L'}$; and through this homeomorphism, $\Phi: \hatpi(\X{L})\to \hatpi(\X{L'})$ is a perfect isomorphism. 
\end{theorem}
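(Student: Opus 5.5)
Since $\X{L}$ is a finite-volume hyperbolic manifold with $n$ torus cusps, \autoref{thm: Wilton-Zalesskii} applied to $\hatpi(N)\cong\hatpi(\X{L})$ shows that $N$ is geometric and hyperbolic. The number of boundary tori is detected by the conjugacy classes of maximal peripheral $\widehat{\Z}^2$ subgroups, so $N$ is a cusped hyperbolic manifold with $n$ boundary tori. By \autoref{thm: peripheral regular}, $\Phi$ is peripheral regular with respect to a homeomorphism $h_\Phi:\partial\X{L}\to\partial N$, which is orientation preserving on all components or orientation reversing on all components.

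Next I realize $N$ as a link exterior. Let $\mathbf{m}=(m_1,\dots,m_n)$ be the meridian slopes of $L$, so that $(\X{L})_{\mathbf{m}}=S^3$. By \autoref{thm: dehn filling}, $\hatpi(N_{h_\Phi(\mathbf{m})})\cong\hatpi(S^3)=1$. Since 3-manifold groups are residually finite, $\pi_1(N_{h_\Phi(\mathbf{m})})=1$, and by the Poincaré conjecture $N_{h_\Phi(\mathbf{m})}\cong S^3$. The cores of the filling solid tori then form an $n$-component link $L'\subset S^3$ with $N\cong\X{L'}$, and $L'$ is hyperbolic. Under this identification, the meridian of $K_i'$ is exactly $h_\Phi(m_i)$ up to sign, since it is the filling slope. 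We orient each $K_i'$ so that $h_\Phi(m_i)=m_i'$ as oriented curves; this orientation is unconstrained so far, so reversing components is allowed.

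To reach a perfect isomorphism I apply \autoref{lem: criteria for perfect}, which requires $\Phi$ to be orientation preserving. If $h_\Phi$ is orientation reversing, I replace $L'$ by its mirror image $\overline{L'}$. The exterior $\X{\overline{L'}}$ is homeomorphic to $\X{L'}$ by an orientation-reversing map $r$ of $S^3$. Then $\Phi$, read through $r$, becomes orientation preserving with $h_{\Phi}$ replaced by $r\circ h_\Phi$. I then re-orient the components of $\overline{L'}$ so that $r h_\Phi(m_i)$ is the preferred meridian $m_i'$. This re-orientation is always possible: flipping the orientation of $K_i'$ negates $m_i'$ but keeps $(m_i',l_i')$ positively oriented, so the orientation of $h_\Phi$ is unchanged. \autoref{lem: criteria for perfect} then shows that $\Phi$ is a perfect isomorphism.

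The main obstacle is the bookkeeping in this last step. One has to check that the orientation character of $h_\Phi$ is an intrinsic property, independent of the component orientations, so that mirroring fixes it and re-orienting components then fixes the meridians independently. One also has to check that the identification $N\cong\X{L'}$ is compatible with the conjugation data in peripheral regularity. The hard topological input, namely peripheral regularity and Dehn filling, is supplied by the earlier theorems.
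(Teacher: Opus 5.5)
\textbf{Verdict: one genuine gap, in the first step.} Apart from that, your argument follows the paper's proof step for step: hyperbolicity of $N$, peripheral regularity, filling $N$ along $h_\Phi(\mathbf{m})$ to get a homotopy sphere and hence $S^3$, orienting the components of $L'$ so that $h_\Phi(m_i)$ is the preferred meridian, and finishing with \autoref{lem: criteria for perfect}. Passing to the mirror image $\overline{L'}$ is an explicit version of the paper's choice of orientation on $N$ making $\Phi$ orientation preserving, with $N_{h_\Phi(\mathbf m)}\cong S^3$ given the coherent orientation. Your check that reversing a component replaces $(m_i',l_i')$ by $(-m_i',-l_i')$ without changing the orientation character of $h_\Phi$ is correct.

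\textbf{The gap.} \autoref{thm: Wilton-Zalesskii} only compares compact orientable 3-manifolds with empty or toral boundary. Here $N$ is only assumed compact and orientable, so a priori $\partial N$ could contain surfaces of genus $\ge 2$, and the theorem does not apply. Everything afterwards also needs $N$ to have toral boundary, since \autoref{thm: peripheral regular} requires $N$ cusped hyperbolic and \autoref{thm: dehn filling} requires toral boundary. This is why the paper invokes \cite[Lemma A.1]{Xu24} together with Wilton--Zalesskii before concluding that $N$ is cusped hyperbolic.

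\textbf{A possible fix, if you want to avoid the citation.} A boundary component of genus $\ge 2$ gives $\chi(N)=\tfrac12\chi(\partial N)<0$. Hence $b_1(\widetilde N)\ge 1-d\,\chi(N)$ for every $d$-fold cover $\widetilde N$, since $b_3(\widetilde N)=0$. Indices and first Betti numbers of $\Phi$-corresponding finite-index subgroups agree, because $b_1$ is read off from the profinite abelianization. So this linear growth would transfer to a nested normal tower in $\pi_1(\X L)$ with trivial intersection. That contradicts L\"uck approximation, because the first $L^2$-Betti number of a finite-volume hyperbolic 3-manifold vanishes.
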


\begin{proof}
By \cite[Lemma A.1]{Xu24} and \autoref{thm: Wilton-Zalesskii}, $N$ is cusped hyperbolic. 
According to \autoref{thm: peripheral regular}, we suppose that $\Phi$ is peripheral regular with respect to $h_{\Phi}$. 
Let $\X{L}$ inherit its orientation from $S^3$, and fix an orientation on $N$ so that $\Phi$ is orientation-preserving. 
The Dehn filling along the meridians $\mathbf{m}=(m_1,\cdots, m_n)$ of $\X{L}$ yields $S^3$. 
According to \autoref{thm: dehn filling},  
$\hatpi(N_{h_{\Phi}(\mathbf{m})})\cong \hatpi(M_{\mathbf{m}}) \cong \hatpi(S^3)$ is the trivial group. Hence, $\pi_1(N_{h_{\Phi}(\mathbf{m})})$ is trivial by its residual finiteness, and due to the validity of the Poincar\'e conjecture, ${N_{h_{\Phi}(\mathbf{m})}}\cong S^3$, for which we equip the orientation coherent with that on $N$. The core curves of the Dehn filled solid tori in $N_{h_{\Phi}(\mathbf{m})}$ form an $n$-component link $L'$ in $S^3$ such that $N\cong \X{L'}$, and each $h_\Phi(m_i)$ represents a meridian of the $i$-th component of $L'$. $L'$ is a hyperbolic link since $N$ is cusped hyperbolic, and we can choose orientations on $L'$ so that each $h_\Phi(m_i)$ is the preferred meridian. Then, $\Phi: \hatpi(\X{L})\to \hatpi(\X{L'})$ is a perfect isomorphism according to \autoref{lem: criteria for perfect}.
\end{proof}

\subsection{Branched coverings}
Let $L=K_1\cups K_n$ be a link in $S^3$. Let $\sigma:\pi_1(\X{L})\to H_1(\X{L}) \to \Z/2\Z$ be the unique homomorphism that sends each $m_i$ to the element $1\pmod 2$. The {\em balanced two-fold cover} of $\X{L}$, denoted by $C_2(L)$, is the two-fold cover given by $\ker(\sigma)$. Note that each boundary subgroup of $\pi_1(\X{L})$ surjects $\Z/2\Z$ via $\sigma$, so $C_2(L)$ also has $n$ boundary components.   Each meridian loop $m_i$ in $\X{L}$ lifts to a connected two-fold cover in $C_2(L)$, defining  a boundary slope on the $i$-th boundary component of $C_2(L)$ denoted by $2m_i$. The Dehn filling of $C_2(L)$ along the boundary slopes $(2m_1,\cdots, 2m_n)$ is a closed manifold denoted by $M_2(L)$. In fact, $M_2(L)$ is the {\em two-fold branched cover of $S^3$ along $L$}. Throughout this paper, $C_2(L)$ and $M_2(L)$ are equipped with the orientation lifted from $\X{L}$, which is inherited from the oriented ambient space $S^3$. 

We note that the definition of $C_2(L)$ and $M_2(L)$ do not depend on an orientation on $L$. However, in the following proposition, orientations are added to apply  the definition of a perfect isomorphism.

\begin{proposition}\label{prop: branched cover}
    Suppose $L=K_1\cups K_n$ and $L'=K_1'\cups K_n'$ are two component-ordered oriented $n$-component links  in $S^3$,  and  $\Phi: \hatpi(\X{L})\to \hatpi(\X{L'})$  is a perfect isomorphism. Then, there exist isomorphisms $\check{\Phi}:\hatpi(C_2(L))\to \hatpi(C_2(L'))$ and $\phi: \hatpi(M_2(L))\to \hatpi(M_2(L'))$ that fit into the following commutative diagram.
    \begin{equation}\label{diag: branched cover}
        \begin{tikzcd}[column sep=large]
            \hatpi(\X{L}) \arrow[d,"\Phi"',"\cong"] & \hatpi(C_2(L)) \arrow[l,hook'] \arrow[d,"\check{\Phi}"',"\cong"] \arrow[r,two heads,"\widehat{\text{incl}_\ast}"] & \hatpi(M_2(L)) \arrow[d,"\phi"',"\cong"]\\
            \hatpi(\X{L'})   & \hatpi(C_2(L')) \arrow[l, hook'] \arrow[r, two heads, "\widehat{\text{incl}_\ast}"] & \hatpi(M_2(L'))
        \end{tikzcd}
    \end{equation}
\end{proposition}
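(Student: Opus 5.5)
The plan has three steps: first identify $C_2(L)$ and $C_2(L')$ as $\Phi$--corresponding covers, then check that the restriction of $\Phi$ to these covers is peripheral regular with the lifted meridians matching, and finally apply \autoref{thm: dehn filling} to obtain $\phi$.

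\emph{Step 1: the restriction $\check{\Phi}$.} Let $\sigma:\pi_1(\X{L})\to\Z/2\Z$ and $\sigma':\pi_1(\X{L'})\to\Z/2\Z$ be the homomorphisms defining the balanced two-fold covers. Both factor through the abelianizations, hence through the profinite abelianizations, giving continuous $\widehat{\sigma}$ and $\widehat{\sigma'}$. As in the proof of \autoref{prop: perfect linking number}, $\Phi^{\mathrm{Ab}}$ sends $[m_i]$ to $[m_i']$. Since $\hatH(\X{L})$ is free over $\widehat{\Z}$ on the $[m_i]$, we get $\widehat{\sigma'}\circ\Phi=\widehat{\sigma}$. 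Hence $\Phi(\ker\widehat{\sigma})=\ker\widehat{\sigma'}$. By the correspondence between finite-index subgroups and open subgroups, $\ker\widehat{\sigma}=\overline{\ker\sigma}\cong\hatpi(C_2(L))$, and likewise for $L'$. So $\Phi$ restricts to an isomorphism $\check{\Phi}:\hatpi(C_2(L))\to\hatpi(C_2(L'))$, and the left square of \eqref{diag: branched cover} commutes.

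\emph{Step 2: peripheral regularity of $\check{\Phi}$.} This is the step I expect to be the main obstacle. I would show that $\check{\Phi}$ is peripheral regular with respect to the lift $\check h$ of $h$ to $\partial C_2(L)\to\partial C_2(L')$. The lift exists because $h$ preserves meridians and longitudes, so it respects the index-two subgroups $\ker\sigma\cap\pi_1(\partial_i)$, which are generated by $m_i^2$ and $l_i$.

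By perfectness, $C_{g_i}\circ\Phi$ restricted to $\hatpi(\partial_i\X{L})$ equals $\widehat{h_{i\ast}}$. We need the conjugating element to lie in $\ker\widehat{\sigma'}$. If $\widehat{\sigma'}(g_i)\neq 0$, replace $g_i$ by $g_i m_i'^{-1}$ (more precisely, compose with conjugation by $h(m_i)^{-1}$). Since the boundary group $\pi_1(\partial_i\X{L'})$ is abelian, this conjugation fixes it pointwise, so the diagram still commutes. The boundary subgroup of $C_2(L)$ is the index-two subgroup $\langle m_i^2,l_i\rangle$, and its closure is $\hatpi(\partial_iC_2(L))$ since $\pi_1(\X{L})$ induces the full profinite topology on peripheral subgroups. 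Restricting the diagram to these closures gives peripheral regularity of $\check{\Phi}$, with $\check h(2m_i)=2m_i'$.

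This step needs care in three places. First, the boundary components of the cover, and their basepoints or conjugates inside $\ker\widehat{\sigma'}$, must match up correctly. Second, there is exactly one boundary component of $C_2(L)$ over each $\partial_i\X{L}$, since $\sigma$ is surjective on each peripheral subgroup. Third, subgroup closures must be compatible; this follows from separability of peripheral subgroups, which holds because 3-manifold groups are LERF/residually finite in the needed sense, or can be checked directly for abelian subgroups via finite quotients.

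\emph{Step 3: Dehn filling.} With $\check{\Phi}$ peripheral regular with respect to $\check h$, and $\check h$ sending the slopes $(2m_1,\dots,2m_n)$ to $(2m_1',\dots,2m_n')$, apply \autoref{thm: dehn filling} to the filling along these slopes. Since $M_2(L)=C_2(L)_{(2m_i)}$ and $M_2(L')=C_2(L')_{(2m_i')}$, this produces $\phi:\hatpi(M_2(L))\to\hatpi(M_2(L'))$ making the right square commute. The horizontal maps are surjective because Dehn filling surjects on $\pi_1$ and completion preserves surjections.
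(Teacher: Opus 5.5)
Your proposal is correct in outline and follows the paper's proof. You restrict $\Phi$ to $\ker\widehat{\sigma}$ using $\widehat{\sigma'}\circ\Phi=\widehat{\sigma}$, pass peripheral regularity to the restriction with respect to the lift $\check h$ of $h$ (so that $\check h(2m_i)=2m_i'$), and apply \autoref{thm: dehn filling}. Your extra step of moving $g_i$ into $\ker\widehat{\sigma'}$ by a peripheral element is a detail the paper leaves implicit, and you handle it correctly.

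One claim is wrong, though easily repaired. $\ker\sigma\cap\pi_1(\partial_i\X{L})$ equals $\langle m_i^2,l_i\rangle$ only when $\sigma(l_i)=\sum_{j\neq i}\lk(K_i,K_j) \bmod 2$ vanishes. When this sum is odd, for example for a two-component link with odd linking number, the subgroup is $\langle m_i^2,m_il_i\rangle$. So ``$h$ preserves meridians and longitudes'' does not by itself show that $h$ carries these index-two subgroups onto each other; you also need $\sigma'(l_i')=\sigma(l_i)$. This follows from \autoref{prop: perfect linking number}, or directly: for $x\in\pi_1(\partial_i\X{L})$ you have $\widehat{\sigma'}(\widehat{{h_i}_\ast}(x))=\widehat{\sigma'}(g_i\Phi(x)g_i^{-1})=\widehat{\sigma'}(\Phi(x))=\widehat{\sigma}(x)$. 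Hence the lift $\check h$ exists, and with this correction your argument goes through.
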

\begin{sloppypar}
\begin{proof}
    First, it is easy to verify the following commutative diagram.
    \begin{equation*}
        \begin{tikzcd}
            \widehat{\sigma}:\; \hatpi(\X{L}) \arrow[r,two heads] \arrow[d,"\Phi"'] & {\hatpi(\X{L})^{\mathrm{Ab}}\cong \widehat{\Z}[m_1]\oplus \cdots \oplus \widehat{\Z}[m_n]} \arrow[d,"\Phi^{\mathrm{Ab}}"',"{[m_i]\mapsto [m_i']}"] \arrow[rr,"{[m_i]\mapsto 1}"] & &  \Z/2\Z \arrow[d,"\cong"]\\
            \widehat{\sigma'}:\; \hatpi(\X{L'}) \arrow[r,two heads] &  {\hatpi(\X{L'})^{\mathrm{Ab}}\cong \widehat{\Z}[m_1']\oplus \cdots \oplus \widehat{\Z}[m_n']} \arrow[rr,"{[m_i']\mapsto 1}"] &&  \Z/2\Z
        \end{tikzcd}
    \end{equation*}
    Thus, $\Phi$ sends $\ker(\widehat{\sigma})=\hatpi(C_2(L))$ to $\ker(\widehat{\sigma'})=\hatpi(C_2(L'))$. This gives the left block of (\ref{diag: branched cover}) with $\check{\Phi}$ being the restriction of $\Phi$.

    In addition, since $\Phi$ is peripheral regular (with respect to $h_{\Phi}$), $\check{\Phi}$ --   being the restriction of $\Phi$ -- is also peripheral regular,   with respect to $\check h:\partial C_2(L)\to \partial C_2(L')$ that fits into the following commutative diagram. 
    \begin{equation*}
        \begin{tikzcd}
            \partial C_2(L) \arrow[r,"\check h"] \arrow[d,"\text{cover}"'] & \partial C_2(L')\arrow[d,"\text{cover}"]\\
            \partial (\X{L}) \arrow[r, "h_\Phi"] & \partial (\X{L'})
        \end{tikzcd}
    \end{equation*}Hence,  $\check h (2m_i)=2m_i'$. Recall that $M_2(L)$ is the Dehn filling of $C_2(L)$ along $(2m_1,\cdots,2m_n)$ and $M_2(L')$ is the Dehn filling of $C_2(L')$ along $(2m_1',\cdots,2m_n')$. Thus, \autoref{thm: dehn filling} gives an isomorphism $\phi: \hatpi(M_2(L))\to \hatpi(M_2(L'))$ that fits into the right block of (\ref{diag: branched cover}). 
\end{proof}
\end{sloppypar}

\section{Two-bridge links}

\subsection{Schubert normal form}
Two-bridge links (including knots) were introduced by Schubert \cite{Sch56} and classified by their Schubert normal forms $\mathbf{b}(\alpha,\beta)$, where $-\alpha<\beta<\alpha$ are coprime integers and $\beta$ is odd.  In particular, $\mathbf{b}(\alpha,\beta)$ is a knot when $\alpha$ is odd, and  $\mathbf{b}(\alpha,\beta)$ is a two-component link when $\alpha$ is even. Conventionally, $\alpha$ is called the {\em torsion} and $\beta$ is called the {\em crossing number} of $\mathbf{b}(\alpha,\beta)$. 

The Schubert diagram of a two-bridge link consists of two arcs above the projection plane, depicted as straight segments, and two arcs below the projection plane. The Schubert  diagram is always equipped with a standard orientation, such that the two arcs above the projection plane are assigned opposite orientations, see \autoref{fig: two bridge}. 

When we mention a two-bridge link $L=\mathbf{b}(\alpha,\beta)$, $L$ is always equipped with this orientation. 

\begin{figure}[ht!]
\centering
\includegraphics[width=5cm]{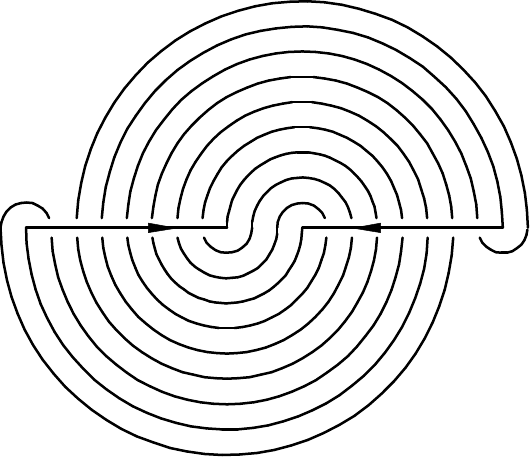}
\caption{The oriented Schubert diagram for $\mathbf{b}(8,3)$ (the Whitehead link)}
\label{fig: two bridge}
\end{figure}

\begin{theorem}[\cite{Sch56}]\label{thm: two bridge classification}
Let $L=\mathbf{b}(\alpha,\beta)$ and $L=\mathbf{b}(\alpha',\beta')$ be two-bridge links. 
\begin{enumerate}[leftmargin=*,label=(\arabic*)]
\item $L$ and $L'$ are isotopic as oriented links if and only if $\alpha=\alpha'$ and $\beta'\equiv \beta^{\pm1}\pmod{2\alpha}$.
\item $L$ and $L'$ are isotopic as unoriented links if and only if $\alpha=\alpha'$ and $\beta'\equiv \beta^{\pm1}\pmod\alpha$.
\end{enumerate}
\end{theorem}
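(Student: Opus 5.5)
The plan is to reduce the statement to the lens space classification via two-fold branched covers, and then handle orientations with Schubert's explicit description of the oriented link.

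\emph{Reduction to lens spaces.} The two-fold branched cover is $M_2(\mathbf{b}(\alpha,\beta))\cong L(\alpha,\beta)$; this is Schubert's observation. Isotopic unoriented links have homeomorphic branched covers, where the homeomorphism is orientation preserving because it is lifted from an isotopy of $S^3$. Reidemeister's classification says $L(\alpha,\beta)\cong L(\alpha',\beta')$ orientation-preservingly if and only if $\alpha=\alpha'$ and $\beta'\equiv\beta^{\pm1}\pmod\alpha$. This gives the necessity in (2). Since $\beta$ is determined mod $2\alpha$ by the odd representative in $(-\alpha,\alpha)$, necessity in (1) will then follow once the finer oriented invariant is identified.

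\emph{Sufficiency for (2).} I would exhibit explicit isotopies. The normal form $\mathbf{b}(\alpha,\beta)$ depends only on $\beta$ mod $\alpha$ after choosing the odd representative in $(-\alpha,\alpha)$. Rotating the Schubert diagram by $\pi$, which interchanges the roles of the over-bridges and under-bridges, gives $\mathbf{b}(\alpha,\beta)\simeq \mathbf{b}(\alpha,\beta^{-1})$. Concretely, the Schubert form can be presented as a 4-plat or rational tangle closure with continued fraction $\alpha/\beta$, and turning it upside down reverses the continued fraction. One then uses the standard fact that reversing the continued fraction of $\alpha/\beta$ gives $\alpha/\beta'$ with $\beta\beta'\equiv(-1)^{n+1}\pmod\alpha$. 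Together with the sign adjustment from mirror and rotation symmetries, this gives $\beta'\equiv\beta^{\pm1}$.

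\emph{The oriented statement (1).} For links ($\alpha$ even), reversing the orientation of one component changes $\beta$ to $\beta\pm\alpha$ mod $2\alpha$. This is seen by tracking the standard orientation convention in \autoref{fig: two bridge}: the two over-arcs carry opposite orientations, and flipping one component effectively replaces the over-arc pairing by a shift of $\alpha$ in the Schubert parameter. For knots, reversal is realized by an isotopy, namely the $\pi$-rotation, and $\beta+\alpha$ is even, so it is not an admissible parameter. This explains why the classification is mod $2\alpha$.

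\emph{Main obstacle.} The hardest step is necessity of the mod $2\alpha$ condition for links. I would detect it through the linking number. For $\mathbf{b}(\alpha,\beta)$ with the standard orientation,
\[
\lk=\tfrac12\sum_{k=1}^{\alpha/2}\epsilon_{2k-1},\qquad \epsilon_i=(-1)^{\lfloor i\beta/\alpha\rfloor}.
\]
Replacing $\beta$ by $\beta+\alpha$ negates each $\epsilon_{2k-1}$, since $(2k-1)$ is odd, and hence negates the linking number. For the case $\lk=0$, one would need a finer argument, using the oriented version of the branched cover homeomorphism (the covering involution) as in Schubert's original work. I expect this step to be where the genuine difficulty lies, and would ultimately cite \cite{Sch56}.
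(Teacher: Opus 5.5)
\textbf{Context.} The paper does not prove this statement; it quotes it from \cite{Sch56}. So the comparison is with Schubert's theorem itself.

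\textbf{What works.} Your treatment of (2) follows the standard lines: necessity from the orientation-preserving classification of $\mathrm{L}(\alpha,\beta)=M_2(\mathbf{b}(\alpha,\beta))$, and sufficiency from the over/under-bridge interchange together with the component-reversal move $\beta\mapsto\beta+\alpha$. The knot case of (1) does reduce to (2), because for odd $\alpha$ an odd residue mod $\alpha$ has a unique odd lift mod $2\alpha$.

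\textbf{The gap.} The missing piece is necessity in (1) for two-component links, which is exactly the case you defer. After (2), you still have to exclude $\beta'\equiv\beta^{\pm1}+\alpha\pmod{2\alpha}$. That is, you must show $\mathbf{b}(\alpha,\beta)$ is not isotopic to itself with one component reversed unless $\beta+\alpha\equiv\beta^{-1}\pmod{2\alpha}$.

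The linking number settles this only when $\lk\neq0$, and the case $\lk=0$ is not a harmless corner. Take $\mathbf{b}(32,7)$:
\begin{itemize}
\item The signs $\epsilon_1,\epsilon_3,\dots,\epsilon_{31}$ are $++---++--++---++$, so $\lk=0$.
\item Its one-component reversal is $\mathbf{b}(32,39)=\mathbf{b}(32,-25)$.
\item $-25$ is congruent to neither $7$ nor $7^{-1}\equiv-9\pmod{64}$.
\end{itemize}
So these are distinct oriented links with the same linking number. This can also be seen directly. The even continued fractions $32/7=[4,2,-4]$ and $32/23=[2,-2,2,2,-2,2,-2]$, with $23\equiv7^{-1}\pmod{32}$, give plumbed Seifert surfaces for the two orientation classes. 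Their Conway polynomials have degrees $3$ and $7$.

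Hence a strictly finer oriented invariant is indispensable. Citing \cite{Sch56} at this point amounts to citing the statement itself. A complete argument could use Burde's linking numbers of the lifted great-circle link $\widetilde L\subset\widetilde{S^3}$ from \autoref{sec: Burde}, computed in terms of $\beta \bmod 2\alpha$ in \cite{Bur75}. Alternatively, use Schubert's original argument.

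\textbf{Smaller points.}
\begin{itemize}
\item Your linking-number formula is off by a factor of $2$: it gives $\frac12$ for the Hopf link $\mathbf{b}(2,1)$. Up to a global sign, $\lk$ is the exponent sum $\sum_{k=1}^{\alpha/2}\epsilon_{2k-1}$ of $b$ in the Schubert word $w=b^{\epsilon_1}a^{\epsilon_2}\cdots b^{\epsilon_{\alpha-1}}$. Your sign-flip observation under $\beta\mapsto\beta+\alpha$ is unaffected.
\item In the continued-fraction sketch, a mirror image cannot be used to adjust signs in an isotopy argument. Fix the parity of the continued-fraction length instead.
\item State the over/under-bridge interchange modulo $2\alpha$, with orientations tracked. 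That version, $\beta\beta'\equiv 1\pmod{2\alpha}$, is what sufficiency in (1) actually requires.
\end{itemize}
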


\begin{remark}\label{rmk: orientation two bridge}
    All two-bridge links are invertible; and for two-component two-bridge links, the two components are interchangeable, and altering the orientation of one component switches $\mathbf{b}(\alpha,\beta)$ to $\mathbf{b}(\alpha,\beta\pm \alpha)$. Thus, given an unoriented two-bridge link, any assignment of orientation is in fact a standard orientation within some Schubert normal form. 
\end{remark}

\subsection{Double branched cover}
For a two-bridge link (including knot) $L=\mathbf{b}(\alpha,\beta)$, it is known that $M_2(L)$ is the (oriented) lens space $\mathrm{L}(\alpha,\beta)$ (see \cite[Proposition 12.3]{BZ}  for example). Conversely, Hodgson and Rubinstein \cite{HR85} proved, through classifying involutions on lens spaces,  that  two-bridge links are actually characterized by their two-fold  branched covers. 
\begin{theorem}[\cite{HR85}]\label{thm: involution lens space}
    Suppose $L$ is an unoriented  link in $S^3$ so that the two-fold branched cover $M_2(L)$ is the oriented lens space  $\mathrm{L}(\alpha,\beta)$.  Then, $L$ is isotopic to the two-bridge link $\mathbf{b}(\alpha,\beta^{\ast})$, where $\beta^\ast\equiv \beta \pmod \alpha$. 
\end{theorem}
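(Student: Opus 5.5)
The plan is to reduce the statement to a classification of involutions of lens spaces, and then read off the quotient. Let $\tau$ be the deck involution of the two-fold branched cover $p\colon M_2(L)=\mathrm{L}(\alpha,\beta)\to S^3$. Since $M_2(L)$ and $S^3$ are oriented compatibly, $\tau$ is orientation preserving. Its fixed set $\mathrm{Fix}(\tau)=p^{-1}(L)$ is a nonempty $1$-dimensional submanifold, and $L=p(\mathrm{Fix}(\tau))$ with $S^3=\mathrm{L}(\alpha,\beta)/\tau$. So it suffices to classify, up to conjugacy, orientation-preserving involutions of $\mathrm{L}(\alpha,\beta)$ with $1$-dimensional fixed set and quotient $S^3$, and to identify the image of the fixed set.

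The first and central step is to show that $\tau$ leaves invariant, up to conjugation, a genus-one Heegaard torus $T\subset \mathrm{L}(\alpha,\beta)$, and interchanges neither of the two complementary solid tori. I would get this from geometrization of finite group actions (the orbifold theorem, or the Dinkelbach--Leeb equivariant Ricci flow result), which makes $\tau$ conjugate to an isometry of the spherical metric $S^3/\Z_\alpha$. Lift $\tau$ to an isometry $\tilde\tau\in O(4)$ that normalizes the cyclic group generated by $(z,w)\mapsto(\zeta z,\zeta^{\beta}w)$. Such normalizers can be listed explicitly. The isometries with $1$-dimensional fixed set in the quotient are essentially $(z,w)\mapsto(\bar z,\bar w)$ up to composition with deck transformations, together with the swap-type elements that exist when $\beta^2\equiv\pm1 \pmod\alpha$. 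Each of these preserves the Clifford torus $|z|=|w|$. In the swap cases I need to check separately that, after conjugation, one can still choose an invariant torus with $\tau$ preserving each side; failing that, I would argue that the quotient has nonempty $2$-dimensional or non-circle fixed data, which contradicts the hypothesis.

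If one wants to avoid geometrization, the alternative is the Hodgson--Rubinstein route: an equivariant minimal-surface or sweepout argument combined with Bonahon--Otal uniqueness of genus-one splittings of lens spaces. Producing the invariant Heegaard torus is the step I expect to be the main obstacle, whichever method is used.

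Once $T$ is invariant and each solid torus $V_i$ is preserved, $\tau|_{V_i}$ is, by the classification of involutions of the solid torus with $1$-dimensional fixed set, the standard rotation by $\pi$ about an axis meeting $V_i$ in two arcs. Its quotient is a $3$-ball containing two unknotted arcs, i.e. a trivial rational tangle. Hence $S^3=B_1\cup B_2$ with $L\cap B_i$ a trivial two-string tangle, so $L$ is a two-bridge link $\mathbf{b}(\alpha',\beta')$.

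For the final identification, the known computation recalled above gives $M_2(\mathbf{b}(\alpha',\beta'))=\mathrm{L}(\alpha',\beta')$ as oriented manifolds. Thus $\mathrm{L}(\alpha',\beta')\cong\mathrm{L}(\alpha,\beta)$ orientation-preservingly. By the classification of lens spaces this forces $\alpha'=\alpha$ and $\beta'\equiv\beta^{\pm1}\pmod\alpha$. Since $\mathbf{b}(\alpha,\beta)$ and $\mathbf{b}(\alpha,\beta^{-1})$ are isotopic as unoriented links by \autoref{thm: two bridge classification}(2), we may replace $\beta'$ by a representative $\beta^\ast\equiv\beta\pmod\alpha$. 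Adding $\pm\alpha$ if needed, we can also take $\beta^\ast$ odd with $-\alpha<\beta^\ast<\alpha$, which is legitimate by \autoref{rmk: orientation two bridge}. This gives $L\simeq\mathbf{b}(\alpha,\beta^\ast)$.
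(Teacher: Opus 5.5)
The paper gives no proof of this statement; it quotes \cite{HR85}. Your route (conjugate $\tau$ to an isometry, then classify) is closer in spirit to \cite{MS25}. The outline is viable, and your final paragraph (matching $\alpha$ and normalizing $\beta^\ast$) is fine. The gap is that the step you call central, the swap case, is never carried out, and the fallback you offer for it cannot work.

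\emph{The fallback fails.} An orientation-preserving involution of a 3-manifold always has fixed set a disjoint union of circles, so ``2-dimensional or non-circle fixed data'' never occurs. Moreover, side-swapping involutions with quotient $S^3$ genuinely occur in your list. Take $\mathrm{L}(n,1)=S^3/\langle g\rangle$ with $g(z,w)=(\zeta z,\zeta w)$ and $\zeta=e^{2\pi i/n}$, and the orientation-preserving swap $s(z,w)=(\bar w,\bar z)$. Then:
\begin{itemize}
\item $sgs^{-1}=g^{-1}$, so $s$ descends to $\mathrm{L}(n,1)$, and it interchanges the two solid tori of the Clifford torus;
\item every $sg^k$ fixes the circle $\{z=\bar\zeta^{k}\bar w\}$ on that torus;
\item these elements generate the dihedral group $\langle g,s\rangle$, so Armstrong's theorem makes the quotient simply connected, hence $S^3$, with branch set $T(2,n)$.
\end{itemize}
So no contradiction is available, and you need a positive argument. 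The cleanest one: suppose $\tau$ preserves $T$ and exchanges $V_1,V_2$. Then $A=T/\tau$ is an annulus or M\"obius band with $\partial A=L$, and $S^3\setminus \mathrm{int}\,N(A)\cong V_1$ is a solid torus. Hence $A$ is unknotted, and $L$ is a $(2,k)$-torus link, which is two-bridge.

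\emph{The side-preserving case is also incomplete.} Your list of isometric involutions omits the identity-component rotations such as $(z,w)\mapsto(-z,w)$, whose fixed set is a core circle. Correspondingly, it is false that an involution of a solid torus with $1$-dimensional fixed set must be the rotation about an axis meeting it in two arcs. Rotation about the core is another possibility, and $\tau|_{V_i}$ may also be free. This case really occurs: on $\mathrm{L}(2,1)$, $(z,w)\mapsto(-z,w)$ preserves both sides of the Clifford torus and acts on each as a core rotation. Its quotient is $S^3$, with the Hopf link as branch set.

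For $\alpha\ge 3$ these cases can be ruled out as follows. If $\tau|_T$ acts as $+1$ on $H_1(T)$, it is a free translation. The quotient is then a union of two solid tori whose meridians meet $\alpha$ or $\alpha/2$ times, so it is not $S^3$. If $\tau|_T$ acts as $-1$, both restrictions are the two-arc involution, and your tangle argument applies. The cases $\alpha\le2$ must be treated directly. Finally, when $\beta\equiv\pm1\pmod\alpha$ the isometry group has nonabelian identity component. There, ``list the normalizer'' needs an actual conjugacy argument showing that each involution preserves some Heegaard torus.

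With these pieces supplied the proof goes through. As written, the crucial case is missing.
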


\subsection{Burde's classification theorem}\label{sec: Burde}
Although non-isotopic (unoriented) two-bridge links are distinguished by the   homeomorphism type of their two-fold branched covers $M_2(L)$, they may not be distinguished by the isomorphism type of  $\pi_1(M_2(L))$ since non-homeomorphic lens spaces could possibly have isomorphic fundamental groups. A classification theorem of Burde \cite{Bur75},   based on linking number invariants in a further cover, is precisely applicable to distinguish these cases.

For a two-bridge link  $L=\mathbf{b}(\alpha,\beta)$,   the universal covering space of $M_2(L)= \mathrm{L}(\alpha,\beta)$ is  homeomorphic to $S^3$, which we denote as $\widetilde{S^3}$ in order to distinguish from the original space $S^3$. Denote $\check{L}=r^{-1}(L)\subseteq \mathrm{L}(\alpha,\beta)$ and $\widetilde{L}= q^{-1}(L)\subseteq \widetilde{S^3}$, where $r$ and $q$ are the (branched) covering maps defined as follows.
\begin{equation*}
\begin{tikzcd}[column sep=4cm]
        q: \,(\widetilde{S^3},\widetilde{L}) \arrow[r,"p","\text{$\alpha$-fold cover}"'] & (\mathrm{L}(\alpha,\beta),\check{L}) \arrow[r,"r","\text{two-fold branched cover}"'] & (S^3,L)
\end{tikzcd}
\end{equation*}

 We equip $\widetilde{S^3}$ with the orientation lifted from $S^3$, and equip $\widetilde{L}$ with the orientation lifted from $L$. 

When $\alpha$ is odd, both $L$ and $\check{L}$ have one component, $\check{L}$ is null-homotopic in $\mathrm{L}(\alpha,\beta)$, and $\widetilde{L}=\widetilde{L}_1\cups \widetilde{L}_\alpha$ has $\alpha$ components. The deck transformation group $\Z/\alpha\Z$ for $p:\widetilde{S^3} \to \mathrm{L}(\alpha,\beta)$ acts transitively on the $\alpha$ components of $\widetilde{L}$. Let $\tau$ be a generator of $\Z/\alpha\Z$. We say  that  the  ordering of the components $\widetilde{L}_1,\cdots,\widetilde{L}_\alpha$  is {\em $\tau$-labeled} if $\tau(\widetilde{L}_i)=\widetilde{L}_{i+1}$ with indices mod $\alpha$.

When $\alpha$ is even, $L=J\cup K$ is a two-component link. $\check{L}=\check{J}\cup \check{K}$ also has two components, and both of them represent the unique $2$-torsion in $\pi_1(\mathrm{L}(\alpha,\beta))\cong \Z/\alpha\Z$. Consequently, $\widetilde{L}=\widetilde{J}_1\cups \widetilde{J}_{\frac{\alpha}{2}}\cup \widetilde{K}_1\cups \widetilde{K}_{\frac{\alpha}{2}}$ has $\alpha$ components, where $\widetilde{J}_i$ projects $J$ and $\widetilde{K}_i$ projects $K$ through $q$. The deck transformation group $\Z/\alpha\Z$ for $p$ acts on $\widetilde{L}$ with two orbits $\{\widetilde{J}_1,\cdots,\widetilde{J}_\frac{\alpha}{2}\}$ and $\{\widetilde{K}_1,\cdots,\widetilde{K}_\frac{\alpha}{2}\}$, so that the order-$2$ subgroup $\frac{\alpha}{2}\Z/\alpha\Z$ preserves all components of $\widetilde{L}$. Let $\tau$ be a generator of $\Z/\alpha\Z$. We say that  the ordering of the components $\widetilde{J}_1,\cdots,\widetilde{J}_\frac{\alpha}{2},\widetilde{K}_1,\cdots,\widetilde{K}_\frac{\alpha}{2} $ is {\em $\tau$-labeled} if $\tau(\widetilde{J}_i)=\widetilde{J}_{i+1}$ and $\tau(\widetilde{K}_i)=\widetilde{K}_{i+1}  $ 
        with indices mod $\frac{\alpha}{2}$.

\begin{theorem}[\cite{Bur75}]\label{thm: Burde}
    Let $L=\mathbf{b}(\alpha,\beta)$ and $L'=\mathbf{b}(\alpha,\beta')$ be two two-bridge links equipped with the standard orientation. Suppose $\tau$ and $\tau'$ are generators of the deck transformation groups of $p: \widetilde{S^3} \to \mathrm{L}(\alpha,\beta)$ and $p':\widetilde{S^3} \to \mathrm{L}(\alpha,\beta')$ isomorphic to $\Z/\alpha\Z$. 
    Suppose $\widetilde{L}$ and $\widetilde{L}'$ are component-ordered so that $\widetilde{L}$ is $\tau$-labeled and $\widetilde{L}'$  is $\tau'$-labeled. If $\lk(\widetilde  L)=\lk(\widetilde L')$, then $L$ and $L'$ are isotopic as oriented links in $S^3$. 
\end{theorem}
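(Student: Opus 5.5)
The statement is Burde's theorem, and the plan is to reduce it to an explicit computation. I would compute the circulant linking data $c_j=\lk(\widetilde L_1,\widetilde L_{1+j})$ directly from the Schubert normal form $\mathbf{b}(\alpha,\beta)$. Then I would show that this data, taken up to the ambiguity in the choice of the generator $\tau$, recovers $\beta$ modulo $2\alpha$ up to $\beta\mapsto\beta^{\pm1}$. \autoref{thm: two bridge classification}(1) then finishes the argument.

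\textbf{Step 1 (explicit lift).} Realize $\mathrm{L}(\alpha,\beta)$ as the two-fold branched cover of $S^3$ along $L$ using the standard genus-one Heegaard picture. The two bridges lift to the core arcs of the two solid tori. The Schubert diagram then shows that $\check L$ is a union of arcs crossing the Heegaard torus at $2\alpha$ points, with rotation by $2\pi\beta/\alpha$ between consecutive crossings. Lifting to $\widetilde{S^3}$, each $\widetilde L_i$ (or $\widetilde J_i,\widetilde K_i$ when $\alpha$ is even) is a polygonal curve on a standard torus in $S^3$, with bridges through the two complementary solid tori.

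Counting crossings of the projected lifts with signs, I expect the classical formula
$$\lk(\widetilde L_1,\widetilde L_{1+j})=\sum_{k}\varepsilon_k\varepsilon_{k+j'}$$
up to a fixed normalization, where $\varepsilon_k=(-1)^{\lfloor k\beta/\alpha\rfloor}$ and $j'$ is the index of $\tau^j$ in the standard labeling. The $\varepsilon_k$ are exactly the signs appearing in the Schubert presentation $\pi_1(\X L)=\langle a,b\mid wa=bw\rangle$, $w=b^{\varepsilon_1}a^{\varepsilon_2}\cdots$. The standard orientation (opposite orientations on the two overbridges) fixes the signs consistently, so the matrix is well defined once $\tau$ is fixed.

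\textbf{Step 2 (recovering $\beta$).} I would show that the sequence $(c_j)$ determines the sign sequence $(\varepsilon_k)$. The plan is to take successive differences, or equivalently a discrete Fourier transform over $\Z/\alpha\Z$, of the circulant matrix; since $\varepsilon$ is a $\pm1$ sequence with a very rigid "Sturmian" structure, its autocorrelation should pin it down. The sequence $\varepsilon_k=(-1)^{\lfloor k\beta/\alpha\rfloor}$ in turn determines $\beta$ modulo $2\alpha$. Equivalently, summing over a $\tau$-orbit recovers the linking form of the lens space, $\lk(\check L$-$\text{lifts})\equiv \beta^{\pm1}/\alpha \pmod 1$, and the finer integer data recovers $\beta$ modulo $2\alpha$ rather than merely modulo $\alpha$.

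\textbf{Step 3 (change of generator).} If $\tau'=\tau^k$ with $k\in(\Z/\alpha\Z)^\times$, the $\tau'$-labeled matrix is the $\tau$-labeled one reindexed by $j\mapsto kj$. So I must check that equality $\lk(\widetilde L)=\lk(\widetilde L')$ under arbitrary generator choices forces $\beta'\equiv\beta^{\pm1}\pmod{2\alpha}$, and not merely $\beta'\equiv k^{\pm1}\beta$-type relations. The key point is that the linking numbers $c_j$ are nonzero and distinguishable (e.g. $c_{\pm1}$ is extremal), which singles out the generator corresponding to the standard rotation up to inversion. Inversion accounts exactly for $\beta^{-1}$, since it corresponds to swapping the roles of the two Heegaard solid tori.

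\textbf{Main obstacle.} The main obstacle is Step 2 together with Step 3: proving that the autocorrelation of the Sturmian-type sign sequence is injective on $\beta$ modulo $2\alpha$ up to inversion, uniformly in $\alpha$. I would first try to handle this by identifying $c_j$ with a Dedekind-sum-like expression in $j\beta^{-1}/\alpha$ and using its known injectivity properties.
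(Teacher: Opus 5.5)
The paper does not prove this statement; it quotes it from \cite{Bur75}. Your outline (compute the linking numbers of $\widetilde L$ in terms of $\beta$, read $\beta$ back off, then control the choice of generator) has the right shape. However, the explicit formula in Step~1 is wrong, and Steps~2 and~3 are built on it.

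In the standard model $\widetilde{S^3}\subset\mathbb{C}^2$, take $\tau(z,w)=(\zeta z,\zeta^{\beta}w)$ with $\zeta=e^{2\pi i/\alpha}$, and $\iota(z,w)=(\bar z,\bar w)$. The lifts of the covering involution of $\mathrm{L}(\alpha,\beta)$ over $S^3$ are the maps $\tau^k\iota$. So $\widetilde L$ is the union of their fixed circles $C_k=\{(e^{\pi i k/\alpha}x,\,e^{\pi i k\beta/\alpha}y): x,y\in\R,\ x^2+y^2=1\}$, $k\in\Z/\alpha\Z$. These are pairwise disjoint great circles (as the remark after the statement says), so every off-diagonal entry of $\lk(\widetilde L)$ is $\pm1$. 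A $4\times 4$ determinant computation gives $\lk(C_j,C_k)=\pm\,\mathrm{sign}\bigl(\sin\frac{\pi\nu}{\alpha}\sin\frac{\pi\nu\beta}{\alpha}\bigr)=\pm\varepsilon_\nu$ for $\nu\equiv k-j$. The sign is fixed by orientation conventions, and the oddness of $\beta$ is what makes this well defined on $\Z/\alpha\Z$.

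A formula quadratic in the $\varepsilon_k$ cannot be right. For $\beta=\pm1$ all $\varepsilon_k$ are equal, so every product $\varepsilon_k\varepsilon_{k+j'}$ is $+1$. Whatever the normalization, your formula would then give identical linking matrices to the torus knot $\mathbf{b}(\alpha,1)$ and its mirror $\mathbf{b}(\alpha,-1)$, which are not isotopic.

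So the ``main obstacle'' you isolate (inverting an autocorrelation, Dedekind-sum injectivity) is an artifact of the wrong formula. The matrix hands you the sign sequence itself, and recovering $\beta$ from $(\varepsilon_\nu)_{1\le\nu\le\alpha-1}$ is elementary. $\varepsilon_1$ gives the sign of $\beta$. For $0<\beta<\alpha$ the integer $\lfloor \nu\beta/\alpha\rfloor$ increases by $0$ or $1$ at each step, so $\beta-1$ is the number of sign changes. Your geometric picture in Step~1 is also off: $\check L$ is the fixed set of the involution and meets the Heegaard torus in $4$ points, not $2\alpha$.

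The genuinely nontrivial point is the one you raise in Step~3, and your resolution does not work. All entries are $\pm1$, so no $c_{\pm1}$ is ``extremal'', and nothing in the matrix singles out a preferred generator. The statement allows arbitrary $\tau,\tau'$, and this generality is really needed: in the proof of \autoref{thm: two bridge} one has $\tau'=\phi(\tau)$ for an uncontrolled automorphism $\phi$ of $\Z/\alpha\Z$.

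Replacing $\tau$ by $\tau^m$ reindexes the differences by $\nu\mapsto m\nu$, so the invariant becomes $\nu\mapsto\mathrm{sign}\bigl(\sin\frac{\pi m\nu}{\alpha}\sin\frac{\pi m\beta\nu}{\alpha}\bigr)$. What must be shown is that this agrees with the standard-labeling pattern of some $\beta'$ only when $m\equiv\pm1$ or $m\beta\equiv\pm1\pmod{\alpha}$. These two cases yield $\beta'=\beta$ and $\beta'\equiv\beta^{-1}\pmod{2\alpha}$ respectively. Your proposal gives no argument for this combinatorial statement about sign patterns of pairs of rotation angles. Without it you only obtain the theorem for a fixed standard labeling, which is weaker than what is stated and used.
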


\begin{remark}
    In fact, $\widetilde{L}$ and $\widetilde{L'}$ are great circle links in $\widetilde{S^3}$, so all the linking numbers belong to $\{\pm 1\}$. Thus, it is important to follow the precise orientations lifted from $L\subseteq S^3$ and $L'\subseteq S^3$ to obtain the correct signs. 
\end{remark}

\subsection{Proof of profinite rigidity}

\begin{theorem}\label{thm: two bridge}
    Let $L\subseteq S^3$ be a two bridge link (including knot). Then $\pi_1(\X{L})$ is profinitely rigid in $\mathscr{M}$.
\end{theorem}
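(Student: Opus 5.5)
We split according to the geometry of $\X{L}$. A two-bridge link $\mathbf{b}(\alpha,\beta)$ is either the unknot ($\alpha=1$), the Hopf link ($\alpha=2$), a $(2,\beta)$-torus link, or a hyperbolic link. In the non-hyperbolic cases $\X{L}$ is Seifert fibered (a solid torus, $T^2\times I$, or a torus-link exterior). Suppose $N$ is a compact orientable 3-manifold with $\hatpi(N)\cong\hatpi(\X{L})$. By \autoref{thm: Wilton-Zalesskii}, $N$ is then Seifert fibered with the same geometry, and we invoke the known profinite rigidity results for Seifert fibered spaces \cite{Hem14,Wil17}. The remaining low-complexity cases we check directly: $\pi_1=\Z$ or $\Z^2$, and torus-knot or torus-link groups via \cite{Wil17}. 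The rest of the plan addresses the hyperbolic case.

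Assume $L=\mathbf{b}(\alpha,\beta)$ is hyperbolic, with its standard orientation, and let $\Phi:\hatpi(\X L)\to\hatpi(N)$ be an isomorphism. First, \autoref{thm: hyperbolic link} gives a hyperbolic link $L'$ with $N\cong \X{L'}$, with $\Phi$ a perfect isomorphism for suitable orientations and component orderings on $L'$. Next, \autoref{prop: branched cover} produces $\phi:\hatpi(M_2(L))\to\hatpi(M_2(L'))$. Since $\pi_1(M_2(L))=\Z/\alpha\Z$ is finite, $\pi_1(M_2(L'))$ is a residually finite group with the same profinite completion, hence is also $\Z/\alpha\Z$. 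By the elliptization theorem, $M_2(L')$ is therefore a lens space $\mathrm{L}(\alpha,\gamma)$. Then \autoref{thm: involution lens space} shows $L'$ is a two-bridge link $\mathbf{b}(\alpha,\beta')$. By \autoref{rmk: orientation two bridge}, the orientation chosen on $L'$ is the standard one for a suitable $\beta'$, so $L'$ is equipped with its Schubert orientation.

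To upgrade this to an isotopy we use \autoref{thm: Burde}. The composite $\pi_1(\X L)\to\pi_1(C_2(L))$-extension $\to \pi_1(M_2(L))=\Z/\alpha\Z$ defines a finite-index normal subgroup $\Delta\le\pi_1(\X L)$, the kernel of the dihedral $\pi$-orbifold quotient, of index $2\alpha$. Its cover is $\X{\widetilde L}\subseteq\widetilde{S^3}$. By the commutative diagram (\ref{diag: branched cover}), $\Phi$ maps $\overline{\Delta}$ onto the closure of the corresponding $\Delta'$, because $\phi$ is an isomorphism of finite groups $\Z/\alpha\Z$ and so preserves the trivial subgroup. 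We therefore obtain an isomorphism $\widetilde\Phi:\hatpi(\X{\widetilde L})\to\hatpi(\X{\widetilde L'})$ by restriction. Because $\Phi$ is peripheral regular and sends meridians to meridians, the restriction is peripheral regular with respect to the lift $\widetilde h$ of $h_\Phi$. Each $\widetilde h$ sends lifted meridians to lifted meridians, and $\widetilde h$ is orientation preserving, since the covers carry lifted orientations and $h_\Phi$ is orientation preserving. Applying \autoref{lem: criteria for perfect} to the hyperbolic link $\widetilde L$ (a finite cover of a hyperbolic manifold), $\widetilde\Phi$ is a perfect isomorphism for a matched component ordering. Then \autoref{prop: perfect linking number} yields $\lk(\widetilde L)=\lk(\widetilde L')$.

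The main obstacle is compatibility of labelings. We must show that the component matching induced by $\widetilde h$ is $\tau$-labeled on one side and $\tau'$-labeled on the other, for generators $\tau,\tau'$ of the deck groups. The deck group of $\X{\widetilde L}\to C_2(L)$ is $\pi_1(M_2(L))=\Z/\alpha\Z$ acting by conjugation on peripheral subgroups. Since $\Phi$ is equivariant with respect to $\phi$, conjugation by $\tau$ corresponds to conjugation by $\phi(\tau)=:\tau'$, which is again a generator. By malnormality and uniqueness of $h_\Phi$ (\autoref{thm: peripheral regular}), the boundary components of $\X{\widetilde L}$ are permuted compatibly: $\widetilde h\circ\tau=\tau'\circ\widetilde h$ on components. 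For even $\alpha$ we must also verify that the orbits of $J$-lifts and $K$-lifts are matched with the orbits of $J'$ and $K'$, which follows because $\Phi$ respects the component ordering. With this compatibility in hand, \autoref{thm: Burde} shows $L$ and $L'$ are isotopic, so $N\cong\X{L}$ and $\pi_1(N)\cong\pi_1(\X L)$.
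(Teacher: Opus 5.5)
Your proposal is correct and follows the paper's proof essentially step for step. The shared chain is: a perfect isomorphism via \autoref{thm: hyperbolic link}; the lens-space double branched cover forcing $L'=\mathbf{b}(\alpha,\beta')$ via Hodgson--Rubinstein; restriction of $\Phi$ to the kernel of the dihedral quotient; the $\phi$-equivariant matching of $\tau$- and $\tau'$-labelings; \autoref{lem: criteria for perfect} and \autoref{prop: perfect linking number} on the $\alpha$-fold cover; and finally Burde's theorem.

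The only soft spot is your citation in the torus-link case. \cite{Wil17} treats closed Seifert fibered spaces, and \autoref{thm: Wilton-Zalesskii} presupposes that $N$ has toral boundary. The paper instead cites \cite[Corollary 8.3 and Remark 8.4]{Xu25A}, which gives rigidity of these bounded Seifert exteriors in all of $\mathscr{M}$.
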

\begin{proof}
    According to \cite[Corollary 2]{Men84}, a two-bridge link  is either a torus link or a hyperbolic link. In the former case, $\X{L}$ is a Seifert fibered space, and it follows from   \cite[Corollary 8.3 and Remark 8.4]{Xu25A} that $\pi_1(\X{L})$ is profinitely rigid in $\mathscr{M}$. In the following, we assume that $L=\mathbf{b}(\alpha,\beta)$ is a hyperbolic two-bridge link. 

    Suppose $N$ is a compact orientable 3-manifold with $\hatpi(N)\cong \hatpi(\X L)$.  We show that $N$ is homeomorphic to $\X L$.
By \autoref{thm: hyperbolic link}, there is an oriented hyperbolic link $L'$ such that $N\cong \X{L'}$, and there is a perfect isomorphism $\Phi:\hatpi(\X{L})\to \hatpi(\X{L'})$.  
Note that the two-fold branched cover $M_2(L)$ is the lens space $\mathrm{L}(\alpha,\beta)$. According to \autoref{prop: branched cover}, $\hatpi(M_2(L'))\cong\hatpi(M_2(L))\cong \Z/\alpha\Z$. Therefore, $\pi_1(M_2(L'))\cong \Z/\alpha\Z$ since it is residually finite, so $M_2(L')$ is also a lens space with the same parameter $\alpha$. 
 \autoref{thm: involution lens space} then implies that $L'$, ignoring the orientation, is a two-bridge link with the same torsion $\alpha$. According to \autoref{rmk: orientation two bridge}, the orientation on $L'$ can be realized as the standard orientation of a Schubert normal  form $\mathbf{b}(\alpha,\beta')$. 

\def\-{\setminus}
\def\L{\mathrm{L}}


\begin{sloppypar}
In order to show $\mathbf{b}(\alpha,\beta')\cong \mathbf{b}(\alpha,\beta)$, we apply Burde's classification theorem. 
Construct the branched covers $(\widetilde{S^3},\widetilde{L})\to (\L(\alpha,\beta),\check L)\to (S^3,L)$ and $(\widetilde{S^3},\widetilde{L'})\to (\L(\alpha,\beta'),\check{L'}) \to (S^3, L')$ as in Subsection~\ref{sec: Burde}, and now we display the relation between them under $\Phi$.  
Recall that $\L(\alpha,\beta)\-n(\check{L})$ is the balanced two-fold cover $C_2(L)$ and $\L(\alpha,\beta)$ is the two-fold branched cover $M_2(L)$. By \autoref{prop: branched cover},  we obtain a commutative diagram:
\begin{equation*}
    \begin{tikzcd}[column sep=large]
            \hatpi(\X L) \arrow[d,"\Phi"',"\cong"] & \hatpi(\L(\alpha,\beta)\-n(\check{L})) \arrow[l,hook'] \arrow[d,"\check{\Phi}"',"\cong"] \arrow[r,two heads,"\rho=\widehat{\text{incl}_\ast}"] & \hatpi(\L(\alpha,\beta))\cong \Z/\alpha\Z \arrow[d,"\phi"',"\cong"]\\
            \hatpi(\X{L'})   & \hatpi(\L(\alpha,\beta')\-n(\check{L'})) \arrow[l, hook'] \arrow[r, two heads, "\rho'=\widehat{\text{incl}_\ast}"] & \hatpi(\L(\alpha,\beta'))\cong \Z/\alpha\Z
        \end{tikzcd}
\end{equation*}
Note that $\widetilde{S^3}\-n(\widetilde{L})$ is the $\alpha$-fold cover of $\L(\alpha,\beta)\-n(\check{L})$ corresponding to $\ker(\rho)$, and so is $\widetilde{S^3}\-n(\widetilde{L'})$ corresponding to $\ker(\rho')$. Thus, they are $\check{\Phi}$-corresponding finite covers, and we have the following commutative diagram
\begin{equation}\label{diag: alpha cyclic}
        \begin{tikzcd}
            1 \arrow[r]  &  \hatpi(\widetilde{S^3}\-n(\widetilde{L})) \arrow[r] \arrow[d,"\widetilde{\Phi}"',"\cong"] & \hatpi(\L(\alpha,\beta)\-n(\check{L})) \arrow[r,"\rho"] \arrow[d,"\check{\Phi}"',"\cong "] & \Z/\alpha\Z \arrow[r] \arrow[d,"\cong","\phi"'] & 1\\
             1 \arrow[r]  &  \hatpi(\widetilde{S^3}\-n(\widetilde{L'})) \arrow[r]   & \hatpi(\L(\alpha,\beta')\-n(\check{L'})) \arrow[r,"\rho'"]   & \Z/\alpha\Z \arrow[r]   & 1
        \end{tikzcd}
\end{equation}
where $\widetilde{\Phi}$ is the restriction of $\check{\Phi}$.

\end{sloppypar}

Let us now order the components of $\widetilde{L}$ and $\widetilde{L'}$. 
Fix a generator $\tau\in \Z/\alpha\Z=\hatpi(\L(\alpha,\beta))$, and let $\widetilde{L}=K_1\cups K_{\alpha}$ be $\tau$-labeled. According to \cite[Lemma 6.3]{Xu25A} and \cite[Lemma 4.5]{WZ17}, the isomorphism $\widetilde{\Phi}$ sends the conjugacy classes of the closures of the peripheral subgroups of $\hatpi(\widetilde{S^3}\-n(\widetilde{L}))$ bijectively to the conjugacy classes of the closures of the peripheral subgroups of $\hatpi(\widetilde{S^3}\-n(\widetilde{L'}))$. Order the components of $\widetilde{L'}$ as  $\widetilde{L'}=K_1'\cups K_{\alpha}'$ so that $\widetilde{\Phi}$ sends the conjugacy class of   $\overline{\pi_1(\partial N(K_i))}$ to the conjugacy class of $\overline{\pi_1(\partial N(K_i'))}$ for each $1\le i\le \alpha$. Group theoretically,  $\Z/\alpha \Z$ acts via outer automorphisms on the conjugacy classes of the closures of the peripheral subgroups of $\hatpi(\widetilde{S^3}\-n(\widetilde{L}))$  and $\hatpi(\widetilde{S^3}\-n(\widetilde{L'}))$, which is consistent with the deck transformation  group acting on the boundary components. 
The commutative diagram (\ref{diag: alpha cyclic}) then implies that the $\widetilde{\Phi}$-bijection between the conjugacy classes of the closures of  the   peripheral subgroups is $\Z/\alpha\Z$-equivariant via $\phi:\Z/\alpha\Z\to \Z/\alpha\Z$. Thus, with $\tau'=\phi(\tau)\in \Z/\alpha\Z$ being the corresponding generator, $\widetilde{L'}=K_1'\cups K_{\alpha}'$  is $\tau'$-labeled.

Finally, we show that $\widetilde{\Phi}$ is a perfect isomorphism. In fact, $\Phi$ is peripheral regular with respect to the homeomorphism $h_{\Phi}: \partial(\X{L})\to \partial (\X{L'})$ that matches up the preferred meridian-longitude bases. Thus, $\widetilde{\Phi}$ -- being the restriction of $\Phi$ -- is also peripheral regular with respect to $\widetilde{h}:\partial(\widetilde{S^3}\- n(\widetilde{L})) \to\partial(\widetilde{S^3}\- n(\widetilde{L'})) $  that fits into the following commutative diagram. 
\begin{equation}\label{cov boundary}
    \begin{tikzcd}
        \partial(\widetilde{S^3}\- n(\widetilde{L}) )\arrow[r,"\widetilde{h}"] \arrow[d,"\text{cover}"'] & \partial(\widetilde{S^3}\- n(\widetilde{L'})  )\arrow[d,"\text{cover}"]\\  
        \partial(\X{L}) \arrow[r,"h_\Phi"] & \partial (\X{L'})
    \end{tikzcd}
\end{equation}
By construction, $\widetilde{h}(\partial N(K_i))= \partial N(K_i')$ for each $1\le i \le \alpha $.   
The vertical covering maps in (\ref{cov boundary}) are $\pi_1$-injective on each component, and send the preferred meridians in $\widetilde{S^3}\-n(\widetilde{L})$ and $\widetilde{S^3}\-n(\widetilde{L'})$  
to   the  twice of the preferred meridians in  $\X{L}$ and $\X{L'}$. 
Thus, $\widetilde{h}$ sends the preferred meridian of $K_i$ to the preferred meridian of $K_i'$. 
In addition,  $\widetilde{S^3}\-n(\widetilde{L})$  and $\widetilde{S^3}\-n(\widetilde{L'})$  are equipped with the orientations lifted from $\X{L}$ and $\X{L'}$. 
Since $h_\Phi$ is orientation-preserving, the commutative diagram (\ref{cov boundary}) implies that $\widetilde{h}$  is also orientation-preserving. Hence, \autoref{lem: criteria for perfect} implies that $\widetilde{\Phi}$ is a perfect isomorphism. 

Then, \autoref{prop: perfect linking number} implies that   the two component-ordered oriented links $\widetilde{L}$ and $\widetilde{L'}$ in $\widetilde{S^3}$ have the same linking matrix: $\lk(\widetilde{L})=\lk(\widetilde{L'})$. According to \autoref{thm: Burde}, $L$ and $L'$ are isotopic (as oriented links), and therefore, $N\cong \X{L'}$ is homeomorphic to $\X{L}$. 
\end{proof}

\section{Montesinos links}
\subsection{Definition and classification} 
\begin{sloppypar}
Montesinos links are generalizations of two-bridge links. A Montesinos link is constructed by combining $n$ rational tangles of slopes $r_1=\frac{p_1}{q_1},\cdots, r_n=\frac{p_n}{q_n} \in \mathbb{Q}$ left-to-right in order, and then taking a numerator closure as shown in \autoref{fig: Montesinos}.  A Montesinos link so described is denoted by $\mathbf{M}(\frac{p_1}{q_1},\cdots, \frac{p_n}{q_n})$, where $p_i$ and $q_i$ are coprime integers. In this paper, we require that: 
\begin{enumerate}[leftmargin=*,label=(\roman*)]
\item $r_i\neq \infty$, since then the Montesinos link can be decomposed as a connected sum of two-bridge links;
\item $r_i\neq 0$, since the rational tangles of slope $0$ can be omitted;
\item $r_i\notin \Z$ if $n\ge 2$, since a rational tangle with integral slope can be merged into an adjacent tangle to obtain new rational tangle that gives a simpler presentation of the Montesinos link. 
\end{enumerate}

\begin{figure}[ht!]
\centering
\subfigure[$\mathbf{M}(\frac{p_1}{q_1},\cdots, \frac{p_n}{q_n})$]{\includegraphics[width=6.7cm]{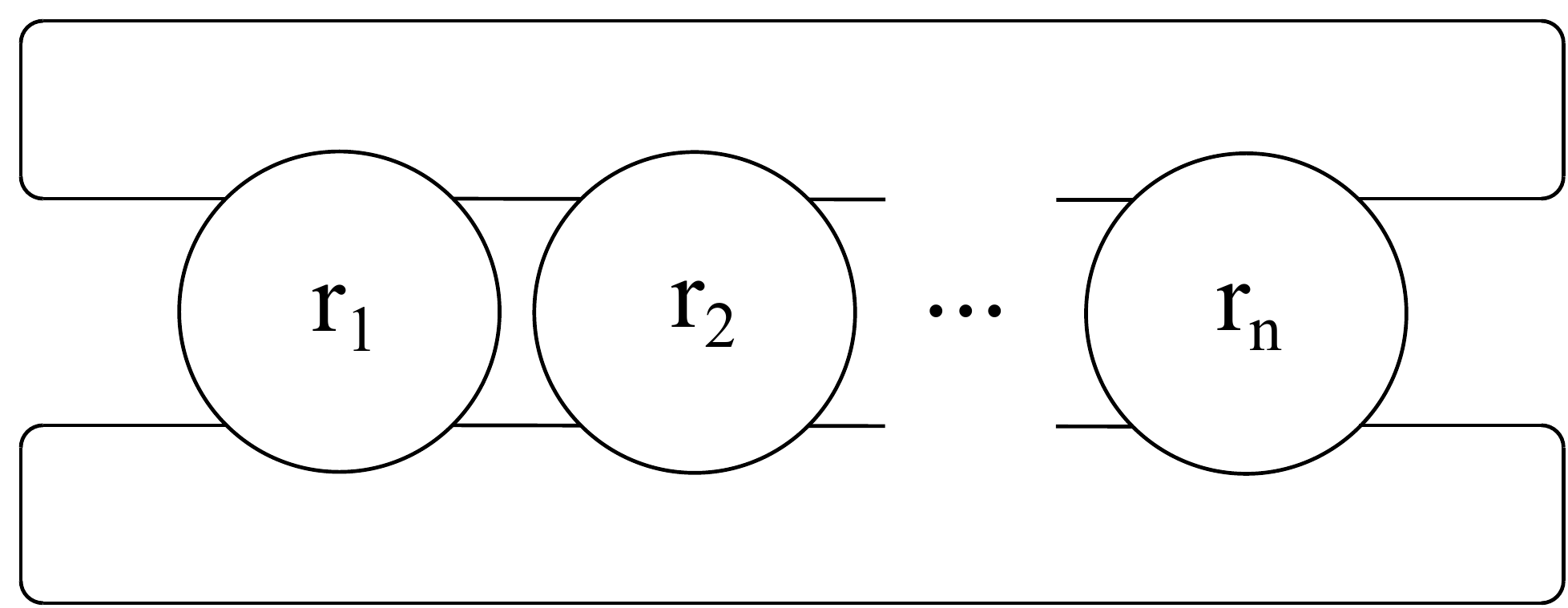}} 
\hspace{0.5cm}
\subfigure[$\mathbf{M}(\frac{3}{2},-\frac{2}{3}, \frac{1}{4})$]{\includegraphics[width=6.7cm]{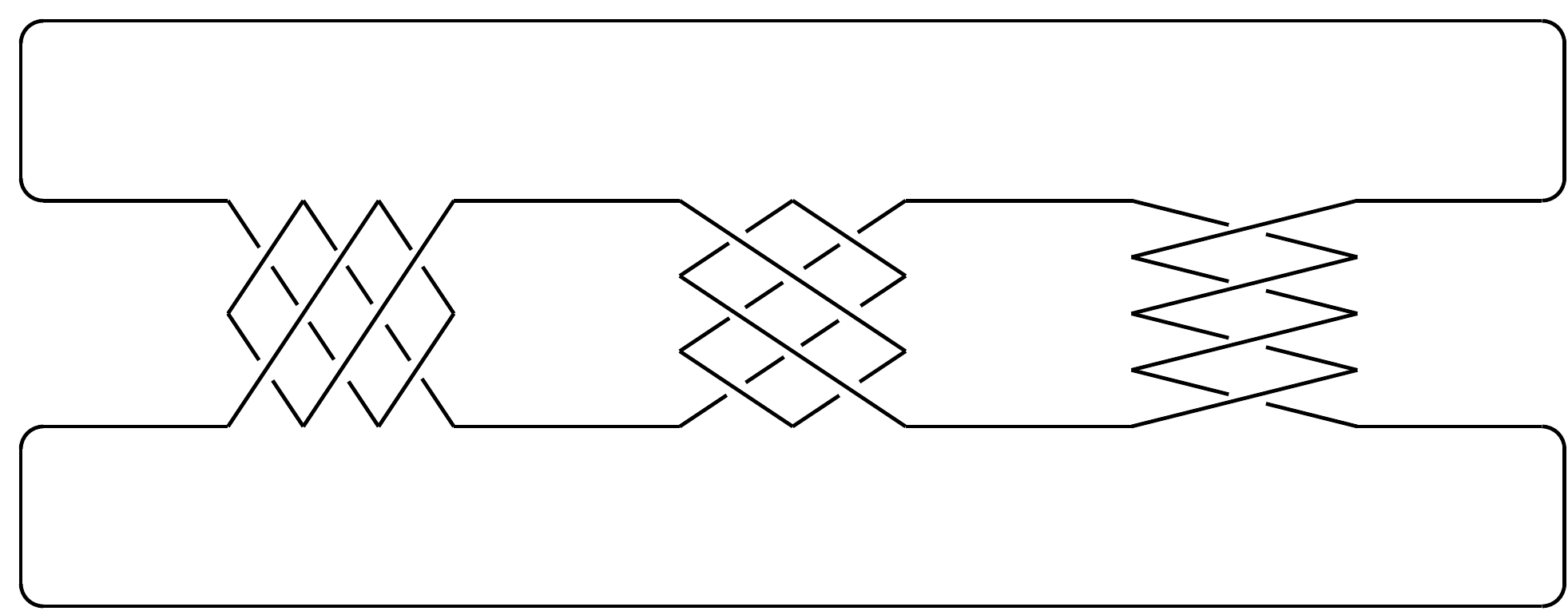}} 
\caption{Montesinos links}
\label{fig: Montesinos}
\end{figure}

We remind the readers  that   Montesinos links with no more than $2$ tangles are exactly the  two-bridge links.

\begin{proposition}\label{prop: Montesinos classification}
    \begin{enumerate}[label=(\arabic*), leftmargin=*]       
 \item The mirror image of a Montesinos link $\mathbf{M}(\frac{p_1}{q_1},\cdots, \frac{p_n}{q_n})$ is $\mathbf{M}(-\frac{p_1}{q_1},\cdots, -\frac{p_n}{q_n})$.
        \item Suppose $L=\mathbf{M}(\frac{p_1}{q_1},\cdots, \frac{p_n}{q_n})$ and $L'=\mathbf{M}(\frac{p_m'}{q_m'},\cdots, \frac{p_m'}{q_m'})$ are two unoriented Montesinos links with $m,n\ge 2$. Then, $L$ and $L'$ are isotopic if and only if $m=n$, $\sum_{i=1}^{n} \frac{p_i}{q_i}=\sum_{i=1}^{m}\frac{p_i'}{q_i'}$, and  up to cyclic permutation and reversal of order, $\frac{p_i}{q_i}\equiv \frac{p_i'}{q_i'}\pmod 1$ for each $i$. 
    \end{enumerate}
\end{proposition}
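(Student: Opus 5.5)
This proposition is classical, and I would prove it by assembling known results rather than from scratch. For (1), I would use the definition of $\mathbf{M}(\frac{p_1}{q_1},\cdots,\frac{p_n}{q_n})$ as the numerator closure of the tangle sum, together with the convention that a rational tangle of slope $\frac{p}{q}$ is built from a continued fraction of twists. Mirroring (reflecting $S^3$ through the projection plane) switches every crossing. This sends each twist coefficient $a_j$ in the continued fraction expansion to $-a_j$, so a rational tangle of slope $r$ goes to one of slope $-r$. Since the tangle sum and numerator closure are preserved by the reflection, the mirror of $\mathbf{M}(r_1,\dots,r_n)$ is $\mathbf{M}(-r_1,\dots,-r_n)$. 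The only real check is that the sign conventions for slopes are the ones fixed in \autoref{fig: Montesinos}.

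For (2), I would route through the double branched cover and Seifert fibered space theory, in the spirit of the paper. First, $M_2(\mathbf{M}(\frac{p_1}{q_1},\cdots,\frac{p_n}{q_n}))$ is the Seifert fibered space over $S^2$ with unnormalized invariants $(\frac{p_1}{q_1},\dots,\frac{p_n}{q_n})$. Here each rational tangle lifts to a solid torus fibered with an exceptional fiber of type $\frac{p_i}{q_i}$, or the tangle ordering corresponds to the cyclic order of the exceptional fibers along the base. The Seifert invariants of such a space are well defined exactly up to the following operations: shifting each $\frac{p_i}{q_i}$ by an integer while keeping $\sum \frac{p_i}{q_i}$ fixed (the Euler number), and, for orientation, changing all signs. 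This gives the necessity of the conditions. Integer shifts preserving the sum are also realized directly on the diagram by moving a half-twist between adjacent tangles, and cyclic permutation and reversal are realized by isotopies of the numerator closure (rotating the diagram around an axis, or flipping it over). This gives sufficiency.

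The main obstacle is necessity: an isotopy of links need not come from a diagrammatic move. Here I would invoke the theorem of Bonahon--Siebenmann (and Boileau--Zimmermann for the Seifert case) that, for $n\ge 3$, the involution of $M_2(L)$ is unique up to conjugacy, and is fiber-preserving with respect to the unique Seifert fibration. Consequently a homeomorphism of links lifts to a fiber-preserving homeomorphism of double branched covers, and this forces equality of the normalized Seifert invariants in order up to the dihedral symmetry of the $n$ exceptional points on the base orbifold.

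The case $n=2$ (two-bridge links) has to be checked separately, via \autoref{thm: two bridge classification}, since the lens space can carry several Seifert fibrations. There I would translate $\mathbf{M}(\frac{p_1}{q_1},\frac{p_2}{q_2})$ into the Schubert form $\mathbf{b}(\alpha,\beta)$, and verify that the stated conditions match $\alpha=\alpha'$ and $\beta'\equiv\beta^{\pm1}\pmod\alpha$. The $\pm1$ exponent corresponds to the reversal of order. This bookkeeping is the step I expect to need the most care.
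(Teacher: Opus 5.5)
\textbf{The two-tangle case fails.} Your part (1) is what the paper means by ``clear from definition''. For (2) the paper gives no argument and just cites Bonahon (see also Zieschang and Bonahon--Siebenmann), so your double-branched-cover outline is a sketch of what lies behind that citation.

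The genuine problem is your last step. The case $m=2$ or $n=2$ cannot be checked against Schubert's theorem, because the ``only if'' direction of (2) is false there. Under the paper's conventions, $\mathbf{M}(\frac1a,\frac1b)$ is the pretzel link $P(a,b)$, i.e.\ the closure of the $2$-braid $\sigma_1^{a+b}$. Hence $\mathbf{M}(\frac13,\frac13)$ and $\mathbf{M}(\frac12,\frac14)$ are both the torus link $T(2,6)$. Yet $\frac13+\frac13\neq\frac12+\frac14$, and the residues $\{\frac13,\frac13\}$ and $\{\frac12,\frac14\}$ do not agree mod $1$ in any order. Likewise $\mathbf{M}(\frac15,-\frac12)$ and $\mathbf{M}(\frac17,-\frac14)$ are both the trefoil.

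This is exactly the phenomenon you flagged: the lens space $M_2(L)$ carries many Seifert fibrations. Equivalently, a two-bridge link has many two-tangle presentations, obtained by splitting its continued fraction at different places. But this breaks the statement rather than merely requiring bookkeeping. Also, for two tangles, reversal of order is the same as cyclic permutation, so it cannot account for $\beta\mapsto\beta^{-1}$.

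The classification you are reproducing holds only for at least three tangles. That is also the only case the paper uses: in \autoref{prop: Montesinos characterize}, $m=3$ is forced by the Seifert invariants before (2) is invoked. You should restrict (2) to $m,n\ge 3$ rather than try to prove the $n=2$ case.

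\textbf{A correction to your $n\ge 3$ sketch.} The covering involution of $M_2(L)$ is \emph{not} unique up to conjugacy in general. For $n\ge 4$, reordering the tangles typically gives links that are neither isotopic nor mirror to $L$ but have homeomorphic double branched covers; see the remark after \autoref{prop: Montesinos characterize}. If the involution were unique up to conjugacy, the cyclic order of the tangles would be invisible, contradicting the very conclusion you draw.

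What the argument actually needs is an equivariant form of uniqueness of the Seifert fibration. The covering involutions should be made fiber-preserving, and the lifted homeomorphism should be made fiber-preserving equivariantly. It then induces a homeomorphism of the base $S^2$ that commutes with the reflections whose fixed circles pass through the cone points in the order of the tangles. That is where the dihedral ordering comes from. The Seifert invariants of $M_2(L)$ alone only give the sum and the unordered multiset of residues.
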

\begin{proof}
The first statement is clear from definition; and the second statement was proven by \cite{Bonahon}, see also \cite{Zie84} or \cite{BS16}. 
\end{proof}

In the following proposition, a {\em Seifert link} is a link $L$ in $S^3$ such that $\X{L}$ is a Seifert fibered space. 


\begin{proposition}[{\cite{Oer84} and \cite[Theorem A.8]{BS16}}]\label{prop: Montesinos geometrization classification}
Any Montesinos link $L$ falls into exactly one of the following three possibilities. 
\begin{enumerate}[label=(\arabic*), leftmargin=*]
    \item $L$ is a hyperbolic link.
    \item\label{M.classification.2} $L$ is  a Seifert link  when it is isotopic to mirror image to one of: $\mathbf{b}(n,1)$ ($n > 0$), $\mathbf{M}(-\frac{1}{2},\frac{1}{2},\frac{1}{p})$ ($p\neq 0$), $\mathbf{M}(-\frac{1}{2},\frac{1}{3},\frac{1}{3})$, $\mathbf{M}(-\frac{1}{2},\frac{1}{3},\frac{1}{4})$, $\mathbf{M}(-\frac{1}{2},\frac{1}{3},\frac{1}{5})$.
    \item\label{M.classification.3} $L$ is isotopic or mirror image to one of the following exceptional cases: $\mathbf{M}(\frac{2}{3},-\frac{1}{3},-\frac{1}{3})$, $\mathbf{M}(\frac{1}{2},-\frac{1}{4},-\frac{1}{4})$, $\mathbf{M}(\frac{1}{2},-\frac{1}{3},-\frac{1}{6})$, $\mathbf{M}(\frac{1}{2},\frac{1}{2},-\frac{1}{2},-\frac{1}{2})$,  where $\X{L}$ is a graph manifold.
\end{enumerate}
\end{proposition}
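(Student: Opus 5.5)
The plan is to treat the trichotomy through Thurston's hyperbolization theorem for Haken manifolds. The exterior $\X{L}$ of a link with non-empty boundary is Haken as soon as it is irreducible. Provided it is irreducible and boundary-incompressible, it is hyperbolic if and only if it contains no essential torus and no essential annulus. So the proof splits into four steps:
\begin{enumerate}[label=(\alph*),leftmargin=*]
\item show $\X{L}$ is irreducible and boundary-incompressible;
\item list the Montesinos links whose exterior contains an essential annulus;
\item list those whose exterior contains an essential torus;
\item check that the three cases are mutually exclusive.
\end{enumerate}
Mutual exclusivity in (d) follows from the uniqueness of geometric and JSJ structures. A hyperbolic manifold has no essential tori or annuli. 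A Seifert fibered exterior is not a non-trivial graph manifold with a non-empty JSJ decomposition. The four exceptional links will be shown to have a non-trivial JSJ torus whose pieces are Seifert fibered with incompatible fibrations.

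For (a), when $n\ge 3$ (and the conventions (i)--(iii) are imposed) I would invoke Oertel's analysis. Each rational tangle ball is a twisted $I$-bundle neighbourhood, so any essential sphere or compressing disk can be isotoped to meet the separating Conway spheres in a standard way, and then ruled out by the non-integrality of the $r_i$. For $n\le 2$ we are dealing with two-bridge links. These are known (Menasco, \cite{Men84}) to be either torus links, giving the family $\mathbf{b}(n,1)$ up to equivalence, or hyperbolic.

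For (b), an essential annulus in a link exterior in $S^3$ forces $\X{L}$ to be Seifert fibered or to contain an essential torus. In the Seifert case I would use the Burde--Murasugi classification: $L$ is a union of fibers of a Seifert fibration of $S^3$ with at most two exceptional fibers. I would then pass to the two-fold branched cover $M_2(L)$, which for a Montesinos link is the Seifert fibered space over $S^2$ with cone points $q_1,\dots,q_n$. Matching these two descriptions leaves only finitely many possible slope patterns. These are exactly the torus links $\mathbf{b}(n,1)$, the family $\mathbf{M}(-\frac12,\frac12,\frac1p)$, and the three links $\mathbf{M}(-\frac12,\frac13,\frac1k)$ with $k=3,4,5$, whose orbifold bases are spherical $(2,3,k)$ triangle orbifolds. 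Each of these is then verified directly to be Seifert.

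Step (c) is the main obstacle: classifying essential tori. My plan is to follow Oertel's normal-form argument. Put an essential torus $T$ in minimal position with respect to the Conway spheres bounding the tangles, and show that $T\cap$(each tangle ball) consists of tubes parallel to the tangle strands or of annuli swallowing a pair of adjacent tangles. This forces $T$ to be the boundary of a neighbourhood of a Conway sphere splitting off tangles whose complement is an $I$-bundle over a Klein bottle or a Möbius band configuration. Such an $I$-bundle exists only when the adjacent tangle slopes satisfy $\sum\frac{1}{q}$-type Euclidean constraints, namely $(3,3,3)$, $(2,4,4)$, $(2,3,6)$, or $(2,2,2,2)$ with matching signs. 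I expect the bookkeeping of signs and of the residues $p_i/q_i \bmod 1$ to be the delicate part, and there I would cross-check against the Euclidean orbifold bases of $M_2(L)$. Equivariant geometrization of the Montesinos involution tells us $M_2(L)$ is Euclidean-based precisely in these cases. That check singles out the four exceptional links in \ref{M.classification.3}, whose exteriors are therefore graph manifolds with a genuine JSJ torus.
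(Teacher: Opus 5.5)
\textbf{There is a genuine gap, mainly in step (c).} The paper gives no argument for this statement. It imports it from Oertel and from Bonahon--Siebenmann [Theorem A.8] and uses it as a black box, so your outline amounts to reproving Oertel's theorem. Step (c) carries essentially all the content, and the mechanism you sketch there does not work.

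\emph{The torus description is wrong.} As stated, the boundary of a regular neighbourhood of a Conway sphere in $\X{L}$ is a pair of four-punctured spheres, not a torus. Moreover, an essential torus cannot avoid all the Conway spheres: the complement of the tangle balls and connecting arcs is a handlebody, and each tangle exterior is a handlebody. So the torus must cut across the Conway spheres.

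\emph{The cross-check uses the wrong criterion.} The claim ``$M_2(L)$ is Euclidean-based precisely in these cases'' does not single out the four links. The base $S^2(q_1,\dots,q_n)$ is Euclidean for \emph{every} Montesinos link with denominators $(3,3,3)$, $(2,4,4)$, $(2,3,6)$ or $(2,2,2,2)$. This includes infinitely many hyperbolic links, such as $\mathbf{M}(\frac13,\frac13,\frac13+k)$ with $k\neq -1$ (the pretzel knot $P(3,3,3)$ when $k=0$), whose double branched covers are Nil manifolds. What the four listed links share is that $M_2(L)$ is \emph{flat}: Euclidean base \emph{and} $\sum_i p_i/q_i=0$. ``Matching signs'' does not express this.

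\emph{The transfer of tori is missing.} Even with the right criterion, you give no argument that an essential torus in $\X{L}$ forces $M_2(L)$ to be flat, and no such transfer is automatic. For $n\ge 4$ tangles, $M_2(L)$ always contains incompressible vertical tori, yet these descend to Conway spheres, and $\X{L}$ is hyperbolic outside the flat $(2,2,2,2)$ case. Supplying this link between $\X{L}$ and $M_2(L)$ (or the $\pi$-orbifold $O_2(L)$) is exactly what Oertel's analysis of incompressible surfaces does. Bonahon--Siebenmann's characteristic toric suborbifolds of the Seifert fibred orbifold $O_2(L)$ do the same job.

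\textbf{Step (b) has a related gap.} Comparing the Seifert manifold obtained from a Burde--Murasugi description of $L$ with $(0,0;\frac{p_1}{q_1},\dots,\frac{p_n}{q_n})$ cannot, by itself, decide whether a Montesinos link is Seifert.
\begin{itemize}
\item For example, $T(5,5)$ (five Hopf fibres) has $M_2$ Seifert fibred over $S^2(2,2,2,2,2)$, and this fibration is unique.
\item The same manifold is the double branched cover of a five-tangle Montesinos link with all denominators $2$.
\item That link is a closed $5$-chain whose non-adjacent components have linking number $0$. So it is not $T(5,5)$, and by the proposition it is hyperbolic.
\end{itemize}
The missing ingredient is the involution. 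The Montesinos involution reverses the fibre orientation, whereas the covering involution of a Seifert link preserves the lifted fibration together with its fibre orientation. When the fibration of $M_2(L)$ is unique and the fibre has infinite order, these are incompatible. This is what forces $\pi_1(M_2(L))$ to be finite for a Seifert Montesinos link, apart from the flat $(2,2,2,2)$ case, which also fibres over the Klein bottle. Even among the spherical bases $(2,2,p)$, $(2,3,3)$, $(2,3,4)$, $(2,3,5)$ you must still pin down the Euler number. For instance, $\mathbf{M}(\frac12,\frac13,\frac13)$ is hyperbolic although its double branched cover is spherical, unlike $\mathbf{M}(-\frac12,\frac13,\frac13)$.
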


\end{sloppypar}

\subsection{Double branched cover}
\begin{proposition}[\cite{Mon73}]\label{prop: Montesinos two fold cover}
    Let $L$ be a Montesinos link $\mathbf{M}(\frac{q_1}{p_1},\cdots,\frac{q_n}{p_n})$. Then, the double branched cover $M_2(L)$ is the (oriented) Seifert fibered space $(0,0;\frac{q_1}{p_1},\cdots,\frac{q_n}{p_n})$.
\end{proposition}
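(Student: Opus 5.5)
The plan is to cut $(S^3,L)$ into $n$ tangle balls plus a single ``frame'' region, compute the double branched cover of each piece, and then glue. The circle fibration on the frame cover will extend over the tangle covers, and the gluing slopes will give the Seifert invariants.

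\emph{Step 1 (decomposition).} Write $S^3=B_1\cup\cdots\cup B_n\cup W$. Here $B_i$ is the $3$-ball containing the $i$-th rational tangle $T_i$ of slope $\frac{q_i}{p_i}$ (in the notation of the statement), and $W$ is the closure of the complement. Then $(W,W\cap L)$ is homeomorphic to $(P\times I, \{x_1,\dots,x_{2n}\}\times I)$ up to the standard identification. Here $P$ is a disk with $n-1$ holes (a sphere with $n$ holes), and the strands of $L$ in $W$ are the $2n$ ``connecting'' arcs of \autoref{fig: Montesinos}, with two endpoints on each $\partial B_i$. Concretely, I would realize $W$ as a neighborhood of the horizontal band containing the connecting arcs and the numerator closure arcs, and check that it is the pattern obtained from $\mathbf{M}(0,\dots,0)$-type closures.

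\emph{Step 2 (local covers).} The double cover of $B_i$ branched over the two arcs of a rational tangle is a solid torus $V_i$. The double cover of $\partial B_i$ branched at $4$ points is the torus $\partial V_i$. Under the standard identification of the $4$-punctured sphere (pillowcase) with $T^2/\pm1$, an essential simple closed curve of slope $r$ on $\partial B_i\setminus L$ lifts to two parallel curves of slope $r$ on $\partial V_i$. Since the slope-$\frac{q_i}{p_i}$ curve bounds a disk in $B_i$ separating the two arcs of $T_i$, its lift is the meridian of $V_i$. For $W$, the double branched cover is $P\times S^1$. The involution acts on $P\times I\cup P\times I$ glued along the branch arcs, or equivalently as $(x,\theta)\mapsto(\rho(x),-\theta)$ for a reflection $\rho$ of $P$ whose fixed arcs meet $\partial P$ in the $2n$ branch points. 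The circle factor then lies over the vertical direction of the diagram, that is, the lift of the slope-$\infty$ curve on each $\partial B_i$.

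\emph{Step 3 (gluing and identifying invariants).} We obtain $M_2(L)=P\times S^1\cup V_1\cup\cdots\cup V_n$. On each boundary torus, take the section curve $\partial P\times\{pt\}$, which is the lift of the slope-$0$ curve, and the fiber, which is the lift of the slope-$\infty$ curve. In this basis the meridian of $V_i$ is the lift of slope $\frac{q_i}{p_i}$, namely $p_i[\text{section}]+q_i[\text{fiber}]$ up to sign conventions. Since $P$ is planar and there is no extra obstruction term (the numerator closure contributes only integer-free, i.e. $0$, twisting, by the normalization (ii)--(iii)), this is by definition the Seifert fibered space $(0,0;\frac{q_1}{p_1},\cdots,\frac{q_n}{p_n})$ with base $S^2$.

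\emph{Main obstacle.} The expected hardest part is the sign and orientation bookkeeping in Step 3. We must make sure the orientation lifted from $S^3$, the orientation convention for the section and fiber, and the convention ``slope $p/q$ tangle'' together produce exactly $+\frac{q_i}{p_i}$ rather than its negative or inverse, with total Euler class term $0$. I would fix this by checking the single case $n=1$: there $\mathbf{M}(\frac{q}{p})$ is the two-bridge link $\mathbf{b}(p,q)$ with cover $\mathrm{L}(p,q)$, and comparing with the known fact $M_2(\mathbf{b}(\alpha,\beta))=\mathrm{L}(\alpha,\beta)$. Since the construction is local in each $B_i$, this pins down the convention uniformly for all $n$.
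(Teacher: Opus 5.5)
The paper gives no proof of this statement; it is quoted from Montesinos \cite{Mon73}. Your outline is the classical argument, and its local ingredients are right. The double cover of a rational-tangle ball is a solid torus whose meridian is the lift of the tangle slope. Once the frame is known to lift to $P\times S^1$, with fibers over the slope-$\infty$ curves and sections $P\times\{\mathrm{pt}\}$ over the slope-$0$ curves, the meridian $p_i\,s+q_i\,f$ ($s$ a section, $f$ a fiber) gives the invariant $q_i/p_i$.

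The gap is exactly that identification of the branched double cover of the frame $(W,W\cap L)$, which is the only non-local input. You assert it, and the model offered in Step 1 to support it is false for $n\ge 2$:
\begin{itemize}
\item $W=S^3\setminus\bigcup_i\mathrm{int}\,B_i$ has $n$ boundary $2$-spheres, whereas $P\times I$ is a genus-$(n-1)$ handlebody with connected boundary. Already for $n=2$, $W\cong S^2\times I$ while $P\times I$ is a solid torus.
\item The arcs of $W\cap L$ join $\partial B_i$ to $\partial B_{i+1}$; they do not run in an $I$-direction.
\item A double cover of $P\times I$ branched along vertical arcs would be a product $\widetilde P\times I$ with connected boundary. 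The cover of $W$ must instead have $n$ torus boundary components, one over each four-pointed sphere $\partial B_i$.
\end{itemize}
So $P\times S^1$ does not follow from Step 1. The involution $(x,\theta)\mapsto(\rho(x),-\theta)$ you write in Step 2 is the correct one, but nothing in the proposal shows that its quotient is $(W,W\cap L)$ with fibers lying over slope $\infty$.

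One clean way to close this: the frame does not depend on the tangles, and $\mathbf{M}(0,\dots,0)$ is the two-component unlink $U$ (the top and bottom circles).
\begin{itemize}
\item The branched double cover of $U$ is $S^2\times S^1$ with $\tau(x,\theta)=(\rho(x),-\theta)$, where $\rho$ is reflection in a great circle $C$.
\item The image of $S^2\times\{\pm\pi/2\}$ is a sphere separating the two components.
\item $\tau$-invariant fibered solid tori $D_i\times S^1$ over $n$ points of $C$ project to balls meeting $U$ in $0$-tangles; one checks these are arranged as in the Montesinos diagram.
\item Hence $W$ lifts to $P\times S^1$, with slope-$0$ curves lifting to $\partial P\times\{\pm\pi/2\}$ and slope-$\infty$ curves lifting to fibers.
\item The cover of $W$ is the same for $L$ as for $U$, because the meridians of the $2n$ arcs generate $H_1(W\setminus L)$.
\end{itemize}
Your Step 3 then goes through.

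There are also two smaller errors. First, the vanishing Euler term comes from this global section, not from the normalizations (ii)--(iii). Those play no role; an integral tangle simply contributes an integral summand. Second, your calibration is wrong. With the numerator closure used here, $\mathbf{M}(\frac1p)$ is the unknot, and $(0,0;\frac qp)$ has $H_1\cong\mathbb{Z}/q$. So the one-tangle case is a two-bridge link of torsion $|q|$ with cover $\mathrm{L}(q,\pm p)$, not $\mathbf{b}(p,q)$ with cover $\mathrm{L}(p,q)$. As written, your $n=1$ check would disagree on the order of $H_1$, not merely on a sign.
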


\begin{corollary}\label{prop: Montesinos characterize}
    Suppose $L$ and $L'$ are unoriented Montesinos links in $S^3$, and the double branched covers $M_2(L)$ and $M_2(L')$ are homeomorphic. If $L$ has exactly $3$ rational tangles, then $L'$ is isotopic or mirror image to $L$. 
\end{corollary}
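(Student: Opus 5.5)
The plan is to combine \autoref{prop: Montesinos two fold cover} with the classification of Seifert fibered spaces, and then pass back to links via \autoref{prop: Montesinos classification}.

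Write $L=\mathbf{M}(\frac{q_1}{p_1},\frac{q_2}{p_2},\frac{q_3}{p_3})$ with each $|p_i|\ge 2$; this is allowed by conventions (i)--(iii). Then $M_2(L)$ is the Seifert fibered space $(0,0;\frac{q_1}{p_1},\frac{q_2}{p_2},\frac{q_3}{p_3})$. It has base $S^2$ with exactly three exceptional fibers of orders $|p_1|,|p_2|,|p_3|$. Its rational Euler number is $e=-\sum q_i/p_i$, up to the sign convention. Similarly, write $L'=\mathbf{M}(\frac{q_1'}{p_1'},\dots,\frac{q_m'}{p_m'})$, so that $M_2(L')=(0,0;\frac{q_1'}{p_1'},\dots,\frac{q_m'}{p_m'})$, and fix a homeomorphism $f\colon M_2(L)\to M_2(L')$.

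\textbf{Case of a unique Seifert fibration.} Suppose $M_2(L)$ admits a unique Seifert fibration up to isotopy. Then $f$ can be isotoped to be fiber preserving, so $m=3$ and the multisets $\{|p_i|\}$ and $\{|p_i'|\}$ agree. By the classification of oriented Seifert fibered spaces, if $f$ preserves orientation, then after reordering the invariants we get $\frac{q_i}{p_i}\equiv\frac{q_i'}{p_i'}\pmod 1$ and $\sum\frac{q_i}{p_i}=\sum\frac{q_i'}{p_i'}$. If $f$ reverses orientation, the same holds with all signs flipped for $L'$, which by \autoref{prop: Montesinos classification}(1) means replacing $L'$ by its mirror image. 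With three tangles, every permutation of the indices is a composition of cyclic permutations and reversal. Hence \autoref{prop: Montesinos classification}(2) applies and gives that $L'$ is isotopic to $L$ or to its mirror image.

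\textbf{Case of non-unique fibrations (main obstacle).} Seifert fibered spaces with base $S^2$ and three exceptional fibers whose fibration is not unique are few. They are the prism and lens-type spherical manifolds, and the cases where the base orbifold is Euclidean, such as $(2,2,2)$-type quotients with $e=0$. Among these, the spaces with finite fundamental group of type $(2,2,n)$ also fiber over $S^2$ with at most two exceptional fibers, i.e.\ as lens spaces in some cases; alternatively they have a second fibration over $\mathbb{RP}^2$. If $M_2(L)$ were a lens space, it would be $M_2$ of a two-bridge link by \autoref{thm: involution lens space}. I expect the condition $|p_i|\ge2$ for all three $i$ to rule this out, except for degenerate cases that the conventions exclude.

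For the remaining cases, I would argue as follows. Any alternative fibration of such a space either has a non-orientable base $\mathbb{RP}^2$, which cannot arise from a Montesinos link since $M_2(L')$ has base $S^2$, or has base $S^2$ with the same exceptional fiber data. In the latter case the invariants are again determined up to orientation. So in every case the unnormalized invariants of $M_2(L')$ over $S^2$ coincide with those of $L$ up to global sign, and the argument of the first case concludes. Verifying this finite list of exceptional spaces is the step I expect to require the most care. For it I would rely on the standard classification of Seifert fibrations on small Seifert spaces.
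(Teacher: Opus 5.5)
Your argument is essentially the paper's: \autoref{prop: Montesinos two fold cover} identifies both double branched covers as Seifert fibered spaces over $S^2$, and the classification of Seifert fibered spaces (the paper just cites Jaco, Theorem VI.17) gives $m=3$ and matches the invariants up to a global sign and a permutation. Then \autoref{prop: Montesinos classification} finishes, and your remark that every permutation of three indices is dihedral makes explicit what the paper leaves implicit.

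Your second case is simpler than you fear, and it contains a few misstatements. $\pi_1(M_2(L))$ surjects onto the non-cyclic triangle group $\Delta(|p_1|,|p_2|,|p_3|)$, so $M_2(L)$ is never a lens space. $S^2(2,2,2)$ is spherical rather than Euclidean, and $e\neq 0$ there. The only competing fibrations, for prism manifolds, are over $\mathbb{RP}^2$, so the fibration over $S^2$ is unique up to isomorphism, exactly as your final reduction asserts.
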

\begin{proof}
 Suppose $L= \mathbf{M}(\frac{q_1}{p_1},\frac{q_2}{p_2},\frac{q_3}{p_3})$ and $L'=\mathbf{M}(\frac{p_m'}{q_m'},\cdots, \frac{p_m'}{q_m'})$. According to \autoref{prop: Montesinos two fold cover},  $M_2(L)\cong (0,0;\frac{q_1}{p_1},\frac{q_2}{p_2},\frac{q_3}{p_3})$ is homeomorphic to $M_2(L')\cong (0,0;\frac{p_m'}{q_m'},\cdots, \frac{p_m'}{q_m'})$. According to the classification of Seifert fibered spaces \cite[Theorem VI.17]{Jaco},  these two present the same Seifert fibration. That is to say, $m=3$, and there exist  $\epsilon\in\{\pm1\}$ and a bijection $\sigma:\{1,2,3\}\to \{1,2,3\}$ such that $\frac{p_1}{q_1}+\frac{p_2}{q_2}+\frac{p_3}{q_3}=\epsilon(\frac{p_1'}{q_1'}+\frac{p_2'}{q_2'}+\frac{p_3'}{q_3'})$ and $\frac{p_i}{q_i}\equiv \epsilon\frac{p_{\sigma(i)}'}{q_{\sigma(i)}'}\pmod 1$. Then, regardless of the choice of $\epsilon$ and $\sigma$,  \autoref{prop: Montesinos classification} implies that  $L'$ is isotopic or mirror image to $L$.  
\end{proof}

\begin{remark}
When $L$ has at least $4$ tangles, the Montesinos link $L'$ with $M_2(L)\cong M_2(L')$ could differ from $L$ by reordering its rational tangles. As such, $L'$ differs from $L$ by a sequence of mutations. 
\end{remark}

\begin{proposition}\label{prop: two fold Seifert}
    Suppose $L$ is a link in $S^3$ such that the two-fold branched cover $M_2(L)$ admits a Seifert fibration over the $2$-sphere with $3$ exceptional fibers. Then, $L$ is either a Seifert link or a Montesinos link.  
\end{proposition}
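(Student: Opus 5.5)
The plan is to study the covering involution $\iota$ of the branched cover $M_2(L)\to S^3$, whose fixed set is the preimage of $L$, and to use the equivariant theory of Seifert fibered spaces to bring $\iota$ into a standard form. Write $M=M_2(L)$. Since $M$ is Seifert fibered over $S^2$ with three exceptional fibers, it is either a spherical manifold with $\pi_1$ finite non-cyclic, or it is aspherical with base orbifold $S^2(a,b,c)$ and $1/a+1/b+1/c\le 1$. In particular $M$ is not a lens space and is not $S^1\times S^2$. (If some orbifold-type degenerate case arose, it would have to be handled separately, but the three-exceptional-fiber hypothesis excludes lens spaces, which were already treated in \autoref{thm: involution lens space}.) The quotient $M/\iota=S^3$ is an orbifold with singular locus $L$ and cone angle $\pi$.

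The first step is to show that $\iota$ preserves some Seifert fibration of $M$, after an isotopy. In the aspherical case, the Seifert fibration is unique up to isotopy, except for a few small exceptions (the Euclidean ones). By the equivariant results of Meeks--Scott, the finite group $\langle \iota\rangle$ then preserves a Seifert fibration. In the spherical case we would invoke the orbifold theorem (or the elliptization of orbifolds due to Dinkelbach--Leeb / Boileau--Leeb--Porti). This makes $\iota$ conjugate to an isometry of the spherical structure, and isometries of spherical Seifert manifolds with non-cyclic $\pi_1$ preserve a Seifert fibration. We do not need to verify this ourselves: the conclusion in both cases is that $\iota$ is fiber-preserving for a Seifert fibration with three exceptional fibers over $S^2$.

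The second step is to analyze the induced action on the base orbifold $B=S^2(a,b,c)$. We distinguish two cases.
\begin{enumerate}
\item[(a)] $\iota$ preserves the orientation of the fibers. Then, since $\iota$ preserves the orientation of $M$ and has a $1$-dimensional fixed set, it acts on $B$ as an orientation-preserving map. The quotient $M/\iota=S^3$ then inherits a Seifert fibration, possibly as an orbifold, with singular set a union of fibers. Hence $L$ is a union of fibers of a Seifert fibration of $S^3$, so $\X{L}$ is Seifert fibered and $L$ is a Seifert link.
\item[(b)] $\iota$ reverses the orientation of the fibers. Then it acts on $B$ as an orientation-reversing involution, that is, a reflection of $S^2$ whose fixed circle passes through all three cone points. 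This is forced, because a reflection of $S^2$ fixing a circle must fix each cone point (the cone orders and the symmetric arrangement need to be checked here). In this situation the standard Montesinos construction of \cite{Mon73} recovers the quotient. One decomposes $B$ into disks, each containing one cone point and bisected by the reflection circle. Over each disk, $\iota$ restricted to the fibered solid torus is the standard involution whose quotient is a $3$-ball containing a rational tangle of slope determined by the Seifert invariant. Gluing these balls along the pattern of the reflection circle recovers a Montesinos link $\mathbf{M}(\frac{q_1}{p_1},\frac{q_2}{p_2},\frac{q_3}{p_3})$, consistent with \autoref{prop: Montesinos two fold cover}.
\end{enumerate}

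The main obstacle is the first step, namely making $\iota$ fiber-preserving, especially when $M$ admits several Seifert fibrations or is spherical. For this we would rely on the cited equivariant results, and on \cite[Proposition 3.2]{BoileauZimmermann}, which classifies $\pi$-orbifolds with Seifert-fibered two-fold covers. A second point needing care is case (b): we must verify that the reflection circle meets every cone point, so that no exceptional fiber is swapped or left off the fixed circle. Swapping is impossible because a reflection has order $2$ and fixes the circle pointwise; a cone point off the circle, however, would pair with an equal-order partner and lead to four or more singular points. With exactly three cone points, the pairing argument forces at least one to lie on the circle. A parity count then shows that all three do, or else the configuration is really case (a) after switching to the other fiber orientation.
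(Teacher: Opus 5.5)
Your overall strategy matches the reference \cite{Mot17}, which the paper invokes when $\pi_1(M_2(L))$ is infinite: make the covering involution $\iota$ preserve a Seifert fibration, then read off the quotient from the induced action on the base. For finite $\pi_1$ the paper does not geometrize $\iota$; it uses Dunbar's classification \cite{Dun88} of spherical orbifolds with underlying space $S^3$.

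The genuine gap is in case (b). A reflection of $S^2$ can exchange two points off its fixed circle, and nothing you wrote rules out the configuration where one cone point lies on the mirror and the other two, necessarily of equal order $n\ge 2$, are swapped. Your parity count only shows that one or three cone points lie on the mirror. A swapped pair plus one point on the mirror is exactly three singular points, not four or more. Moreover, whether $\iota$ reverses the fibers does not depend on which orientation you put on them, so ``switching to the other fiber orientation'' cannot move you into case (a). In the swapped configuration the quotient base is a disk with mirror boundary, one corner reflector and an \emph{interior} cone point of order $n$, and the Montesinos construction you invoke does not apply.

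The missing idea is to use that the underlying space $|M/\iota|$ is $S^3$. Split the quotient base into a collar $A$ of the mirror and an inner disk $D_0$ around the interior cone point.
\begin{enumerate}
\item[(i)] Take an arc crossing the mirror transversally away from the corner. Its preimage in $M$ is an annulus on which $\iota$ acts orientation-preservingly with two fixed points, exchanging the boundary circles. Its image in the quotient is therefore a disk bounded by a regular fiber $h$.
\item[(ii)] Hence the part of $|M/\iota|$ over $A$ is a solid torus whose meridian is $h$.
\item[(iii)] The part over $D_0$ is a fibered solid torus with an exceptional fiber of order $n$, on whose boundary $h$ meets the meridian $n$ times.
\end{enumerate}
So $|M/\iota|$ would be a lens space with $H_1\cong\mathbb{Z}/n\neq 0$, a contradiction. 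Only after this are all three cone points on the mirror, and then your Montesinos conclusion follows.

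A smaller point: in the spherical case your step 2 presupposes that the invariant fibration is the one over $S^2(a,b,c)$, but prism manifolds also fiber over $\mathbb{RP}^2$. The claim that an involutive isometry of $S^3/G$ preserves a fibration of the former kind needs an actual argument or a reference. The paper sidesteps this by citing \cite{Dun88}.
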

\begin{proof}
    When $|\pi_1(M_2(L))|<\infty$, the result follows from a complete  classification  of spherical orbifolds with underlying space $S^3$ by \cite{Dun88} (see also \cite[Theorem 1.2]{MS25} for another proof via classifying involutions on spherical manifolds with 1-dimensional fixed points). And when $|\pi_1(M_2(L))|=\infty$, the conclusion follows from finding an involution-invariant Seifert fibration on $M_2(L)$; an explicit proof can be found in \cite[Proposition 3.3]{Mot17}. 
\end{proof}

\subsection{Distinguishing Seifert fibered spaces}

In this subsection, we prove a variation of Wilkes' result \cite{Wil17} for profinite rigidity among Seifert fibered spaces. 

\def\G{\Gamma}

\begin{definition}
Let $\G_1$ and $\G_2$ be finitely generated groups. An isomorphism $\Phi:\widehat{\G_1}\to \widehat{\G_2}$ is called {\em regular} if the abelianization of $\Phi$,  $\Phi^{\mathrm{Ab}}: \widehat{\G_1}^{\mathrm{Ab}}\to \widehat{\G_2}^{\mathrm{Ab}}$, is the profinite completion of an isomorphism $\G_1^{\mathrm{ab}}\to \G_2^{\mathrm{ab}}$. The isomorphism     $\Phi:\widehat{\G_1}\to \widehat{\G_2}$  is called {\em strongly regular} if for any pair of $\Phi$--corresponding finite-index subgroups $\G_1'\le \G_1$ and $\G_2'\le \G_2$,  the restricted isomorphism $\Phi': \widehat{\G_1'}\to \widehat{\G_2'}$  is regular. 
\end{definition}

\begin{proposition}\label{prop: Seifert strongly regular}
    Let $M$ and $N$ be closed orientable Seifert fibered spaces which are not lens spaces, and suppose $\Phi: \hatpi(M)\to \hatpi(N)$ is a strongly regular isomorphism. Then, $M$ and $N$ are homeomorphic. 
\end{proposition}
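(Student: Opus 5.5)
Our plan is to follow Wilkes' profinite rigidity argument for closed Seifert fibered spaces \cite{Wil17}, using strong regularity to remove the ambiguity his argument leaves. Recall the obstruction there: profinitely equivalent but non-homeomorphic closed Seifert fibered spaces come in families $(b;\frac{q_1}{p_1},\dots,\frac{q_n}{p_n})$ versus $(b;\frac{kq_1}{p_1},\dots,\frac{kq_n}{p_n})$, with $k$ coprime to the relevant orders and the Euler number zero. Only the geometries $\H^2\times\R$ and $\mathbb{E}^3$/Nil-type cases with $e=0$ allow such pairs. Spherical manifolds other than lens spaces have non-cyclic finite fundamental groups and are handled by the classification of spherical space forms together with regularity.

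\textbf{Step 1: common base orbifold and exceptional data.} By \autoref{thm: Wilton-Zalesskii}, $N$ is Seifert fibered with the same geometry. When the base orbifold is hyperbolic, the fiber subgroup is the unique maximal procyclic normal subgroup of $\hatpi(M)$, so $\Phi$ maps $\overline{\langle h\rangle}$ onto $\overline{\langle h'\rangle}$. Then $\Phi$ descends to an isomorphism of the profinite orbifold groups. Using goodness of virtually surface groups (\autoref{prop: goodness}) and the structure of conjugacy classes of finite-order elements, the base orbifolds agree. In particular, the cone points, their orders $p_i$, and the orientability type match.

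\textbf{Step 2: the twist.} Next, $\Phi(h)=h'^{\kappa}$ for some $\kappa\in\Zx$. Following Wilkes, comparing the images of the cone-point generators $a_i$ with $a_i^{p_i}h^{q_i}=1$ shows that there is a fixed $\kappa$ (up to sign) with $q_i'\equiv \kappa q_i \pmod{p_i}$ for all $i$. In addition, either $e(M)=e(N)=0$, or the rational Euler numbers agree up to the orientation sign. So it remains to show $\kappa\equiv\pm1$ modulo each $p_i$. This is the step where we must use regularity.

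\textbf{Step 3: using strong regularity.} Because $\Phi$ maps $\overline{\langle h\rangle}$ to itself with multiplier $\kappa$, we pass to a $\Phi$-corresponding finite-index subgroup: a finite cover in which the fiber class has infinite order in $H_1$. If $e\neq 0$, the classes already agree. If $e=0$, we can take a cover pulled back from a finite cover of the base where the fiber is a primitive free summand of $H_1$. Strong regularity says $\Phi'^{\mathrm{Ab}}$ is the completion of an integral isomorphism $\psi$. Restricted to the $\widehat{\Z}$-span of the fiber (a direct summand detected integrally), it is multiplication by $\kappa$. It must also preserve the integral lattice, so $\kappa\in\Zx\cap\Q=\{\pm1\}$.

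The main obstacle is producing a cover in which the fiber's image is a direct summand compatibly on both sides, so that $\kappa$ is read off integrally. If this is done carelessly, the scalar is visible only modulo torsion. We would construct this cover explicitly, as a cyclic branched cover of the base that kills the cone points, and then invoke the correspondence of finite-index subgroups. The Euclidean-base and $S^2\times\R$ cases are finite in number and can be checked separately. Once $\kappa=\pm1$, the Seifert invariants coincide up to orientation, and the classification of Seifert fibered spaces gives $M\cong N$.
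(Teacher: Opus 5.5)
There is a genuine gap in Steps 2--3: you treat the fiber multiplier as if it were the Hempel scale factor. Write $\lambda$ for your $\kappa$, so $\Phi(h)=(h')^{\lambda}$. The induced isomorphism $\psi:\hatpiorb(\O_M)\to\hatpiorb(\O_N)$ of base orbifold groups has its own unit $\mu\in\widehat{\Z}^{\times}$ (Wilkes). Namely, $\psi^\ast$ acts on $\H^2(\,\cdot\,;\Z/n)$ by $\mu$, and each cone generator $a_i$ is sent to a conjugate of $(a'_{\sigma(i)})^{\mu}$ modulo the fiber. Write $\Phi(a_i)=g\,(a'_{\sigma(i)})^{\mu}(h')^{t_i}g^{-1}$ and apply $\Phi$ to $a_i^{p_i}h^{q_i}=1$. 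This gives $\mu q'_{\sigma(i)}\equiv\lambda q_i\pmod{p_i}$, so the invariants are governed by $\lambda\mu^{-1}$, not by $\lambda$ alone.

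Your Step 3 only proves $\lambda=\pm1$. For an orientable base with $e=0$ this already follows from regularity of $\Phi$ itself, since the fiber has infinite order in $H_1(M)$, and that is how the paper handles it. Step 3 says nothing about $\mu$. An isomorphism with $\lambda=1$ and $\mu\not\equiv\pm1\pmod{p_i}$ would still produce a non-homeomorphic Hempel pair. So your reduction ``it remains to show $\kappa\equiv\pm1$ modulo each $p_i$'' does not finish the proof.

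The missing ingredient is using strong regularity on the base. In the case the paper needs (base $S^2$ with three cone points), $H_1$ of the orbifold is finite, so regularity of $\Phi$ cannot see $\mu$ at all. The paper's route is:
\begin{itemize}
\item transfer strong regularity from $\Phi$ to $\psi$;
\item restrict to a characteristic surface cover $\Sigma\to\O$ of genus $\ge1$, where the restriction is regular;
\item show that a regular automorphism of $\hatpi(\Sigma)$ acts on $\H^2$ by $\pm1$, via a degree-one map $\Sigma\to T^2$ and goodness (equivalently, an integral automorphism of $H_1(\Sigma)$ preserving the intersection form up to a unit has multiplier $\pm1$);
\item push this back to $\O$, using that $H^2(\piorb(\O);\Z)\to H^2(\pi_1\Sigma;\Z)$ has nonzero image $l\Z$, and work modulo $ml$ to get $\mu\equiv\pm1\pmod m$ for every $m$.
\end{itemize}
Your cover $\Sigma\times S^1$ already contains this information in the $\Z^{2g}$ summand of its $H_1$, but you only read off the fiber summand.

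A separate, minor point: contrary to your preamble, Wilkes shows that non-rigid pairs occur only in $\mathbb{H}^2\times\mathbb{E}^1$ geometry, not in Euclidean or Nil cases.
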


Before proving \autoref{prop: Seifert strongly regular}, we need a couple of lemmas. The next lemma is a generalization of \cite[Sub-lemma 1]{Xu25}.

\begin{lemma}\label{lem:strong regular criteria}
    Let $G$, $H$, $K$, $L$ be finitely generated groups, and let $p : G \to K$ and $q : H \to L$
be group homomorphisms such that $p(G)$ has finite index in $K$ and $q(H)$ has finite index in
$L$. Suppose $\Phi : \widehat G \to  \widehat H$ and $\Psi : \widehat K \to  \widehat L$ are isomorphisms such that the following diagram commutes. 
\begin{equation*}
\begin{tikzcd}
        \widehat{G} \arrow[d,"\widehat{p}"'] \arrow[r,"\Phi"] & \widehat{H} \arrow[d,"\widehat{q}"]\\
        \widehat{K} \arrow[r,"\Psi"]& \widehat{L}
\end{tikzcd}
\end{equation*}
If $\Phi$ is strongly regular, then $\Psi$ is also strongly regular. 
\end{lemma}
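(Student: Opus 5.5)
The plan is to reduce an arbitrary pair of $\Psi$-corresponding finite-index subgroups of $K$ and $L$ to a pair of $\Phi$-corresponding subgroups of $G$ and $H$ by taking preimages. We then use regularity of the restriction of $\Phi$, plus a torsion argument in profinite completions of finitely generated abelian groups, to deduce regularity of the restriction of $\Psi$.

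\emph{Step 1: pulling back corresponding subgroups.} Let $K'\le K$ and $L'\le L$ be $\Psi$-corresponding finite-index subgroups, so $\Psi(\overline{K'})=\overline{L'}$. Set $G'=p^{-1}(K')$ and $H'=q^{-1}(L')$; these have finite index in $G$ and $H$.

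The open subgroup $\widehat{p}^{-1}(\overline{K'})$ of $\widehat G$ meets $G$ exactly in $p^{-1}(K')=G'$. By the lattice correspondence \cite[Proposition 3.2.2]{RZ10}, it therefore equals $\overline{G'}$. Similarly $\overline{H'}=\widehat q^{-1}(\overline{L'})$.

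Commutativity of the square gives
$$\Phi(\overline{G'})=\widehat q^{-1}(\Psi(\overline{K'}))=\widehat q^{-1}(\overline{L'})=\overline{H'},$$
so $G'$ and $H'$ are $\Phi$-corresponding. Restricting all four maps yields a commutative square with $\Phi':\widehat{G'}\to\widehat{H'}$ and $\Psi':\widehat{K'}\to\widehat{L'}$. Here $p'=p|_{G'}:G'\to K'$ has image $p(G)\cap K'$, which has finite index in $K'$; likewise for $q'$. By strong regularity, $\Phi'^{\mathrm{Ab}}=\widehat f$ for some isomorphism $f:G'^{\mathrm{ab}}\to H'^{\mathrm{ab}}$.

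\emph{Step 2: abelianize.} Write $A=K'^{\mathrm{ab}}$ and $B=L'^{\mathrm{ab}}$. These are finitely generated abelian groups, and they embed in their completions $\widehat A=\widehat{K'}^{\mathrm{Ab}}$ and $\widehat B=\widehat{L'}^{\mathrm{Ab}}$. The abelianized square gives
$$\Psi'^{\mathrm{Ab}}\circ\widehat{p'^{\mathrm{ab}}}=\widehat{q'^{\mathrm{ab}}}\circ\widehat f.$$
Since $\widehat f$ maps $G'^{\mathrm{ab}}$ into $H'^{\mathrm{ab}}$, the map $\Psi'^{\mathrm{Ab}}$ sends $p'^{\mathrm{ab}}(G'^{\mathrm{ab}})$ into $B$. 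The subgroup $p'^{\mathrm{ab}}(G'^{\mathrm{ab}})$ has finite index $n$ in $A$. Hence for every $x\in A$ we get $n\,\Psi'^{\mathrm{Ab}}(x)\in B$.

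\emph{Step 3: torsion-freeness of $\widehat B/B$ (the main obstacle).} The key claim is that $\widehat B/B$ is torsion-free for any finitely generated abelian group $B$. By decomposing $B\cong\Z^r\oplus T$ with $T$ finite, we have $\widehat B=\widehat\Z^r\oplus T$, so it suffices to treat $\widehat\Z/\Z$.

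Suppose $y\in\widehat\Z$ and $ny=m\in\Z$. Then $m\equiv 0$ in $\widehat\Z/n\widehat\Z=\Z/n\Z$, so $n\mid m$ in $\Z$. Thus $n(y-m/n)=0$, and since $\widehat\Z$ is torsion-free, $y=m/n\in\Z$.

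Applying the claim in Step 2 gives $\Psi'^{\mathrm{Ab}}(A)\subseteq B$.

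\emph{Step 4: conclude.} The inverse $\Phi^{-1}$ is also strongly regular, since the restriction of $\Phi^{-1}$ to corresponding subgroups has abelianization $\widehat{f^{-1}}$. The same diagram with $\Phi^{-1},\Psi^{-1}$ then gives $(\Psi'^{\mathrm{Ab}})^{-1}(B)\subseteq A$. Hence $\Psi'^{\mathrm{Ab}}$ restricts to an isomorphism $g:A\to B$.

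Both $\Psi'^{\mathrm{Ab}}$ and $\widehat g$ are continuous and agree on the dense subgroup $A$, so $\Psi'^{\mathrm{Ab}}=\widehat g$. Thus $\Psi'$ is regular, and since $K',L'$ were an arbitrary $\Psi$-corresponding pair, $\Psi$ is strongly regular.
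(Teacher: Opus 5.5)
Your proposal is correct and follows the paper's proof. Step 1 is exactly the paper's reduction: pull back to $G'=p^{-1}(K')$ and $H'=q^{-1}(L')$, check that they are $\Phi$-corresponding, and note that $p'(G')=p(G)\cap K'$ has finite index in $K'$. The only difference is in the last step. Where the paper simply cites \cite[Sub-lemma 1]{Xu25} to pass regularity from $\Phi'$ to $\Psi'$, you prove that transfer directly, using the finite-index image landing in $B$, the torsion-freeness of $\widehat{B}/B$, and symmetry via $\Phi^{-1}$, which makes the argument self-contained.
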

\begin{proof}
   Let  $K'\le K$ and $L'\le L$ be any $\Psi$--corresponding finite-index subgroups, and let $\Psi':\widehat{K'}\to \widehat{L'}$ be the restriction of $\Psi$. We show that $\Psi'$ is regular.  
   
   Let $G'=p^{-1}(K')$ and $H'=q^{-1}(L')$, which are finite-index subgroups in $G$ and $H$, and let $p':G'\to K'$ and $q':H'\to L'$ be the restrictions of $p$ and $q$. Then, $p'(G')=p(G)\cap K'$ has finite index in $K'$, and $q'(H')=q(H)\cap L$ has finite index in $L'$. The commutative diagram implies that  $G'$ and $H'$ are $\Phi$-corresponding finite-index subgroups of $G$ and $H$.  Let $\Phi':\widehat{G'}\to \widehat{H'}$ be the restriction of $\Phi$. Then, $\widehat{q'}\circ \Phi'= \Psi'\circ \widehat{p'}$. By assumption, $\Phi'$ is regular, so \cite[Sub-lemma 1]{Xu25} implies that $\Psi'$ is also regular, which finishes the proof. 
\end{proof}

\begin{lemma}\label{lem: non-trivial image}
    Let $\O$ be a closed  orientable 2-orbifold with non-positive orbifold Euler characteristic, and let $\Sigma$ be a finite-sheeted cover of $\O$ so that $\Sigma$ is a surface. Then, the map $H^2(\piorb(\O);\Z)\to H^2(\pi_1(\Sigma);\Z)\cong \Z$ induced by the covering map has non-trivial image.
\end{lemma}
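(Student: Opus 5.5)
The plan is to reduce the statement to a comparison of Euler classes. The key observation is that $H^2(\piorb(\O);\Z)$ contains the Euler class of a suitable Seifert fibration, and that pulling back to a finite cover multiplies the Euler number by the covering degree.

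First, replace $\Sigma$ by the orbifold $\O$ as a quotient. Since $\chi^{\mathrm{orb}}(\O)\le 0$, $\O$ is good, and $\piorb(\O)$ is either a Euclidean or a Fuchsian cocompact group. The cover $\Sigma\to\O$ corresponds to a finite-index torsion-free subgroup $\pi_1(\Sigma)\le\piorb(\O)$, and $\Sigma$ is a closed orientable surface of genus at least $1$, so $H^2(\pi_1(\Sigma);\Z)\cong\Z$ via evaluation on the fundamental class $[\Sigma]$.

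Next, produce a class in $H^2(\piorb(\O);\Z)$. Take a central extension
\[
1\to\Z\to E\to\piorb(\O)\to 1
\]
realized as the fundamental group of a closed orientable Seifert fibered space $M$ over $\O$ with nonzero rational Euler number $e(M)\ne0$. Such an $M$ exists: choose Seifert invariants matching the cone points of $\O$ and adjust the integer $b$ so that $e\neq0$. Since $\O$ is aspherical in the orbifold sense, the fiber subgroup is infinite cyclic and central, so this extension is classified by a class $\xi\in H^2(\piorb(\O);\Z)$. A cleaner alternative is the orbifold tangent circle bundle: its extension class is the orbifold Euler class, whose rational Euler number is $\chi^{\mathrm{orb}}(\O)$. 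That is nonzero in the hyperbolic case but vanishes in the Euclidean case, so I would use the general Seifert space with $e\neq0$.

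Then pull back along the inclusion $\pi_1(\Sigma)\to\piorb(\O)$. The restricted extension is $\pi_1(M')$, where $M'\to M$ is the finite cover induced by $\Sigma\to\O$ without unwrapping the fiber. This $M'$ is a circle bundle over $\Sigma$ whose Euler number is $d\cdot e(M)$, where $d$ is the degree of $\Sigma\to\O$, using the standard formula $e(\widetilde M)=\frac{d}{k}e(M)$ with $k=1$ fiber degree. Since $M'$ is an honest circle bundle over a surface, this Euler number is an integer, and it equals the image of $\xi$ in $H^2(\pi_1(\Sigma);\Z)\cong\Z$ up to sign. Because it is nonzero, the image of $\xi$ is nonzero.

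The main point to check carefully is this last identification. The group-cohomology class of the pulled-back central extension must correspond to the topological Euler class of the pulled-back bundle. This holds because $\Sigma$ is aspherical, so $H^2(\pi_1(\Sigma);\Z)=H^2(\Sigma;\Z)$ and the extension class of $1\to\Z\to\pi_1(M')\to\pi_1(\Sigma)\to1$ is the Euler class. A second point is that the pullback of the extension class under a group homomorphism classifies the pulled-back extension, which is formal. The Euler-number multiplicativity under horizontal covers is standard (e.g. Scott's survey), so I do not expect a serious obstacle.
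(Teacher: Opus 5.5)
Your proposal is correct and follows essentially the same route as the paper: take a closed Seifert fibered space $M$ over $\O$ with $e(M)\neq 0$ and view $\pi_1(M)$ as a central extension of $\piorb(\O)$ by $\Z$. Then pull back along $\Sigma\to\O$ to a circle bundle over $\Sigma$ with Euler number $[\Sigma:\O]\,e(M)\neq 0$. The only difference is in the last step: the paper concludes from the pulled-back extension not splitting, whereas you identify the restricted class with this nonzero Euler number, and the two are equivalent.
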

\begin{proof}
    Let $M$ be a Seifert fibered space over $\O$ with non-zero Euler number. Then, we obtain a central extension $1\to \Z\to \pi_1(M)\to \piorb(\O)\to 1$. Pulling this Seifert fibration back along the orbifold covering $\Sigma\to \O$ yields a finite sheeted horizontal covering $\widetilde{M}\to M$, so that $\widetilde{M}$ is a Seifert fibered space over $\Sigma$ with Euler number $e(\widetilde{M})=[\Sigma:\O]e(M)\neq 0$. Consequently, we obtain a pull-back diagram of central extensions 
    \begin{equation*}
        \begin{tikzcd}
            1 \arrow[r] & \Z \arrow[r] \arrow[d, equal] & \pi_1(\widetilde{M}) \arrow[dr, phantom, "\usebox\pullback" , very near start, color=black] \arrow[r] \arrow[d] & \pi_1(\Sigma) \arrow[r] \arrow[d,"\text{cover}_\ast"] & 1\\
            1 \arrow[r] & \Z \arrow[r] & \pi_1(M) \arrow[r] & \piorb(\O) \arrow[r] & 1
        \end{tikzcd}
    \end{equation*}
    so that the first row does not split. Thus, the covering map sends the cohomology class in $H^2(\piorb(\O);\Z)$ corresponding to the second row to a non-trivial cohomology class in $H^2(\pi_1\Sigma;\Z)$ corresponding to the first row. 
\end{proof}

\begin{lemma}\label{lem: surface mapping degree}
    Let $\Sigma$ be a closed orientable surface  of genus $g\ge 1$. Suppose $\phi:\hatpi(\Sigma)\to \hatpi(\Sigma)$ is a regular isomorphism. Then, for any positive integer $n$, $\phi^\ast: \Z/n\cong \H^2(\hatpi(\Sigma);\Z/n)\to \H^2(\hatpi(\Sigma);\Z/n)\cong \Z/n$ is a scalar multiplication by $\pm 1$. 
\end{lemma}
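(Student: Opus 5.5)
The plan is to reduce the statement to integral cohomology and the cup product pairing, using cohomological goodness (\autoref{prop: goodness}) to move between the profinite and discrete settings.

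\emph{Step 1: pass to coefficients.} Since $\pi_1(\Sigma)$ is cohomologically good, the canonical map identifies $\H^\ast(\hatpi(\Sigma);\Z/n)$ with $H^\ast(\pi_1(\Sigma);\Z/n)$. In particular $\H^1(\hatpi(\Sigma);\Z/n)\cong \mathrm{Hom}(\hatpi(\Sigma)^{\mathrm{Ab}},\Z/n)\cong \mathrm{Hom}(H_1(\Sigma),\Z/n)$. Regularity says $\phi^{\mathrm{Ab}}=\widehat{f}$ for some isomorphism $f:H_1(\Sigma;\Z)\to H_1(\Sigma;\Z)$. Hence $\phi^\ast$ on $\H^1(-;\Z/n)$ is the reduction mod $n$ of the integral map $f^\ast$ on $H^1(\Sigma;\Z)\cong\Z^{2g}$, compatibly for all $n$.

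\emph{Step 2: use the cup product.} The cup product $\H^1\times\H^1\to\H^2$ is natural under $\phi^\ast$ and, via goodness, agrees with the ordinary cup product on $\Sigma$. Let $\omega$ be the intersection form on $H^1(\Sigma;\Z)$, and write $\phi^\ast$ on $\H^2(-;\Z/n)\cong\Z/n$ as multiplication by $d_n$. Naturality gives
$$\omega(f^\ast x,f^\ast y)\equiv d_n\,\omega(x,y)\pmod n$$
for all integral classes $x,y$. Choose a symplectic basis and the pair $x=a_1$, $y=b_1$ with $\omega(x,y)=1$. Then $d_n\equiv c\pmod n$, where $c=\omega(f^\ast a_1,f^\ast b_1)\in\Z$ is a single integer independent of $n$. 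It follows that $\omega(f^\ast x,f^\ast y)-c\,\omega(x,y)$ is divisible by every $n$, hence zero, so $f^\ast$ scales $\omega$ by the integer $c$ exactly.

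\emph{Step 3: pin down $c$.} Because $f^\ast$ is an automorphism of $\Z^{2g}$, we have $\det f^\ast=\pm1$. The Gram matrix $J$ of $\omega$ is unimodular, and the relation $(f^\ast)^{T}Jf^\ast=cJ$ yields $c^{2g}=(\det f^\ast)^2=1$. Therefore $c=\pm1$, and $d_n\equiv\pm1$ with the same sign for every $n$.

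The main obstacle is the compatibility asserted in Step 2: checking that the identification $\H^2(\hatpi(\Sigma);\Z/n)\cong\Z/n$ coming from goodness intertwines the profinite cup product with the ordinary one. This matters because $\phi$ itself is not induced by a map of surfaces, so naturality must be taken on the profinite side. I would handle this by noting that $i^\ast$ is a ring homomorphism for continuous cochains restricted to the dense subgroup. The same cup product computation also shows that $\H^2$ is generated by cup products of degree-one classes, so $\phi^\ast$ is completely determined on $\H^2$ by its action on $\H^1$.
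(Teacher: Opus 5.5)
Your proof is correct. It rests on the same idea as the paper's proof: the action of $\phi$ in top degree is detected by a pair of degree-one classes, and regularity turns those into honest integral data. The packaging, however, is dual to the paper's.

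The paper avoids cup products. It takes a degree-one map $F:\Sigma\to T^2$ and uses that any homomorphism to the abelian group $\pi_1(T^2)$ factors through $H_1(\Sigma;\Z)$. This gives an abstract homomorphism $G$ with $\widehat{F_\ast}\circ\phi=\widehat{G}$. It then compares images of the fundamental class in $\H_2(-;\widehat{\Z})\cong\widehat{\Z}\otimes_{\Z}H_2(-;\Z)$, which relies on goodness. This yields $\lambda=m$ with $\lambda\in\Zx$ and $m\in\Z$, hence $\lambda=m=\pm1$, and a universal-coefficient step passes to $\Z/n$ coefficients. If $F$ is chosen to pull the two generators of $H^1(T^2;\Z)$ back to your $a_1,b_1$, then the paper's integer $m$ (the degree of $G$) is exactly your $c=\omega(f^\ast a_1,f^\ast b_1)$.

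Your version works directly with the $\Z/n$ coefficients in the statement. It needs only that the goodness isomorphism is multiplicative, which is immediate at the cochain level, as you note. The paper instead needs functoriality of completion, the identification of profinite $\H_2$, and the final universal-coefficient step.

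Your Step 3 is longer than necessary. Since $\phi^\ast$ is an automorphism of $\H^2(\hatpi(\Sigma);\Z/n)\cong\Z/n$, each $d_n$ is a unit mod $n$. So the integer $c$ is coprime to every $n$, hence $c=\pm1$; this is precisely the paper's closing step. Your relation $(f^\ast)^{T}Jf^\ast=cJ$ and the determinant computation are correct. They only add the extra information that $f^\ast$ scales the intersection form by $\pm1$.
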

\begin{proof}
    Suppose that the abelianization $\phi_\ast: \hatH (\Sigma;\Z) \to \hatH(\Sigma;\Z) $ is the profinite completion of an isomorphism $h: H_1(\Sigma;\Z)\to H_1(\Sigma;\Z)$. Note that  
    $\Sigma$ admits a degree $1$ map $F$ to the torus $T^2$.  Since $\pi_1(T^2)\cong \Z^2$ is abelian, the homomorphism between the fundamental groups factors through the abelianization as $F_\ast: \pi_1(\Sigma)\to H_1(\Sigma;\Z)\xrightarrow{f}\pi_1(T^2)\cong \Z^2$. We define another homomorphism $G:\pi_1(\Sigma)\to H_1(\Sigma;\Z)\xrightarrow{h}H_1(\Sigma;\Z)\xrightarrow{f}\pi_1(T^2)$. By construction, the following diagram commutes. 
    \begin{equation*}
        \begin{tikzcd}
\hatpi(\Sigma) \arrow[dd, "\widehat{G}"'] \arrow[rrr, "\phi"] \arrow[rd] &                                                                             &                                                     & \hatpi(\Sigma) \arrow[dd, "\widehat{F_\ast}"] \arrow[ld] \\
                                                                         & \hatH (\Sigma;\Z)  \arrow[r, "\widehat{h}"] & \hatH (\Sigma;\Z)  \arrow[rd, "\widehat{f}"'] &                                                          \\
\hatpi(T^2) \arrow[rrr, "id"]                                            &                                                                             &                                                     & \hatpi(T^2)                                             
\end{tikzcd}
    \end{equation*}
Thus, on the homology level, we have a commutative diagram (see \cite[Proposition 5.19]{Xu25}).
\def\tensor{\widehat{\Z}\otimes_{\Z}}
\begin{equation*}
 \begin{tikzcd}
     \tensor H_2(\pi_1(\Sigma);\Z) \cong \H_2(\hatpi(\Sigma);\widehat{\Z}) \arrow[r,"\phi_\ast"] \arrow[d,"\widehat{G}_\ast"] & \H_2(\hatpi(\Sigma);\widehat{\Z}) \cong \tensor H_2(\pi_1(\Sigma);\Z) \arrow[d,"\widehat{F}_\ast"] \\ 
     \tensor H_2(\pi_1(T^2);\Z) \cong \H_2(\hatpi(T^2);\widehat{\Z}) \arrow[r,"id"]   & \H_2(\hatpi(T^2);\widehat{\Z}) \cong \tensor H_2(\pi_1(T^2);\Z)
 \end{tikzcd}   
\end{equation*}
Since $F$ is a degree $1$ map, $\widehat{F}_\ast$ sends the fundamental class $1\otimes[\Sigma]$ to $1\otimes [T^2]$; and since $\widehat{G}$ is the profinite completion of the abstract group homomorphism  $G:\pi_1(\Sigma)\to \pi_1(T^2)$, the goodness \autoref{prop: goodness} implies that $\widehat{G}_\ast$ sends   $1\otimes[\Sigma]$ to $m\otimes [T^2]$ for some $m\in \Z$. Moreover, $\phi$ is an isomorphism, so $\phi_\ast$ sends $1\otimes [\Sigma]$ to $\lambda\otimes [\Sigma]$ for some $\lambda\in\Zx$. The commutative diagram then implies $\lambda=m$, so $\lambda=m=\pm1$. 
Finally, by the universal coefficient theorem, for any $n\in \mathbb{N}$, $\phi^\ast: \Z/n\cong \H^2(\hatpi(\Sigma);\Z/n)\to \H^2(\hatpi(\Sigma);\Z/n)\cong \Z/n$ is also  a scalar multiplication by $\pm 1$. 
\end{proof}

\begin{lemma}\label{lem: orbifold mapping degree}
    Let $\O_1$ and $\O_2$ be two closed orientable 2-orbifolds with non-positive orbifold Euler characteristics. Suppose $\psi: \hatpiorb(\O_1)\to \hatpiorb(\O_2)$ is a strongly regular isomorphism. Then, $\O_1$ and $\O_2$ are isomorphic; and there exists an isomorphism $\O_1\cong \O_2$ so that under this identification, $\psi^\ast:\H^2(\O_1;\Z/n)\to \H^2(\O_2;\Z/n)$ is a scalar multiplication by $\pm 1$ for any positive integer $n$. 
\end{lemma}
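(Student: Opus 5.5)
Two things have to be shown: first that $\O_1\cong\O_2$, and then that $\psi^\ast$ acts by $\pm1$ on $\H^2(-;\Z/n)$.

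\emph{Isomorphism of the orbifolds.} For closed orientable 2-orbifolds of non-positive Euler characteristic, the orbifold group is a Fuchsian or Euclidean group, residually finite and virtually a surface group. Its profinite completion determines the genus of the underlying surface and the multiset of cone orders. I would argue as in Wilkes' treatment \cite{Wil17}:
\begin{enumerate}[label=(\roman*), leftmargin=*]
\item Torsion in $\hatpiorb(\O_i)$ is conjugate into the closures of the finite cyclic cone-point subgroups. This follows from goodness, \autoref{prop: goodness}, together with the fact that a virtually free profinite group acts on a profinite tree, so finite subgroups fix vertices. This recovers the conjugacy classes of maximal finite cyclic subgroups, and hence the cone orders.
\item The genus can be read off from the rank of the abelianization after killing torsion.
\item The Euler characteristic is multiplicative under the $\psi$-corresponding finite-index torsion-free subgroups, i.e.\ surface subgroups. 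Their genera are detected by abelianization rank, so the Euler characteristics agree.
\end{enumerate}
Together these give $\O_1\cong\O_2$, and I fix such an identification $\O_1=\O_2=\O$. Strong regularity is not needed for this part.

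\emph{The sign on $\H^2$.} Choose a characteristic finite-index surface subgroup $\pi_1(\Sigma)\le\piorb(\O)$; for instance, intersect all subgroups of a given index that are torsion-free. Then $\psi$ preserves $\widehat{\pi_1(\Sigma)}$ and restricts to an automorphism $\phi$ of $\hatpi(\Sigma)$. Since $\psi$ is strongly regular, $\phi$ is regular, so \autoref{lem: surface mapping degree} shows that $\phi^\ast$ is multiplication by $\epsilon=\pm1$ on $\H^2(\hatpi(\Sigma);\Z/n)$. Naturality gives
\[
\mathrm{res}\circ\psi^\ast=\phi^\ast\circ\mathrm{res}.
\]

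To transfer the sign back to $\O$, I would use \autoref{lem: non-trivial image} together with goodness to understand the restriction map
\[
\mathrm{res}\colon \H^2(\hatpiorb(\O);\Z/n)\to \H^2(\hatpi(\Sigma);\Z/n).
\]
Write $H^2(\piorb(\O);\Z)\cong\Z\oplus(\text{torsion})$; the torsion comes from the cone points. This group restricts with non-trivial image to $H^2(\Sigma;\Z)\cong\Z$, and the free part maps to $d\Z$ for some $d\neq0$. On the free part, $\mathrm{res}\circ\psi^\ast=\epsilon\,\mathrm{res}$ then forces $\psi^\ast=\epsilon$ modulo the kernel of restriction.

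\emph{Main obstacle: the kernel of restriction.} That kernel contains torsion coming from the cone points and $n$-torsion phenomena, so one must still check that $\psi^\ast$ acts by the same sign there. I would handle this by composing the chosen identification $\O_1\cong\O_2$ with an orbifold automorphism. Concretely, I would compare the restrictions of $\psi$ to each cone-point subgroup $\widehat{\Z/p_j}$, on which $\psi$ acts by a unit $u_j$. Global regularity, applied to the torsion part of the abelianization, should force $u_j\equiv\pm1$ in a way compatible with $\epsilon$, possibly after permuting cone points of equal order by a homeomorphism. With that, $\psi^\ast$ is $\pm1$ on the cone-point summands $H^2(\Z/p_j)$ as well, which finishes the argument.
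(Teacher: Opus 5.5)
There is a genuine gap, and it sits exactly at the step you flag as the main obstacle.

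\textbf{The proposed fix does not work.} Your plan is to obtain the sign $\epsilon$ on the image of restriction to $\Sigma$, and then treat the cone points separately, arguing that ``global regularity, applied to the torsion part of the abelianization'' forces the cone exponents $u_j\equiv\pm1$. Regularity gives no information there. The torsion subgroup of $\piorb(\O)^{\mathrm{ab}}$ is finite, hence equal to its own completion. So every automorphism of it is the completion of an abstract one, and regularity constrains nothing in that block. In the case the paper actually needs (the base orbifold $S^2(p_1,p_2,p_3)$ of $M_2(L)$), the whole abelianization is finite, so regularity of $\psi$ itself is vacuous. This is precisely why strong regularity and the passage to $\Sigma$ are needed. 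Moreover, even granting $u_j\equiv\pm1$ for each $j$, you would still need all these signs to agree with each other and with $\epsilon$. You also have not explained how the action on the cone-point subgroups and on $\hatpi(\Sigma)$ together determine $\psi^\ast$ on all of $\H^2(\hatpiorb(\O);\Z/n)$. The ``cone-point summands $H^2(\Z/p_j)$'' are not a $\psi^\ast$-invariant decomposition you have established.

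\textbf{The missing uniformity statement.} The paper takes this from \cite[Proposition 5.7]{Wil17}. For any isomorphism $\psi$ (no regularity needed), there is an identification $\O_1\cong\O_2\cong\O$ and a single unit $\kappa\in\Zx$ such that $\psi^\ast$ is multiplication by $\kappa$ on $\H^2(\O;\Z/n)$ for every $n$. This does two things at once:
\begin{itemize}
\item it supplies the $\psi$-compatible identification, which your parts (i)--(iii) do not, since they only show $\O_1\cong\O_2$ abstractly;
\item it removes any need to analyse the kernel of restriction.
\end{itemize}

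\textbf{How the argument finishes.} By \autoref{lem: non-trivial image}, pick $[\omega]$ with $i^\ast[\omega]=l[\alpha]$ and $l>0$. For any $m$, put $n=ml$ and compare $\widehat{i}^\ast\psi^\ast[\omega]_n=\kappa l[\alpha]_n$ with $\phi^\ast\widehat{i}^\ast[\omega]_n=\pm l[\alpha]_n$, the latter coming from \autoref{lem: surface mapping degree}. This gives $\kappa\equiv\pm1\pmod m$ for every $m$, which forces $\kappa=\pm1$ in $\Zx$.

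A minor point: in (i), $\hatpiorb(\O_i)$ is not virtually free but virtually a closed surface group, so the tree argument as stated does not apply. In any case, this part can simply be cited from \cite{Wil17}.
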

\begin{sloppypar}
\begin{proof}
    First of all, given any isomorphism $\psi: \hatpiorb(\O_1)\to \hatpiorb(\O_2)$, \cite[Proposition 5.7]{Wil17} shows that $\O_1$ and $\O_2$ are isomorphic, and there exists a unit $\kappa\in \Zx$ and an identification $\O_1\cong \O_2\cong \O$ such that for any positive integer $n$, $\psi^{\ast}$ acting on $\H^2(\O;\Z/n)$ is a scalar multiplication by $\kappa$. Thus, it suffices to show that $\kappa=\pm 1$. 

    Suppose, by contrary, that $\kappa\neq \pm 1$. Then, there exists some positive integer $m$ such that $\kappa \not\equiv\pm1\pmod{m}$. Let $\Sigma\to \O$ be a finite-sheeted characteristic cover so that $\Sigma$ is a surface; for instance, we can first find a finite surface cover $\Sigma_0\to \O$, and then let $\pi_1(\Sigma)$ be the intersection of all the subgroups in $\piorb(O)$ with index no more than $[\Sigma_0:\O]$. With this construction, $\psi(\overline{\pi_1(\Sigma)})=\overline{\pi_1(\Sigma)}$, and we let $\phi: \hatpi(\Sigma)\to \hatpi(\Sigma)$ be the restriction of $\psi$. By assumption, $\phi$ is regular.     

    Let $[\alpha]\in H^2(\pi_1(\Sigma);\Z)\cong \Z$ be a generator, and let $i:\pi_1(\Sigma)\to \piorb(\O)$ be the inclusion map. According to \autoref{lem: non-trivial image}, the homomorphism $i^\ast : H^2(\piorb(\O);\Z)\to H^2(\pi_1(\Sigma);\Z)$ has non-trivial image, so  we assume that for some $[\omega]\in H^2(\piorb(\O);\Z)$,  $i^\ast[\omega]=l[\alpha]$ for some positive integer $l$. Let $n=ml$, and let $[\alpha]_n$ be the image of $[\alpha]$ in $H^2(\pi_1(\Sigma);\Z/n)\cong \Z/n$, which is again a generator. Similarly, let $[\omega]_n$ be the image of $[\omega]$ in $H^2(\piorb(\O);\Z/n)$. Then,  $i^\ast([\omega]_n)=l[\alpha]_n$.  

    On the cohomology level, by \autoref{prop: goodness}, we have the following commutative diagram:
    \begin{equation*}
        \begin{tikzcd}
            H^2(\pi_1(\Sigma);\Z/n)\cong \H^2(\hatpi(\Sigma);\Z/n ) & \H^2(\hatpiorb(\O);\Z/n)\cong H^2(\piorb(\O);\Z/n) \arrow[l,"\widehat{i}^\ast"']\\
            H^2(\pi_1(\Sigma);\Z/n)\cong \H^2(\hatpi(\Sigma);\Z/n ) \arrow[u,"\phi^\ast"]& \H^2(\hatpiorb(\O);\Z/n)\cong H^2(\piorb(\O);\Z/n) \arrow[l,"\widehat{i}^\ast"'] \arrow[u,"\psi^\ast"]
        \end{tikzcd}
    \end{equation*}
    Then, $\widehat{i}^\ast \circ \psi^\ast$ sends $[\omega]_n\in H^2(\piorb(\O);\Z/n) \cong \H^2(\hatpiorb(\O);\Z/n) $ to $\widehat{i}^\ast(\kappa [\omega]_n)=\kappa l [\alpha]_n\in H^2(\pi_1(\Sigma);\Z/n)\cong \H^2(\hatpi(\Sigma);\Z/n )$. On the other hand, $\phi$ is regular, so according to \autoref{lem: surface mapping degree}, $\phi^\ast$ is a scalar multiplication by $\pm1$. Thus, $\phi^\ast\circ \widehat{i}^\ast$ sends $[\omega]_n$ to $\phi^\ast(l[\alpha]_n)=\pm l[\alpha]_n$. Recall that $[\alpha]_n$ is a generator of $H^2(\pi_1(\Sigma);\Z/n)\cong \Z/n$, so we have $\kappa l\equiv \pm l \pmod{n}$. Since $n=ml$, we derive that $\kappa\equiv \pm 1\pmod{m}$, which contradicts with our assumption. 
\end{proof}
\end{sloppypar}

\begin{proof}[Proof of \autoref{prop: Seifert strongly regular}]
    According to \cite{Wil17}, we only need to consider the case when $M$ and $N$ admit $\mathbb{H}^2\times \mathbb{E}^1$ geometry. In this case, $M$ and $N$ admit unique Seifert fibrations, and we denote their base orbifolds as $\O_M$ and $\O_N$. By \cite{Wil17}, $\O_M$ and $\O_N$ have the same orientability. When $\O_M$ and $\O_N$ are both non-orientable, let $\widetilde{M}$ and $\widetilde{N}$ be the two-fold covers of $M$ and $N$ which pull back the orientable two-fold covers of $\O_M$ and $\O_N$. Then, $M\cong N$ if and only if $\widetilde{M}\cong \widetilde{N}$; and $\pi_1(\widetilde{M})$ and $\pi_1(\widetilde{N})$ are $\Phi$-corresponding finite-index subgroups of $\pi_1(M)$ and $\pi_1(N)$ since their closures are the centralizers of the unique maximal virtually central normal procyclic subgroup in $\hatpi(M)$ and $\hatpi(N)$ (see \cite[Theorem 5.2]{Wil17}), so $\Phi|:\hatpi(\widetilde{M})\to \hatpi(\widetilde{N})$ is also strongly regular. Therefore, we are justified to assume that both $\O_M$ and $\O_N$ are orientable. 

    In this case, by \cite[Theorem 5.2]{Wil17}, there is a commutative diagram of isomorphisms between the central extensions:
    \begin{equation*}
            \begin{tikzcd}
        1 \arrow[r] & \widehat{\Z} \arrow[r] \arrow[d,"\phi"',"\cong"] & \hatpi(M) \arrow[r] \arrow[d,"\Phi"',"\cong"] & \hatpiorb(\O_M) \arrow[r]  \arrow[d,"\psi"',"\cong"] & 1\\
         1 \arrow[r] & \widehat{\Z} \arrow[r] & \hatpi(N) \arrow[r] & \hatpiorb(\O_N) \arrow[r] & 1
    \end{tikzcd}
    \end{equation*}
    On one hand, since $M$ and $N$ admit $\mathbb{H}^2\times \mathbb{E}^1$ geometry and their base orbifolds are orientable, their regular fibers represent non-torsion homology classes in $H_1(M;\Z)$ and $H_1(N;\Z)$. Since $\Phi$ is regular, the isomorphism $\phi: \widehat{\Z}\to \widehat{\Z}$ between the closures of the fiber subgroups must be a scalar multiplication by $\pm 1$, i.e. $\phi$ sends the regular fiber of $M$ to a $\pm 1$-multiple of the regular fiber of $N$. On the other hand, according to \autoref{lem:strong regular criteria}, the assumption that $\Phi $ is strongly regular implies that $\psi: \hatpiorb(\O_M)\to \hatpiorb(\O_N)$ is also strongly regular. Then, \autoref{lem: orbifold mapping degree} implies that under an identification $\O_M\cong \O_N\cong \O$, $\psi^\ast$ acting on $\H^2(\hatpiorb(\O);\Z/n)$ is a scalar multiplication by $\pm 1$ for any positive integer $n$. Finally, by Proposition 5.7 and Theorem 5.1 of \cite{Wil17}, $M$ and $N$ must be homeomorphic as they form a Hempel pair with scale factor $\pm1$.  
\end{proof}
\subsection{The hyperbolic case}
\begin{theorem}\label{thm: Montesinos 1}
    Let $L$ be a hyperbolic Montesinos link with 3 rational tangles. Then, $\pi_1(\X L)$ is profinitely rigid in $\mathscr{M}$. 
\end{theorem}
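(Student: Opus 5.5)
Let $L=\mathbf{M}(\frac{q_1}{p_1},\frac{q_2}{p_2},\frac{q_3}{p_3})$ be hyperbolic, and suppose $N$ is a compact orientable 3-manifold with $\hatpi(N)\cong\hatpi(\X L)$. I will follow the template of \autoref{thm: two bridge}, replacing Burde's linking-number argument by the strong regularity machinery of the preceding subsection.

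First, by \autoref{thm: hyperbolic link} there is an oriented hyperbolic link $L'\subseteq S^3$ with $N\cong \X{L'}$, together with a perfect isomorphism $\Phi:\hatpi(\X L)\to\hatpi(\X{L'})$ (after orienting $L$ arbitrarily). Then \autoref{prop: branched cover} gives an isomorphism $\phi:\hatpi(M_2(L))\to\hatpi(M_2(L'))$, compatible with the restriction $\check\Phi$ on the balanced two-fold covers. By \autoref{prop: Montesinos two fold cover}, $M_2(L)$ is the Seifert fibered space $(0,0;\frac{q_1}{p_1},\frac{q_2}{p_2},\frac{q_3}{p_3})$ with three exceptional fibers. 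It is not a lens space: if it were, $L$ would be two-bridge by \autoref{thm: involution lens space}, but it has three genuine tangles. By \autoref{thm: Wilton-Zalesskii} and the profinite rigidity of Seifert structures \cite{Wil17}, $M_2(L')$ is a closed Seifert fibered space with the same geometry and base orbifold, so it also fibers over $S^2$ with three exceptional fibers (for finite $\pi_1$ this holds because the finite group itself is detected).

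The key step is to upgrade $\phi$ to a homeomorphism $M_2(L)\cong M_2(L')$ via \autoref{prop: Seifert strongly regular}, which requires showing that $\phi$ is strongly regular. I would argue as follows. The isomorphism $\check\Phi:\hatpi(C_2(L))\to\hatpi(C_2(L'))$ is a restriction of the perfect isomorphism $\Phi$. Any pair of $\check\Phi$-corresponding finite-index subgroups gives corresponding finite covers of $C_2(L)$ and $C_2(L')$, which are finite covers of the link exteriors. The restricted isomorphism on such a cover is peripheral regular with orientation-preserving boundary map, since it lifts $h_\Phi$ as in the argument for diagram (\ref{cov boundary}). I would then show that these restrictions are regular, meaning the map on $\hatH$ is the completion of an integral isomorphism. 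The idea is to use that $\hatH$ of a cusped hyperbolic cover is generated up to finite index by boundary classes plus "Thurston norm / fibered" data. I expect this to be the main obstacle. The alternative I would try first is to cite the result in \cite{Xu25} that isomorphisms between profinite completions of cusped hyperbolic 3-manifold groups with profinite degree $\pm1$ are strongly regular (via the $\widehat{\Z}$-coefficient homology and Poincaré duality arguments there), taking profinite degree $\pm1$ from \autoref{thm: peripheral regular}. Granting that $\check\Phi$ is strongly regular, \autoref{lem:strong regular criteria} applied to the surjections $\pi_1(C_2(L))\to\pi_1(M_2(L))$ and $\pi_1(C_2(L'))\to\pi_1(M_2(L'))$ shows that $\phi$ is strongly regular. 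When $\pi_1(M_2(L))$ is infinite, \autoref{prop: Seifert strongly regular} then gives $M_2(L)\cong M_2(L')$. When it is finite, I would handle the spherical case separately. There $M_2(L)$ is a prism or polyhedral manifold, determined by its fundamental group up to a finite ambiguity, and I would resolve that ambiguity by passing to the universal cover with lifted link components and comparing linking numbers, as in the two-bridge proof.

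Finally, \autoref{prop: two fold Seifert} shows that $L'$ is a Seifert link or a Montesinos link. It is not a Seifert link because it is hyperbolic, so it is Montesinos. Since $M_2(L')\cong M_2(L)$, \autoref{prop: Montesinos characterize} shows that $L'$ is isotopic to $L$ or to its mirror image. In either case $N\cong\X{L'}\cong\X L$, which proves that $\pi_1(\X L)$ is profinitely rigid in $\mathscr{M}$.
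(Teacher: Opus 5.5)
Your main line is the paper's proof: the perfect isomorphism from \autoref{thm: hyperbolic link}, the diagram of \autoref{prop: branched cover}, strong regularity of $\check\Phi$, transfer to $\phi$ via \autoref{lem:strong regular criteria}, and then \autoref{prop: Seifert strongly regular}, \autoref{prop: two fold Seifert} and \autoref{prop: Montesinos characterize}. For the step you flag as the main obstacle, your fallback is what the paper does. It applies \cite[Theorem 1.4]{Xu25} directly to the cusped hyperbolic manifolds $C_2(L)$ and $C_2(L')$ to get that $\check\Phi$ is strongly regular, and it needs no separate argument.

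The one step that does not work as written is your separate treatment of finite $\pi_1(M_2(L))$. There is no Burde-type classification for prism or polyhedral manifolds with lifted branch sets. So ``comparing linking numbers in the universal cover'' is not something you can complete with the available tools, and as proposed that case is left unproved.

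The case split is also unnecessary. \autoref{prop: Seifert strongly regular} excludes only lens spaces, not finite fundamental groups; its proof reduces every geometry other than $\mathbb{H}^2\times\mathbb{E}^1$ to \cite{Wil17}. Concretely, if $\pi_1(M_2(L))$ is finite, then $\pi_1(M_2(L'))\cong\hatpi(M_2(L'))\cong\pi_1(M_2(L))$ by residual finiteness. This group surjects onto the non-cyclic orbifold group of the base $S^2$ with three cone points, so neither manifold is a lens space. Closed spherical manifolds other than lens spaces are determined up to homeomorphism by their fundamental groups, which finishes that case. Delete the case distinction and apply the proposition uniformly, and your argument coincides with the paper's.
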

\begin{proof} 
    Suppose $N$ is a compact orientable 3-manifold with $\hatpi(N)\cong \hatpi(\X L)$. Equip $L$ with an arbitrary orientation.  By \autoref{thm: hyperbolic link}, there is an oriented hyperbolic link $L'$ such that $N\cong \X{L'}$, and there is a perfect isomorphism $\Phi:\hatpi(\X L)\to \hatpi(\X{L'})$.  Thus, by \autoref{prop: branched cover}, there is an isomorphism $\phi:\hatpi(M_2(L))\to \hatpi(M_2(L'))$ that fits into the commutative diagram:
    \begin{equation*}
        \begin{tikzcd}
            \hatpi(C_2(L)) \arrow[r,"\check{\Phi}"] \arrow[d,two heads,"\widehat{\mathrm{incl}_\ast}"] & \hatpi(C_2(L'))   \arrow[d,two heads,"\widehat{\mathrm{incl}_\ast}"]  \\
            \hatpi(M_2(L)) \arrow[r,"\phi"] & \hatpi(M_2(L'))
        \end{tikzcd}
    \end{equation*}
    According to \autoref{prop: Montesinos two fold cover} the two-fold branched cover $M_2(L)$ is a Seifert fibered space whose orbit space is a $2$-sphere with 3 cone points; and in particular, $M_2(L)$ is not a lens space. By \autoref{thm: Wilton-Zalesskii}, $M_2(L')$ is also a Seifert fibered space. In addition, $C_2(L)$ and $C_2(L')$ are cusped hyperbolic 3-manifolds, so $\check{\Phi}$ is strongly regular by \cite[Theorem 1.4]{Xu25}. As a consequence, $\phi$ is also  strongly regular by \autoref{lem:strong regular criteria}. Therefore, according to \autoref{prop: Seifert strongly regular}, $M_2(L)\cong M_2(L')$.

    Consequently, \autoref{prop: two fold Seifert} implies that  $L'$ is either a Seifert link or a Montesinos link. By assumption, $L'$ is a hyperbolic link, so it must be a Montesinos link. Finally, \autoref{prop: Montesinos characterize} implies that $L'$ is isotopic or mirror image to $L$ (as unoriented links), since   $L$ has 3 tangles. Therefore, $N\cong \X{L'}\cong \X{L}$. 
\end{proof}

\subsection{The exceptional cases}
\begin{lemma}\label{lem: gluing}
    Let $M_1$ and $M_2$ be compact orientable Seifert fibered spaces each containing at least two boundary components. Suppose that both $M_1$ and $M_2$ are achiral.  For $i=1,2$, let $T_i$ be a boundary component of $M_i$, and let $\gamma_i$ denote  a regular fiber of $M_i$ on $T_i$. Suppose $f:T_1\to T_2$ and $f':T_1\to T_2$ are two homeomorphisms such that $\Delta(f(\gamma_1),\gamma_2)=\Delta(f'(\gamma_1),\gamma_2)=n$ and $n\in \{1,2,3,4,6\}$, where $\Delta(\cdot,\cdot)$ denotes the geometric intersection number on the torus $T_2$. Then, $M_1\cup_{f}M_2$ is homeomorphic to $M_1\cup_{f'}M_2$. 
\end{lemma}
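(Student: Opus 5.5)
Fix bases on the tori and compare the two gluings. On $T_1$ choose a basis $(\gamma_1,\delta_1)$ where $\delta_1$ is a curve dual to the fiber $\gamma_1$. On $T_2$ choose a basis $(\gamma_2,\delta_2)$ in the same way. With respect to these bases, $f_\ast$ and $f'_\ast$ are matrices in $\GL_2(\Z)$. The condition $\Delta(f(\gamma_1),\gamma_2)=n$ means that the first column of each matrix is $(a,\pm n)^T$ for some integer $a$ coprime to $n$. The homeomorphism type of $M_1\cup_f M_2$ does not change when $f$ is replaced by $g_2\circ f\circ g_1$, where each $g_i$ is the restriction to $T_i$ of a self-homeomorphism of $M_i$. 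So the plan is to show that fiber-preserving self-homeomorphisms of $M_1$ and $M_2$, restricted to $T_1$ and $T_2$, act transitively enough to carry $f$ to $f'$.

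Three kinds of moves are available.

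First, Dehn twists along vertical annuli. Each $M_i$ has at least two boundary components, so there is a vertical annulus joining $T_i$ to another boundary component. Twisting along it gives a self-homeomorphism of $M_i$ that acts on $T_i$ by $\delta_i\mapsto \delta_i+k\gamma_i$ and fixes $\gamma_i$, for any $k\in\Z$. On the other boundary component it acts by the compensating twist, which is harmless because that component is not glued. Equivalently, this reflects the freedom in choosing the section $\delta_i$ when $M_i$ has more than one boundary component.

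Second, achirality. Since $M_i$ is achiral, it has an orientation-reversing self-homeomorphism. I would arrange, composing with twists and using that a Seifert fibration with boundary has an essentially unique fibration, that this map preserves the fibration and restricts to $T_i$ as $\gamma_i\mapsto\gamma_i$, $\delta_i\mapsto-\delta_i$, or as $\gamma_i\mapsto-\gamma_i$. This step needs care: the map may permute the boundary components, and we need one that takes $T_i$ to itself. If it does not, I would compose with a symmetry, or instead use the fiber-reversing involution that comes from reversing the orientation of both base and fiber.

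Third, the hyperelliptic-type involution $-I$ on $T_i$, realized by reversing both the fiber and the base orientation. This map is orientation-preserving.

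With the twists on both sides, $f_\ast=\begin{pmatrix}a&b\\ \epsilon n&d\end{pmatrix}$ can be changed by left multiplication by $\begin{pmatrix}1&k\\0&1\end{pmatrix}$ (twisting in $M_2$) and right multiplication by $\begin{pmatrix}1&l\\0&1\end{pmatrix}$ (twisting in $M_1$). Left multiplication sends $a\mapsto a+k\epsilon n$, so $a$ can be normalized modulo $n$. Once $a$ is fixed, the remaining entries are determined up to the right-twist ambiguity: given the first column, the second column is determined up to adding multiples of the first column and up to sign from the determinant. Applying the sign changes from the achiral maps, which flip $\epsilon$ and the determinant, the invariant left is the class of $a$ in $(\Z/n)^\times$, modulo the action of $\pm1$ generated by the reflections and by $-I$. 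For $n\in\{1,2,3,4,6\}$ we have $(\Z/n)^\times=\{\pm1\}$, so this invariant is trivial. Therefore $f$ and $f'$ lie in the same double coset and the glued manifolds are homeomorphic.

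The main obstacle is the second step: producing, from bare achirality of $M_i$, a reflection that restricts to $T_i$ in the required normal form, that is, one that maps $T_i$ to itself, preserves the fiber up to sign, and reverses $\delta_i$ compatibly. I would argue via the uniqueness of Seifert fibrations for these pieces, so that any homeomorphism can be isotoped to be fiber-preserving. Then I would use the classification of Seifert invariants: achirality forces the invariants to be symmetric under negation, which lets me build an explicit orientation-reversing fiber-preserving map fixing each boundary component setwise. I would then check how the resulting sign choices combine to realize multiplication of $a$ by $-1$ modulo $n$.
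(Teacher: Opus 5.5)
Your proposal is correct and follows essentially the paper's proof. The paper also shows that the image of $\mathrm{Homeo}(M_i,T_i)$ in $\GL_2(\Z)$ contains the group $U$ of upper-triangular matrices with diagonal entries $\pm1$, generated by vertical-annulus twists, $-I$, and a determinant $-1$ element coming from achirality. It then uses $(\Z/n)^\times=\{\pm1\}$ to normalize both gluing matrices to $\left(\begin{smallmatrix}1&0\\ n&1\end{smallmatrix}\right)$.

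The step you flag as the main obstacle, obtaining an orientation-reversing homeomorphism that maps $T_i$ to itself, is asserted without comment in the paper, and building it from the negation-symmetric Seifert invariants is a sound way to justify it. Drop your fallback of using the base-and-fiber reversal at that step, though: that map is orientation-preserving, so it only supplies $-I$, not the needed determinant $-1$ element.
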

\begin{sloppypar}

\begin{proof}
    The conclusion is trivial when one of $M_1$ or $M_2$ is homeomorphic to $T^2\times I$. Thus, we may assume that neither of them is  homeomorphic to $T^2\times I$, and so both of them admit a unique Seifert fibration up to isotopy. 
    
    For $i=1,2$, fix a free $\Z$-basis $\{h_i,e_i\}$ of $H_1(T_i)\cong \Z^2$ such that $h_i$ represents the homology class of $\gamma_i$ (up to a choice of orientation).   The homeomorphisms $f:T_1\to T_2$ and $f':T_1\to T_2$, on the level of homology, can be described by matrices
    $$
    f_\ast\begin{pmatrix}
        h_1 & e_1
    \end{pmatrix}=\begin{pmatrix}
        h_2  & e_2
    \end{pmatrix}\begin{pmatrix}
        a & b\\ c & d
    \end{pmatrix} \quad \text{and}\quad  f_\ast'\begin{pmatrix}
        h_1 & e_1
    \end{pmatrix}=\begin{pmatrix}
        h_2  & e_2
    \end{pmatrix}\begin{pmatrix}
        a' & b'\\ c' & d'
    \end{pmatrix}, 
    $$
    where  $\left(\begin{smallmatrix}
        a & b \\ c & d
    \end{smallmatrix}\right),\left(\begin{smallmatrix}
        a' & b' \\ c' & d'
    \end{smallmatrix}\right)\in\GL_2(\Z)$. 
    Then, $\Delta(f(\gamma_1),\gamma_2)=|c|$ and $\Delta(f'(\gamma_1),\gamma_2)=|c'|$, so $|c|=|c'|=n\in \{1,2,3,4,6\}$.

    We can also identify $\Aut(H_1(T_i))$ with $\GL_2(\Z)$, so that a matrix $P\in \GL_2(\Z)$ acts on $H_1(T_i)$ by 
    $
    \begin{pmatrix}
        h_i & e_i
    \end{pmatrix} \mapsto  \begin{pmatrix}
        h_i & e_i
    \end{pmatrix} P
    $. 
    There is a homomorphism $\rho_i:\mathrm{Homeo}(M_i,T_i)\to \Aut(H_1(T_i))\cong \GL_2(\Z)$ defined in the apparent sense, and we claim that $$
    \mathrm{im}(\rho_i)=U:=\left\{\left.\begin{pmatrix}
        x & z \\ 0 & y 
    \end{pmatrix}\right |\; x,y\in \{\pm1\},\,z\in \Z\right\}.
    $$
    
    On one hand, since $M_i$ admits a unique Seifert fibration up to isotopy,  $\mathrm{Homeo}(M_i)$  preserves the Seifert fibration up to isotopy. Thus, any automorphism in $\mathrm{im}(\rho_i)$ preserves the homology class $h_i$ up to $\pm$-sign. Consequently, $\mathrm{im}(\rho_i)\subseteq U$. On the other hand, \cite[Lemma 8.21]{Xu25A} implies that $\mathrm{im}(\rho)$ contains the subgroup $\left\{\left( \begin{smallmatrix}
        1 & \ast \\ 0  & 1
    \end{smallmatrix}\right )\right\}$, and \cite[Lemma 8.23]{Xu25A} implies that $\mathrm{im}(\rho)$ contains the element $ \left( \begin{smallmatrix}
        -1 & 0 \\ 0  & -1
    \end{smallmatrix}\right ) $. Thus, $\mathrm{im}(\rho)\supseteq U\cap \SL_2(\Z)$. Moreover, $M_i$ is achiral, so $\mathrm{Homeo}(M_i,T_i)$ contains an orientation-reversing element. Thus, $\mathrm{im}(\rho)$ further contains an element with determinant $-1$, and so $\mathrm{im}(\rho)\supseteq U$.

     Note that $M_1\cup_{f}M_2\cong M_1\cup_{f'}M_2$ if there exist $g_1\in \mathrm{Homeo}(M_1,T_1)$ and $g_2\in \mathrm{Homeo}(M_2,T_2)$ such that $f'= g_2|_{T_2}\circ f \circ g_1|_{T_1}$.  
    Since homeomorphisms between tori are uniquely determined up to isotopy by their actions on the first homology, $M_1\cup_f M_2\cong M_1\cup_{f'}M_2$ if there exists $P_1\in U = \mathrm{im}(\rho_1)$ and $P_2\in U=\mathrm{im}(\rho_2)$ such that 
    $$
        \begin{pmatrix}
        a & b\\ c & d
    \end{pmatrix}= P_2 \begin{pmatrix}
        a' & b'\\ c' & d'
    \end{pmatrix} P_1.    $$

   In order to show the existence of such $P_1$ and $P_2$, we claim that there exist $Q_1,Q_2\in U$ such that $$Q_2 \begin{pmatrix}
        a & b\\ c & d
    \end{pmatrix} Q_1=\begin{pmatrix}
        1 & 0\\ n & 1
    \end{pmatrix} .$$
In fact, $|ad-bc|=1$, so $\gcd(a,n)=\gcd(a,c)=1$. For $n\in \{1,2,3,4,6\}$, we must have $a\equiv \pm1\pmod n$. Let $x_2\in \{\pm 1\}$ such that $x_2a \equiv 1\pmod n$,  and let  $y_2=\frac{n}{c} \in \{\pm1\}$. Then, $y_2d\equiv y_2(x_2a)d\equiv y_2x_2(ad-bc)\pmod n$. Let $y_1=y_2x_2(ad-bc)\in \{\pm 1\}$, so $y_1y_2d\equiv 1\pmod n$. Finally, let $z_1= \frac{1-y_1y_2d}{n} \in \Z$ and   $z_2=y_2\frac{1-x_2a}{n} \in \Z$. Then, 
\begin{equation*}
    \begin{pmatrix}
        x_2 & z_2\\ 0 & y_2 
    \end{pmatrix}\begin{pmatrix}
        a & b \\ c & d
    \end{pmatrix}\begin{pmatrix}
        1 & z_1 \\ 0 &  y_1
    \end{pmatrix}=\begin{pmatrix}
        x_2a+z_2c & x_2y_1b+z_2y_1d+z_1(x_2a+z_2c)\\
        y_2c& y_2z_1c+y_2y_1d
    \end{pmatrix}= \begin{pmatrix}
        1 & 0 \\ n & 1 
    \end{pmatrix},
\end{equation*}
where $Q_1=\left(\begin{smallmatrix}
    1 & z_1 \\ 0 & y_1
\end{smallmatrix}\right)\in U$ and $Q_2=\left(\begin{smallmatrix}
    x_2 & z_2 \\ 0 & y_2
\end{smallmatrix}\right)\in U$.  Likewise, there exists $Q_1',Q_2'\in U$ such that $Q_2'\left(\begin{smallmatrix}
    a' & b' \\ c' & d' 
\end{smallmatrix}\right) Q_1'=  \left(\begin{smallmatrix}
    1& 0 \\ n & 1 
\end{smallmatrix}\right)$. Let $P_1=Q_1'Q_1^{-1}\in U$ and $P_2=Q_2^{-1}Q_2'\in U$. Then, $\left(\begin{smallmatrix}
    a & b \\ c & d 
\end{smallmatrix}\right)=P_2\left(\begin{smallmatrix}
    a' & b' \\ c' & d' 
\end{smallmatrix}\right)P_1$, finishing the proof.  
\end{proof}

\end{sloppypar}

\begin{theorem}\label{thm: Montesinos 2}
    Let $L$ be a non-hyperbolic Montesinos link. Then, $\pi_1(\X L)$ is profinitely rigid in $\mathscr{M}$. 
\end{theorem}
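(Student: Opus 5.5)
By \autoref{prop: Montesinos geometrization classification}, a non-hyperbolic Montesinos link $L$ is either a Seifert link (case \ref{M.classification.2}) or, up to mirror image, one of the four exceptional links in case \ref{M.classification.3}, whose exteriors are graph manifolds. I would treat these two cases separately.

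\textbf{Seifert case.} Here $\X L$ is a Seifert fibered space with non-empty toral boundary. I would invoke \cite[Corollary 8.3 and Remark 8.4]{Xu25A}, exactly as in the proof of \autoref{thm: two bridge}. That result gives profinite rigidity in $\mathscr{M}$ for Seifert link exteriors (equivalently, for Seifert fibered spaces with boundary arising as link exteriors in $S^3$). So $\pi_1(\X L)$ is profinitely rigid in $\mathscr{M}$, and nothing further is needed.

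\textbf{Graph manifold case.} Let $N$ be compact orientable with $\hatpi(N)\cong\hatpi(\X L)$. By \autoref{thm: Wilton-Zalesskii}, $N$ is a graph manifold with the same JSJ graph. I would first compute the JSJ decompositions of the four exceptional exteriors explicitly, using $M_2(L)$ and known descriptions from \cite{Oer84,BS16}. For instance, $\mathbf{M}(\frac12,\frac12,-\frac12,-\frac12)$ and the $3$-tangle cases should each decompose as the union of two Seifert pieces along a single torus. The expected pieces are simple: a twisted $I$-bundle/cable space, or a composing space such as (pair of pants)$\times S^1$, or a Seifert space over an annulus with one cone point.

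Next I would apply \cite{Wil17} and \cite{Xu25A} (rigidity of Seifert pieces with boundary, as used in their graph-manifold results) to get $N = N_1\cup_{f'} N_2$ with $N_i\cong M_i$. Profinite completion also detects the intersection number $n=\Delta(f(\gamma_1),\gamma_2)$ of fibers across the JSJ torus, because it is encoded by the closures of fiber subgroups inside the completed torus group; this is Wilkes' invariant for graph manifolds.

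For these exceptional links, the relevant $n$ is small. Heuristically it comes from the cone orders $2,3,4,6$ appearing in $\mathbf{M}(\frac23,-\frac13,-\frac13)$ and the other cases, so $n\in\{1,2,3,4,6\}$. I would then verify that each piece $M_i$ is achiral, for example via an orientation-reversing fiber-preserving homeomorphism of a product or cable space, and has at least two boundary components. Under those conditions \autoref{lem: gluing} gives $N\cong M_1\cup_f M_2=\X L$. If some piece has only one boundary component, or the graph is not a single edge, I would adapt the same matrix-normalization argument from \autoref{lem: gluing}.

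\textbf{Main obstacle.} The hard step is the explicit JSJ bookkeeping for the four exceptional links. This means identifying the pieces, checking their achirality and number of boundary components, and confirming that $n\in\{1,2,3,4,6\}$. It also means making sure that Wilkes' profinite invariants pin down $n$ and the pieces exactly, not merely up to the units $\kappa\in\Zx$ that appear in general profinite graph-manifold rigidity. \autoref{lem: gluing} is designed precisely to absorb that residual ambiguity, since for these $n$ every unit mod $n$ is $\pm1$.
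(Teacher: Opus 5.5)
Your strategy is the paper's. You handle the Seifert case via \cite[Corollary 8.3 and Remark 8.4]{Xu25A}. For the four graph-manifold exceptions, you match the JSJ pieces, transfer the fiber intersection number $n$ across the JSJ torus, and glue with \autoref{lem: gluing}. Your index argument for $n$ in the completed torus group is fine; the paper cites \cite[Lemma 9.3 (1)]{Xu25A}. You also correctly see that the lemma exists to absorb the unit ambiguity, since $(\Z/n)^\times=\{\pm1\}$.

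However, the step ``apply \cite{Wil17} and \cite{Xu25A} to get $N_i\cong M_i$'' fails as stated. Seifert fibered spaces with boundary are not determined by their profinite completions, nor even by their fundamental groups. The pieces here are $X=(0,2;\frac12)$ and $Y=\Sigma_{0,3}\times S^1$, and $\pi_1(Y)\cong F_2\times\Z\cong\pi_1(\Sigma_{1,1}\times S^1)$. What \cite[Corollary 8.3]{Xu25A} actually gives is $\pi_1(N_i)\cong\pi_1(M_i)$. That suffices when $M_i\cong X$. But for $\mathbf{M}(\frac12,-\frac14,-\frac14)$ and $\mathbf{M}(\frac12,\frac12,-\frac12,-\frac12)$ it leaves open $N_i\cong\Sigma_{1,1}\times S^1$.

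The missing ingredient is that the isomorphism respects the peripheral structure. Each $M_i$ contains a boundary torus of $\X L$, so each $N_i$ contains a boundary torus of $N$; the paper uses \cite[Lemma 9.7 (1)]{Xu25} for this. Hence $|\partial N_i|\ge 2$, which excludes $\Sigma_{1,1}\times S^1$. Your plan checks ``at least two boundary components'' only for $M_i$; the real content is transferring this to $N_i$.

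Beyond this, the proposal leaves undetermined the concrete data everything rests on. The pieces are:
\begin{itemize}
\item $X,X$ for $\mathbf{M}(\frac23,-\frac13,-\frac13)$, with $n=3$;
\item $X,X$ for $\mathbf{M}(\frac12,-\frac13,-\frac16)$, with $n=1$;
\item $X,Y$ for $\mathbf{M}(\frac12,-\frac14,-\frac14)$, with $n=1$;
\item $Y,Y$ for $\mathbf{M}(\frac12,\frac12,-\frac12,-\frac12)$, with $n=1$.
\end{itemize}
Your ``heuristic from cone orders'' is not a justification: cone order $4$ occurs in one case, yet $n=1$ there.

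Since $X$ and $Y$ are both achiral with at least two boundary tori, your fallback for single-boundary pieces is never needed. It also would not go through by the same normalization: homeomorphisms of a piece with one boundary torus must preserve its rational longitude, so the shears $\left(\begin{smallmatrix}1&\ast\\0&1\end{smallmatrix}\right)$ used in \autoref{lem: gluing} are unavailable.

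Finally, \autoref{thm: Wilton-Zalesskii} assumes empty or toral boundary. You therefore first need \cite[Lemma A.1]{Xu24} to know that $N$ is of that type.
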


\begin{sloppypar}

\begin{proof}
    When $L$ is  a Seifert link, it follows again from   \cite[Corollary 8.3 and Remark 8.4]{Xu25A} that $\pi_1(\X{L})$ is profinitely rigid in $\mathscr{M}$.  Thus, it suffices to consider the four exceptional cases listed in \autoref{prop: Montesinos geometrization classification}~\ref{M.classification.3}.

      In these cases, $\X L$ is a graph manifold with two JSJ-pieces (denoted by $M_1$ and $M_2$) glued along one JSJ-torus, see \autoref{fig: exceptional Montesinos}. Throughout the proof, let $X$ denote the Seifert fibered space $(0,2;\frac{1}{2})$, and let $Y$ denote the Seifert fibered space $(0,3;)=\Sigma_{0,3}\times S^1$.  When $L= \mathbf{M}(\frac{2}{3},-\frac{1}{3},-\frac{1}{3})$ or $L= \mathbf{M}(\frac{1}{2},-\frac{1}{3},-\frac{1}{6})$, both JSJ-pieces of $\X L$ are homeomorphic to $X$; when  $L=\mathbf{M}(\frac{1}{2},-\frac{1}{4},-\frac{1}{4})$, one of the JSJ-pieces of $\X L$ is homeomorphic to $X$, and the other one is homeomorphic to $Y$; when $L=\mathbf{M}(\frac{1}{2},\frac{1}{2},-\frac{1}{2},-\frac{1}{2})$, both JSJ-pieces of $\X L $ are homeomorphic to $Y$.

      \begin{figure}[ht!]
        \centering
        \subfigure[{\scriptsize $\mathbf{M}(\frac{2}{3},-\frac{1}{3},-\frac{1}{3})$}]{\includegraphics[width=2.5cm]{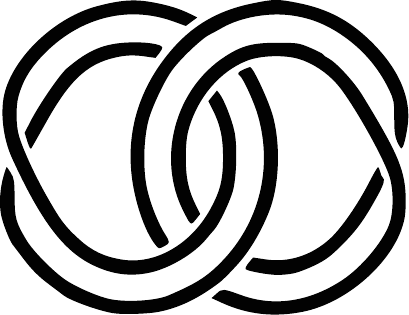}}
        \hspace{1cm}
        \subfigure[{\scriptsize $\mathbf{M}(\frac{1}{2},-\frac{1}{4},-\frac{1}{4})$}]{\includegraphics[width=2.5cm]{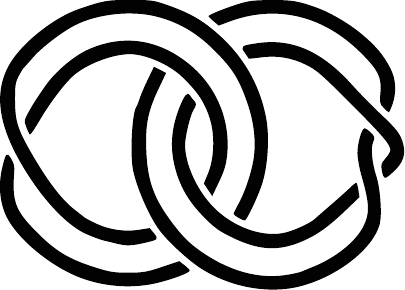}}
        \hspace{1cm}
        \subfigure[{\scriptsize $\mathbf{M}(\frac{1}{2},-\frac{1}{3},-\frac{1}{6})$}]{\includegraphics[width=2.5cm]{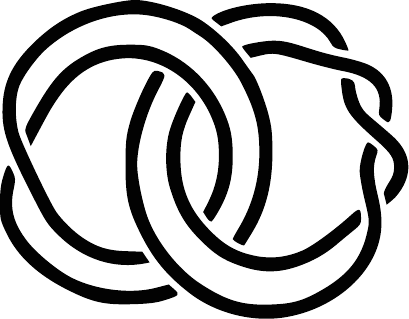}}
        \hspace{0.9cm}
        \subfigure[{\scriptsize $\mathbf{M}(\frac{1}{2},\frac{1}{2},-\frac{1}{2},-\frac{1}{2})$}]{\includegraphics[width=2.9cm]{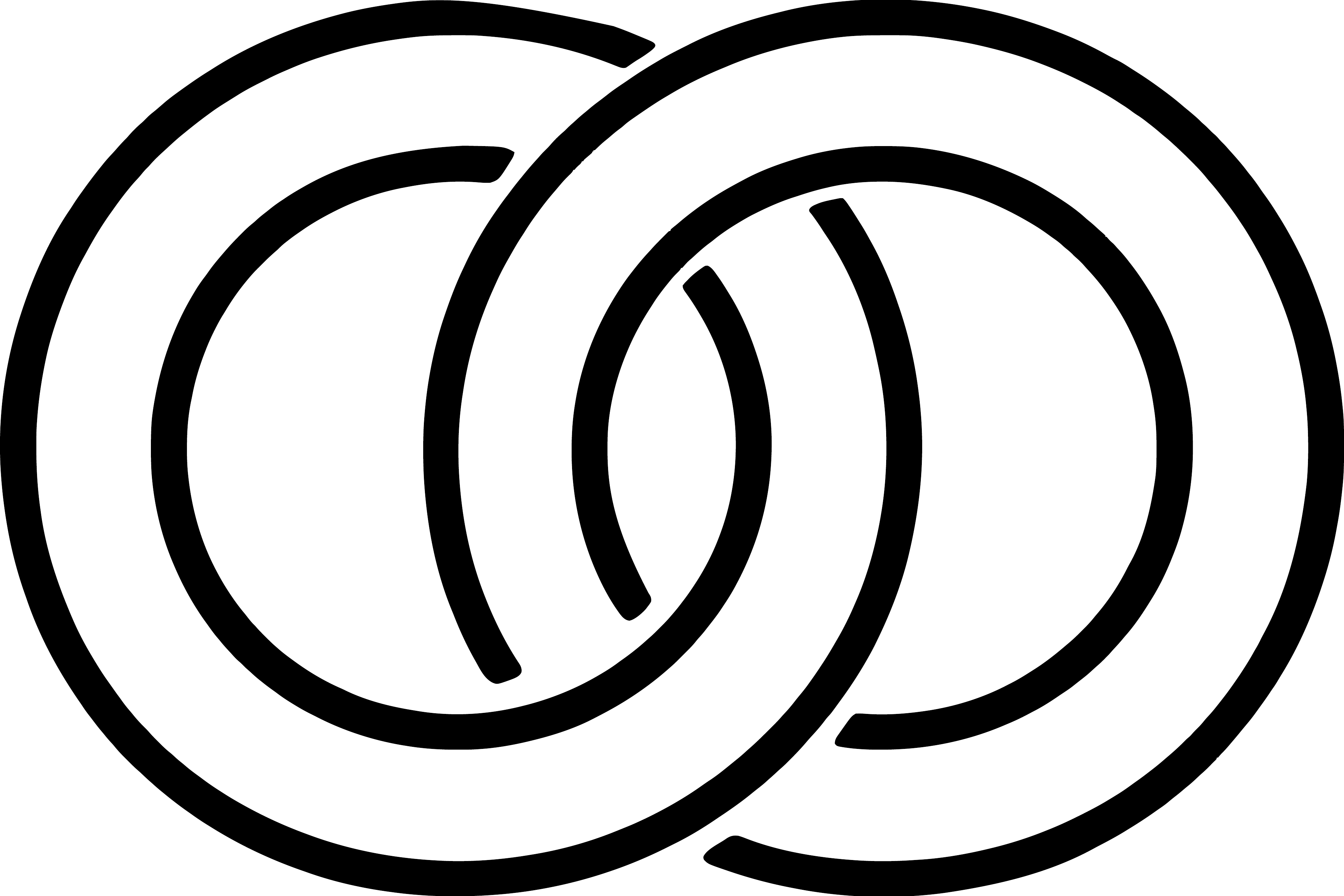}}
        \label{fig: exceptional Montesinos}
        \caption{\cite[Figure 1.4]{Oer84}}
    \end{figure}

    Suppose $N$ is a compact orientable 3-manifold  such that $\hatpi(N)\cong \hatpi(\X L)$. We show that $N$ is homeomorphic to $\X L$. 
    
    \cite[Lemma A.1]{Xu24} and \autoref{thm: Wilton-Zalesskii} imply that $N$ is also a graph manifold with two JSJ-pieces (denoted by $N_1$ and $N_2$) glued along one JSJ-torus, with the property that $\hatpi(M_1)\cong \hatpi(N_1)$ and $\hatpi(M_2)\cong \hatpi(N_2)$. According to \cite[Corollary 8.3]{Xu25A}, this implies that $\pi_1(M_1)\cong \pi_1(N_1)$ and $\pi_1(M_2)\cong \pi_1(N_2)$. Note that any orientable  Seifert fibered space  with fundamental group  isomorphic to $\pi_1(X)$ is homeomorphic to $X$, so $N_i\cong M_i$ if $M_i\cong X$. On the other hand, there are two orientable  Seifert fibered spaces with fundamental group isomorphic to $\pi_1(Y)\cong F_2\times \Z$, namely $Y=\Sigma_{0,3}\times S^1$ and $\Sigma_{1,1}\times S^1$. However, each $M_i$ contains a boundary component of $M$, so according to \cite[Lemma 9.7 (1)]{Xu25}, each $N_i$ also contains a boundary component of $N$. In particular, $\partial N_i$ has at least two components. Therefore, when $M_i\cong Y$, the only remaining possibility is that $N_i \cong Y \cong M_i$. 

    In addition, \cite[Lemma 9.3 (1)]{Xu25A} implies that the geometric intersection number between the regular fibers of $N_1$ and $N_2$ on the JSJ-torus of $N$ is equal to the geometric intersection number between  the regular fibers of $M_1$ and $M_2$ on the JSJ-torus of $S^3-L$, which is $3$ when $L= \mathbf{M}(\frac{2}{3},-\frac{1}{3},-\frac{1}{3})$, and $1$ when $L=\mathbf{M}(\frac{1}{2},-\frac{1}{4},-\frac{1}{4}),\mathbf{M}(\frac{1}{2},-\frac{1}{3},-\frac{1}{6}), \mathbf{M}(\frac{1}{2},\frac{1}{2},-\frac{1}{2},-\frac{1}{2})$. Note that both $X$ and $Y$ are achiral, so \autoref{lem: gluing} implies that $N=N_1\cup_{T^2}N_2$ is homeomorphic to $M_1\cup_{T^2}M_2=\X{L}$.
\end{proof}

\end{sloppypar}

\section{Further criteria for profinite rigidity}\label{motivatingcriteria}

\subsection{Orbifold Dehn surgery}
A more precise version of \autoref{prop: branched cover} can be obtained by profinitely aligning  the so called $\pi$-orbifold groups. 

Let $L=K_1\cups K_n$ be a link in $S^3$. For an integer $r\ge 2$, the {\em $\frac{2\pi}{r}$-orbifold} determined by $L$ is an orbifold with underlying space $S^3$, singular locus $L$, and cone angle $\frac{2\pi}{r}$ on each component $K_i$. We denote the $\frac{2\pi}{r}$-orbifold as $O_r(L)$. Indeed, $O_r(L)$ can be viewed as an orbifold Dehn surgery of $\X{L}$ corresponding to the non-primitive slopes $r\mathbf{m}=(r  m_1,\cdots, r  m_n)$, where the $m_i$'s are the   meridians of $L$ assigned with  arbitrary orientations. Hence, the orbifold fundamental group $\piorb(O_r(L))$ is isomorphic to $\pi_1(\X L)/\langle\!\langle m_1^r,\cdots,m_n^r\rangle\! \rangle$.  

\begin{proposition}\label{prop: orbifold surgery}
     Suppose $L=K_1\cups K_n$ and $L'=K_1'\cups K_n'$ are two component-ordered oriented $n$-component links  in $S^3$,  and  $\Phi: \hatpi(\X{L})\to \hatpi(\X{L'})$  is a perfect isomorphism. Then for each $r\ge 2$, there is an  isomorphism  $\Phi_r:\hatpiorb(O_r(L))\to \hatpiorb(O_r(L'))$   that fits into the following commutative diagram.
    \begin{equation*} 
        \begin{tikzcd}[column sep=large]
            \hatpi(\X{L}) \arrow[d,"\Phi"' ]  \arrow[r, two heads]& \hatpiorb(O_r(L))  \arrow[d,"\Phi_r" ]     \\
            \hatpi(\X{L'})  \arrow[r, two heads] &   \hatpiorb(O_r(L'))
        \end{tikzcd}
    \end{equation*}
\end{proposition}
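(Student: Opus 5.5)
The plan is to compute the profinite completion of a quotient by a normal closure, and show that $\Phi$ carries the relevant closed normal subgroup onto its counterpart. Since $\piorb(O_r(L))\cong \pi_1(\X L)/\langle\!\langle m_1^r,\cdots,m_n^r\rangle\!\rangle$ and $\pi_1(\X L)$ is finitely generated, profinite completion is right exact. Concretely, for a finitely generated group $\Gamma$ and a normal subgroup $R$, the natural map $\widehat{\Gamma}\to\widehat{\Gamma/R}$ is surjective with kernel $\overline{R}$, the closure of the image of $R$ in $\widehat{\Gamma}$. Indeed, finite quotients of $\Gamma/R$ are exactly the finite quotients of $\Gamma$ that kill $R$, so $\widehat{\Gamma/R}\cong \widehat{\Gamma}/\overline{R}$. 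Moreover, $\overline{R}$ equals the closed normal subgroup of $\widehat{\Gamma}$ topologically generated by the images of the normal generators of $R$: their closed normal closure contains the image of $R$, is closed, and is contained in $\overline{R}$.

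It therefore suffices to show that
$$\Phi\big(\overline{\langle\!\langle m_1^r,\cdots,m_n^r\rangle\!\rangle}\big)=\overline{\langle\!\langle m_1'^r,\cdots,m_n'^r\rangle\!\rangle}.$$
Since $\Phi$ is perfect, it is peripheral regular with respect to $h$ with $h(m_i)=m_i'$. By the defining diagram of peripheral regularity, $\Phi(m_i)=g_i^{-1}m_i'g_i$ for some $g_i\in\hatpi(\X{L'})$, where we regard $m_i\in\pi_1(\partial_i)\subseteq\hatpi(\partial_i)$. Hence $\Phi(m_i^r)$ is conjugate to $m_i'^r$. A continuous isomorphism carries the closed normal subgroup generated by a set $S$ to the closed normal subgroup generated by $\Phi(S)$, and conjugating generators does not change a normal closure. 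So $\Phi$ maps the left-hand side into the right-hand side, and applying the same argument to $\Phi^{-1}$ (which is also peripheral regular, with respect to $h^{-1}$) gives equality.

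Finally, $\Phi$ descends to an isomorphism $\Phi_r:\hatpi(\X L)/\overline{R}\to\hatpi(\X{L'})/\overline{R'}$, and composing with the identifications above makes the square commute by construction. The orientation of the meridians is irrelevant, since $m_i^{-r}$ generates the same normal subgroup. The only step needing care is the right-exactness identification $\widehat{\Gamma/R}\cong\widehat{\Gamma}/\overline{R}$, which is standard (cf.\ \cite{RZ10}). This is the same mechanism that underlies \autoref{thm: dehn filling}, with $r m_i$ in place of a primitive slope.
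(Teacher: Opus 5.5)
Your argument is correct and is essentially the paper's own. The paper invokes the orbifold version of the Dehn filling alignment theorem (\autoref{thm: dehn filling}, via \cite[Remark 3.6]{Xu24}), and the content of that theorem is exactly what you write out: the right-exactness identification $\widehat{\Gamma/R}\cong\widehat{\Gamma}/\overline{R}$, together with the fact that peripheral regularity sends each $m_i^r$ to a conjugate of $(m_i')^r$.

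The appeal to $\Phi^{-1}$ is harmless but unnecessary. An isomorphism carries the closed normal closure of $S$ onto the closed normal closure of $\Phi(S)$, and that already equals $\overline{\langle\!\langle (m_1')^r,\cdots,(m_n')^r\rangle\!\rangle}$.
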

\begin{proof}
    The Dehn filling alignment theorem (\autoref{thm: dehn filling}) also holds for non-primitive slopes yielding orbifolds, see \cite[Remark 3.6]{Xu24}; and the conclusion follows directly from this generalized version. 
\end{proof}

The following criterion focuses on the $\pi$-orbifold group of a hyperbolic link. 
Let $\mathscr{O}=\{\piorb(O_2(L))\mid L\text{ is a hyperbolic link in }S^3\}$. 

\begin{theorem}\label{cor:20Dehnfilling}
    Suppose $L$ is a hyperbolic link in $S^3$ which is not a two-bridge link, and $\piorb(O_2(L))$ is profinitely rigid in $\mathscr{O}$. Then, $\pi_1(\X L)$ is profinitely rigid in $\mathscr{M}$. 
    \end{theorem}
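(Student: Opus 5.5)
Let $N$ be a compact orientable 3-manifold with $\hatpi(N)\cong\hatpi(\X L)$. The plan is to transfer the profinite isomorphism to the $\pi$-orbifold groups and then use the orbifold theorem together with rigidity of hyperbolic orbifolds.

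First, I will equip $L$ with an arbitrary orientation. By \autoref{thm: hyperbolic link}, $N\cong\X{L'}$ for some oriented hyperbolic link $L'$ with the same number of components, and there is a perfect isomorphism $\Phi:\hatpi(\X L)\to\hatpi(\X{L'})$. Applying \autoref{prop: orbifold surgery} with $r=2$ gives an isomorphism $\Phi_2:\hatpiorb(O_2(L))\to\hatpiorb(O_2(L'))$. Both groups lie in $\mathscr{O}$, because $L'$ is hyperbolic. The rigidity hypothesis then yields an abstract isomorphism $\piorb(O_2(L))\cong\piorb(O_2(L'))$.

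Second, I will upgrade this group isomorphism to an isomorphism of orbifolds. The key input is that $\piorb(O_2(L))$ is infinite, which I will justify by the orbifold theorem (Boileau--Leeb--Porti) and Dunbar's classification.
\begin{enumerate}[leftmargin=*]
\item Suppose $O_2(L)$ is spherical, that is, $\piorb(O_2(L))$ is finite. Then $M_2(L)$ has finite $\pi_1$, so $L$ is a two-bridge link or a Montesinos/Seifert link with finite branched cover. This is the main obstacle: I must exclude all non-two-bridge hyperbolic $L$ with spherical $O_2(L)$, using \autoref{prop: two fold Seifert} and \autoref{prop: Montesinos geometrization classification}. If exclusion fails, a fallback is to handle these cases directly: run the argument of \autoref{thm: Montesinos 1}, which gives $M_2(L)\cong M_2(L')$, and then use \autoref{prop: Montesinos characterize}. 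Here I would note that 3-tangle Montesinos links are already covered, and the remaining spherical cases are of the form $(-2,3,n)$ with $n\le 5$, which are Seifert.
\item Otherwise $\piorb(O_2(L))$ is infinite. The orbifold $O_2(L)$ is irreducible, and it is good since $M_2(L)$ is a manifold cover. The orbifold theorem then makes $O_2(L)$ geometric or decomposable.
\end{enumerate}
In the infinite case, I will invoke the rigidity theorem for good irreducible orientable 3-orbifolds: orbifolds with isomorphic orbifold groups are homeomorphic. This holds for hyperbolic orbifolds by Mostow rigidity, and for the general Haken or Seifert cases by the orbifold version of the Waldhausen/Scott theorems. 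Being \emph{not two-bridge} should be exactly what rules out the small Seifert exceptions, where lens-space-type ambiguities occur. From this I conclude $O_2(L)\cong O_2(L')$ as orbifolds, which means the pairs $(S^3,L)$ and $(S^3,L')$ are homeomorphic.

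Finally, a homeomorphism of pairs restricts to a homeomorphism $\X L\cong\X{L'}\cong N$. This proves profinite rigidity in $\mathscr{M}$.
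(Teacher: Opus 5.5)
Your first step, passing from the perfect isomorphism to an abstract isomorphism $\piorb(O_2(L))\cong\piorb(O_2(L'))$ via Proposition~\ref{prop: orbifold surgery} and the rigidity hypothesis, is exactly the paper's. Your spherical-case fallback is also legitimate. Exclusion genuinely fails: $\mathbf{M}(\frac12,\frac13,\frac15)$ is hyperbolic and not two-bridge, yet has finite $\pi$-orbifold group. But every such $L$ is a hyperbolic 3-tangle Montesinos link, so Theorem~\ref{thm: Montesinos 1} already gives the conclusion.

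The gap is in the infinite case. There you appeal to a ``rigidity theorem for good irreducible orientable 3-orbifolds'', supported only by Mostow--Prasad rigidity and unnamed orbifold versions of Waldhausen's and Scott's theorems. Mostow needs $O_2(L)$ to be hyperbolic, and Waldhausen-type results need an incompressible 2-suborbifold. Neither holds for the $(-2,3,7)$ pretzel knot $L=\mathbf{M}(-\frac12,\frac13,\frac17)$. It is hyperbolic and not two-bridge, but $O_2(L)$ is a closed Seifert fibred $\widetilde{\mathrm{SL}_2(\mathbb{R})}$-orbifold. Its double cover $M_2(L)$ is the Brieskorn sphere $\Sigma(2,3,7)$ (up to orientation), which is non-Haken, so $O_2(L)$ contains no incompressible 2-suborbifold either. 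For such small Seifert fibred orbifolds you would need a genuine orbifold analogue of Scott's theorem: the group must determine the fibration, the reflector-cornered base orbifold, and the Seifert invariants. You neither state this precisely nor prove it. Your remark that ``not two-bridge'' is what rules out the small Seifert exceptions is also misplaced. Two-bridge links have finite dihedral $\pi$-orbifold groups, so that hypothesis has no bearing on the infinite case.

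The paper closes this step in one line. Once $\piorb(O_2(L))\cong\piorb(O_2(L'))$ is known, it invokes \cite[Theorem 3.1]{BoileauZimmermann}: for a prime, unsplittable link that is not two-bridge, the $\pi$-orbifold group determines the exterior up to homeomorphism. Primeness and unsplittability are automatic for hyperbolic links. Citing that theorem makes your spherical/infinite split, and the appeal to the orbifold theorem, unnecessary.

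If you want to keep your own route, you would need two further steps, neither of which is in your sketch:
\begin{enumerate}
\item Check that the small Seifert cases also come only from hyperbolic 3-tangle Montesinos links (for $L$ or for $L'$), and send them to Theorem~\ref{thm: Montesinos 1}.
\item For the remaining hyperbolic or Haken cases, show that $O_2(L')$ is of the same type as $O_2(L)$ before applying Mostow--Prasad or an orbifold Waldhausen theorem.
\end{enumerate}
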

    \begin{proof} 
Equip $L$ with an arbitrary orientation. 
Suppose $M$ is a compact orientable  3-manifold with $\hatpi(\X L)\cong \hatpi(M)$. According to  \autoref{thm: hyperbolic link}, there is an oriented hyperbolic link $L'\subseteq S^3$ such that $M\cong \X{L'}$ and there  is a perfect isomorphism $\Phi:\hatpi(\X L)\to \hatpi(\X{L'})$. By \autoref{prop: orbifold surgery}, $\hatpiorb(O_2(L))\cong \hatpiorb(O_2(L'))$, so our assumption implies that $\piorb(O_2(L))\cong \piorb(O_2(L'))$. Since a hyperbolic link must be prime and unsplittable, and $L$ is not a two-bridge link,  by \cite[Theorem 3.1]{BoileauZimmermann}, $\X L$ and $\X {L'}$ are diffeomorphic, and so $\pi_1(M)\cong \pi_1(\X{L'})\cong \pi_1(\X{L})$. 
\end{proof}

\subsection{More on cyclic branched covers}
We may also generalize \autoref{prop: branched cover} to $r$-fold cyclic branched covers with $r\ge 3$. To avoid ambiguities in cyclic branched covers, we restrict our attention to knots in this subsection. 

Let $K$ be a knot in $S^3$. 
There is a unique $r$-fold cyclic cover of $\X{K}$, denoted by $C_r(K)$, which is given by the kernel of the unique epimorphism   $\pi_1(\X{K})\twoheadrightarrow H_1(\X{K})\twoheadrightarrow \Z/r\Z$. The $r$-fold cyclic branched cover of $S^3$ along $K$, denoted by $M_r(K)$, is obtained by Dehn filling $C_r(K)$ along the lift of the meridian of $\X{K}$. From another perspective, $M_r(K)$ is the $r$-fold orbifold cover of $O_r(K)$ corresponding to the kernel of $\piorb(O_r(K))\to \piorb(O_r(K))^{\mathrm{ab}}\cong \Z/r\Z$. 

\begin{proposition}\label{prop: r-fold cyclic}
    Suppose $K$ and $K'$ are oriented knots in $S^3$, and $\Phi: \hatpi(\X K)\to \hatpi(\X{K'})$ is a perfect isomorphism. Then, $\hatpi(M_r(K))\cong \hatpi(M_r(K'))$ for any $r\ge 2$. 
\end{proposition}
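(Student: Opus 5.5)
The plan is to mimic the proof of \autoref{prop: branched cover}, replacing the index-two subgroup by the index-$r$ subgroup coming from $H_1$. First I would align the $r$-fold cyclic covers. Let $\sigma_r:\pi_1(\X K)\to H_1(\X K)\to \Z/r\Z$ send the preferred meridian $m$ to $1$, and define $\sigma_r'$ for $K'$ in the same way. Since $\Phi$ is perfect, its profinite abelianization sends $[m]$ to $[m']$, as in the proof of \autoref{prop: perfect linking number}. Therefore $\widehat{\sigma_r'}\circ\Phi=\widehat{\sigma_r}$. It follows that $\Phi$ carries $\ker\widehat{\sigma_r}=\overline{\pi_1(C_r(K))}$ onto $\ker\widehat{\sigma_r'}=\overline{\pi_1(C_r(K'))}$. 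Restricting $\Phi$ then gives an isomorphism $\Phi^{(r)}:\hatpi(C_r(K))\to\hatpi(C_r(K'))$.

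Next I would check that $\Phi^{(r)}$ is peripheral regular. This follows as in \autoref{prop: branched cover}, with respect to the lift $h^{(r)}:\partial C_r(K)\to\partial C_r(K')$ of $h_\Phi$. The boundary of $C_r(K)$ is connected, because $\sigma_r$ restricted to the peripheral subgroup is already surjective (it hits $m$). Hence there is a single boundary torus, whose fundamental group is $\sigma_r^{-1}(0)\cap\langle m,l\rangle=\langle m^r,l\rangle$. The same holds for $K'$.

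The key point is the peripheral compatibility. For each boundary torus, $\Phi$ restricted to $\overline{\pi_1(\partial \X K)}$ agrees, up to a conjugation $C_g$, with $\widehat{(h_\Phi)_\ast}$, and $h_\Phi$ sends $m\mapsto m'$ and $l\mapsto l'$. I need the conjugating element to lie in the subgroup, or at least to be fixable. The element $g$ can be modified by an element of $\overline{\pi_1(\partial\X{K'})}$, and the image of that peripheral closure under $\widehat{\sigma_r'}$ is all of $\Z/r\Z$. So I can choose $g\in\ker\widehat{\sigma_r'}$. Then $\Phi^{(r)}$ maps $\overline{\langle m^r,l\rangle}$ to $g\overline{\langle m'^r,l'\rangle}g^{-1}$ via $m^r\mapsto m'^r$ and $l\mapsto l'$, and this is induced by the lifted homeomorphism $h^{(r)}$. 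In particular $h^{(r)}(rm)=rm'$, where $rm$ denotes the lift of the meridian, the Dehn filling slope.

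Finally, I would apply \autoref{thm: dehn filling} to $\Phi^{(r)}$ with the filling slope $rm$. This gives $\hatpi(M_r(K))=\hatpi\big(C_r(K)_{rm}\big)\cong\hatpi\big(C_r(K')_{h^{(r)}(rm)}\big)=\hatpi\big(C_r(K')_{rm'}\big)=\hatpi(M_r(K'))$.

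The step I expect to be the main obstacle is making the conjugating element $g$ lie in the finite-index subgroup, so that the restriction is genuinely peripheral regular. The argument above handles it by absorbing an element of the peripheral closure, which uses surjectivity of the peripheral subgroup onto $\Z/r\Z$. Everything else is formal.

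An alternative route would pass through \autoref{prop: orbifold surgery}. The map $\Phi_r$ descends to the abelianizations $\Z/r\Z$ with $[m]\mapsto[m']$, so it preserves the kernels, which are the profinite completions of $\pi_1(M_r(K))$ and $\pi_1(M_r(K'))$. This uses residual finiteness of the orbifold groups and the fact that an open subgroup of $\hatpiorb$ is the closure of the corresponding finite-index subgroup. I would present this route as a cross-check.
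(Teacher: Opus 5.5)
Your argument is correct, but your main route differs from the paper's; the paper's proof is essentially your ``cross-check''.

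\textbf{The paper's route.} It applies \autoref{prop: orbifold surgery} to get $\Phi_r:\hatpiorb(O_r(K))\to\hatpiorb(O_r(K'))$. That proposition is the orbifold version of Dehn filling alignment, resting on the extension of \autoref{thm: dehn filling} to non-primitive slopes in \cite[Remark 3.6]{Xu24}. The paper then restricts $\Phi_r$ to the kernels of the profinite abelianizations onto $\Z/r\Z$, which are the closures of $\pi_1(M_r(K))$ and $\pi_1(M_r(K'))$.

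\textbf{Your route.} You lift first and fill second. You restrict $\Phi$ to the $r$-fold cyclic covers, check that the restriction is peripheral regular with respect to the lifted boundary homeomorphism, and apply \autoref{thm: dehn filling} exactly as stated, along the primitive slope $rm$. This is the direct generalization of \autoref{prop: branched cover} from $r=2$ to all $r$. It avoids the orbifold version of the filling theorem, and it produces the isomorphism together with a compatible peripheral regular isomorphism at the level of $C_r$.

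The cost is the peripheral bookkeeping, and you handle it correctly. Because $\overline{\pi_1(\partial\X{K'})}$ is abelian, premultiplying the conjugator by one of its elements does not change the peripheral identification. Because that subgroup surjects onto $\Z/r\Z$, you can push the conjugator into $\ker\widehat{\sigma_r'}$. The paper passes over this point silently in the proof of \autoref{prop: branched cover}.

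\textbf{Comparison.} The paper's route is shorter once \autoref{prop: orbifold surgery} is available. It also needs less than your cross-check invokes. The kernel of the profinite abelianization is the closed commutator subgroup, hence characteristic, so any isomorphism preserves it and you need not track $[m]\mapsto[m']$. And the closure of a finite-index subgroup is always its profinite completion, so residual finiteness of the orbifold groups plays no role.
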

\begin{proof}
    \autoref{prop: orbifold surgery} gives an isomorphism $\Phi_r:\hatpiorb(O_r(K))\to\hatpiorb( O_r(K'))$. Taking profinite abelianizations gives the following commutative diagram.
    \begin{equation*}
    \begin{tikzcd}
                \hatpiorb(O_r(K)) \arrow[r,"\mathrm{Ab}"] \arrow[d,"\Phi_r"] & \Z/r\Z \arrow[d,"\cong"] \\ 
                \hatpiorb(O_2(L')) \arrow[r,"\mathrm{Ab}"] & \Z/r\Z
    \end{tikzcd}
    \end{equation*}
    The kernels of the abelianizations are exactly the closures of $\pi_1(M_r(K))$ and $\pi_1(M_r(K'))$, which are isomorphic to $\hatpi(M_r(K))$ and $\hatpi(M_r(K'))$. Thus, restricting $\Phi_r$ to the kernels gives an isomorphism $\hatpi(M_r(K))\cong \hatpi(M_r(K'))$. 
\end{proof}

\begin{theorem}\label{cor:branchedcovers}
 Let  $K$ be  a hyperbolic knot in $S^3$. Suppose that for infinitely many integers $r\ge 2$,  $\pi_1(M_r(K))$ is profinitely rigid in $\mathscr{M}$. Then, $\pi_1(\X{K})$ is profinitely rigid in $\mathscr{M}$. 
 \end{theorem}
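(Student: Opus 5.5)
Given a compact orientable 3-manifold $N$ with $\hatpi(N)\cong\hatpi(\X K)$, I will show $N\cong\X K$. Orient $K$ arbitrarily. By \autoref{thm: hyperbolic link} (with $n=1$), there is an oriented hyperbolic knot $K'\subseteq S^3$ with $N\cong\X{K'}$, and the isomorphism becomes a perfect isomorphism $\Phi:\hatpi(\X K)\to\hatpi(\X{K'})$. Then \autoref{prop: r-fold cyclic} gives $\hatpi(M_r(K))\cong\hatpi(M_r(K'))$ for every $r\ge 2$. For each of the infinitely many $r$ in the hypothesis, profinite rigidity of $\pi_1(M_r(K))$ in $\mathscr{M}$ then yields an abstract isomorphism $\pi_1(M_r(K))\cong\pi_1(M_r(K'))$.

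The next step is to pass from isomorphic groups to homeomorphic branched covers, and then back to the knots. By Thurston's orbifold theorem, for all but finitely many $r$ the orbifold $O_r(K)$ is hyperbolic, so $M_r(K)$ is a closed hyperbolic manifold. Hence, for infinitely many $r$ in the hypothesis, $M_r(K)$ is closed hyperbolic. For such $r$, the group $\pi_1(M_r(K'))$ is then the fundamental group of a closed hyperbolic 3-manifold. Since $M_r(K')$ is closed, orientable and irreducible (its group is infinite and not a free product), geometrization and Mostow rigidity give $M_r(K)\cong M_r(K')$.

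To recover $K'$ from $K$, I invoke the result of Kojima ("Determining knots by branched covers"): for a prime knot $K$, there is a bound $r_0(K)$ such that if $r>r_0$ and $M_r(K)\cong M_r(K')$, then $K'$ is equivalent to $K$ up to mirror image. Hyperbolic knots are prime, and the hypothesis supplies arbitrarily large $r$, so we may choose $r>r_0$ with $M_r(K)$ hyperbolic. Then $K'$ is isotopic to $K$ or to its mirror image, so $N\cong\X{K'}\cong\X K$.

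The main obstacle is this last step, importing Kojima's theorem and checking that its hypotheses are satisfied; the rest is formal. The argument may need to avoid depending on the symmetry group of $K$. If Kojima's result is unavailable in the needed form, I would argue directly. For large $r$, the covering involution group $\Z/r$ acting on the hyperbolic manifold $M_r(K)\cong M_r(K')$ is, by Mostow rigidity, conjugate to a subgroup of the finite isometry group. For $r$ larger than the order of $\mathrm{Isom}(M_r(K))$'s relevant cyclic subgroups, which is uniformly controlled as $O_r(K)$ converges to $\X K$, the two $\Z/r$ actions coincide up to conjugacy. Their quotient orbifolds $O_r(K)\cong O_r(K')$ are then homeomorphic, so the pairs $(S^3,K)$ and $(S^3,K')$ are homeomorphic.
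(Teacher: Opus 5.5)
Your argument is correct and follows the paper's proof essentially step for step: \autoref{thm: hyperbolic link} produces $K'$ and a perfect isomorphism, \autoref{prop: r-fold cyclic} gives $\hatpi(M_r(K))\cong\hatpi(M_r(K'))$, and hyperbolicity of $M_r(K)$ for large $r$ (the paper cites Kojima's Lemma 4, you cite Thurston's orbifold Dehn surgery, which is the same fact) plus Mostow rigidity gives $M_r(K)\cong M_r(K')$, after which Kojima's theorem finishes. The fallback sketch in your last paragraph is unnecessary and too vague to stand on its own as a proof, but the main line does not depend on it.
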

 
    \begin{proof}
Equip $K$ with an arbitrary orientation. 
   Suppose $M$ is a compact orientable  3-manifold with $\hatpi(\X K)\cong \hatpi(M)$. According to  \autoref{thm: hyperbolic link}, there is an oriented hyperbolic knot $K'\subseteq S^3$ such that $M\cong \X{K'}$ and there exists a perfect isomorphism $\Phi:\hatpi(\X K)\to \hatpi(\X{K'})$. According to  \autoref{prop: r-fold cyclic}, $\hatpi(M_r(K))\cong \hatpi(M_r(K'))$ for all $r\ge 2$. The assumption then implies that $\pi_1(M_r(K))\cong \pi_1(M_r(K'))$ for infinitely many $r\ge 2$. 

   According to \cite[Lemma 4]{Kojima}, $M_r(K)$ is a hyperbolic manifold for $r$ sufficiently large. Thus, Mostow's rigidity theorem implies that $M_r(K)\cong M_r(K')$ for infinitely many $r$. Finally, the main theorem of \cite{Kojima} implies that $K$ and $K'$ are equivalent up to isotopy and mirroring. Thus, $\pi_1(M)\cong \pi_1(\X{K'})\cong \pi_1(\X{K})$. 
    \end{proof}

\bibliographystyle{amsalpha}
\bibliography{main.bib}
\end{document}